\newcommand{\of}{\circ}
\renewcommand{\to}{\rightarrow}
\newcommand{\Z}{\mathbb{Z}}
\newcommand{\C}{\mathbb{C}}
\newcommand{\R}{\mathbb{R}}
\newcommand{\del}{\partial}
\newcommand{\D}{\mathbb{D}}
\newcommand{\delbar}{\overline{\partial}}
\newcommand{\Hom}{\text{Hom}}
\newcommand{\sub}{\subset}
\newcommand{\inv}{^{-1}}
\newcommand{\Maps}{\text{Maps}}
\newcommand{\interval}{[0,1]}
\newcommand{\wt}{\widetilde}
\renewcommand{\to}{\rightarrow}
\newcommand{\inn}{{in}}
\newcommand{\out}{{out}}
\newcommand{\outp}{{out+}}
\newcommand{\outm}{{out-}}
\newcommand{\ol}{\overline{}}
\newcommand{\M}{\mathcal{M}}
\newcommand{\J}{\mathcal{J}}
\DeclareMathOperator{\Ind}{Ind}
\DeclareMathOperator{\vir}{vir}
\DeclareMathOperator{\aut}{aut}
\DeclareMathOperator{\im}{im}
\newcommand{\bC}{\mathbb{C}}
\newcommand{\bD}{\mathbb{D}}
\newcommand{\bH}{\mathbb{H}}
\newcommand{\bL}{\mathbb{L}}
\newcommand{\bM}{\mathbb{M}}
\newcommand{\bN}{\mathbb{N}}
\newcommand{\bP}{\mathbb{P}}
\newcommand{\bR}{\mathbb{R}}
\newcommand{\bZ}{\mathbb{Z}}
\newcommand{\cA}{\mathcal{A}}
\newcommand{\cB}{\mathcal{B}}
\newcommand{\cE}{\mathcal{E}}
\newcommand{\cF}{\mathcal{F}}
\newcommand{\cI}{\mathcal{I}}
\newcommand{\cJ}{\mathcal{J}}
\newcommand{\cL}{\mathcal{L}}
\newcommand{\cM}{\mathcal{M}}
\newcommand{\cN}{\mathcal{N}}
\newcommand{\cO}{\mathcal{O}}
\newcommand{\cP}{\mathcal{P}}
\newcommand{\cR}{\mathcal{R}}
\newcommand{\cU}{\mathcal{U}}
\newcommand{\cW}{\mathcal{W}}
\newcommand{\sm}{\ensuremath{\setminus}}
\newcommand{\s}[2]{\sum\limits_{#1}{#2} }
\newcommand{\fu}{\mathfrak{u}}
\renewcommand{\th}{^{th}}
\renewcommand{\to}{\rightarrow}
\newcommand{\dd}[1]{\frac{\partial}{\partial #1}}
\DeclareMathOperator{\newt}{Newt}
\newcommand{\abc}{{(a,b,c)}}
\newenvironment*{prooflemma*}{\paragraph{Proof:}}{}
\newtheorem{Theorem}{Theorem}[section]
\newtheorem{thm*}{Main Theorem}
\newtheorem*{prop*}{Proposition}
\newtheorem{lemma}[Theorem]{Lemma}
\newtheorem{prop}[Theorem]{Proposition}
\theoremstyle{definition}
\newtheorem{assum}[Theorem]{Assumption}
\newtheorem{slogan}[Theorem]{Slogan}
\newenvironment{fslogan}
  {\begin{mdframed}\begin{slogan}}
  {\end{slogan}\end{mdframed}}
\newenvironment{fthm}
  {\begin{mdframed}\begin{thm*}}
  {\end{thm*}\end{mdframed}}
\newtheorem{conv}[Theorem]{Convention}
\newcommand{\Addresses}{{
  \bigskip
  \footnotesize

  \textsc{Soham Chanda, University of Southern California}\\
 \indent\textit{E-mail address}: \texttt{sohamcha@usc.edu}\par
}}
\theoremstyle{definition}
\newtheorem{remark}[Theorem]{Remark}
\newtheorem{defn}[Theorem]{Definition}
\newtheorem{example}[Theorem]{Example}
\numberwithin{equation}{subsection}
\title {Bohr-Sommerfeld type surgeries and Disk Potential}
\author{Soham Chanda}
\begin{document}

\title[Bohr-Sommerfeld profile surgeries and Disk Potentials] { Bohr-Sommerfeld profile surgeries and Disk Potentials \\
 \vspace{0.5cm}  \bf{\footnotesize{--A recipe to create new Lagrangians--}}}

\maketitle
\begin{abstract}
We construct  a new surgery type operation for Lagrangian submanifolds by switching between two exact fillings of Legendrians which we call a BSP surgery. In certain cases, this surgery can preserve monotonicity of Lagrangians. We prove a wall-crossing type formula for the change of the disk-potential under surgery with Bohr-Sommerfeld profiles.  As an application, we show that Biran's circle-bundle lifts admit a Bohr-Sommerfeld type surgery.  We use the wall-crossing theorem about disk-potentials to construct exotic monotone Lagrangian tori in $\bP^n$. Moreover, we use Bohr-Sommerfeld surgeries to generalize the notion of Lagrangian disk surgeries.
\end{abstract}
\setcounter{tocdepth}{2}
\tableofcontents
\setlength{\parskip}{0.5em}

\pagebreak
\section{Introduction}

Surgeries provide a natural way to construct new manifolds from old ones. In the symplectic world, this idea was introduced in the pioneering paper of Polterovich \cite{pol:surg}. Subsequently, numerous Lagrangian surgery operations were developed using Polterovich surgery as the main framework, see e.g. \cite{yau17,haug,makwu}. Recently, a new surgery operation, local higher mutation, was introduced in \cite{Cha23} that resolved certain conical singularities modeled on the Clifford torus. The construction of local higher mutation was inspired by higher mutations of Lagrangian tori introduced in \cite{PT20}. 

A connecting thread among these surgery operations is that all of them switch between two  fillings of certain Legendrians. Given a symplectic manifold $(M,\omega)$  and a separating contact-type hypersurface $Z$, we can split   $M$  along $Z$  by viewing $M\sm Z$  as $ M\sm Z = M^+ \sqcup M^- ,$ where $M^+$ is a capping  and $M^-$ is a filling of the contact manifold $Z$.  If $L$ is a Lagrangian in $M$ such that it intersects the contact-type hypersurface $Z$ at a Legendrian $\Lambda$, then $Z$ splits $L$ similarly,
$ L \sm Z = L^+ \sqcup L^-$
where $L^+$ is a Lagrangian capping and $L^- $ is a Lagrangian filling of $\Lambda$. We can construct a new Lagrangian $\wt L$ by replacing the filling $L^-$ with a different filling $\wt L^-$ and gluing along the Legendrian $\Lambda,$
$$\wt L = (L^+ \cup \wt L^- )/ \sim_{glue} .$$

Although this idea is simple to state, it is quite challenging to understand how such surgeries affect Floer theoretic invariants. The Floer theoretic aspects of such surgeries have been studied  in certain special cases, such as for  Polterovich surgery  \cite{ch10,PW19,seidelFuk}, higher mutation  \cite{PT20,Cha23}.

In this article, we construct a large class of surgery type operations which generalize  Polterovich-surgery and local higher mutations and study the effect of such surgeries on disk-potentials. More precisely, for every Bohr-Sommerfeld Legendrian $\Lambda,$ as introduced in \cite{rg19}, we construct a surgery operation $BSP^\Lambda$.
The new surgery operation switches between exact fillings of a Legendrian $\Lambda_2$ \footnote{The subscript $2$ denotes that there are two connected components in this Legendrian as we will see in \S \ref{bohrsomhandle} } that is obtained from `doubling' a Bohr-Sommerfeld Legendrian $\Lambda$. We prove that such surgery operation preserves monotonicity of Lagrangians under certain assumptions. We call such a surgery operation \textit{ Bohr-Sommerfeld surgery with a $\Lambda$-profile}  and denote the surgered Lagrangian as $BSP^\Lambda(L)$. In the article, we will drop ${\Lambda}$ from the notation when the Legendrian is clear from context.

The main Floer-theoretic result in this paper is a wall-crossing style formula. This can be viewed as a generalization of the mutation formula for Lagrangian disk-surgery and higher mutations,  \cite{Aur07,PT20, PW19}. We will drop some hypotheses for the sake of clarity of exposition; the curious reader should refer to Theorem \ref{mainthm} for  details of the theorem.
 
\vspace{5pt}\vspace{5pt}
\begin{fthm}[Theorem \ref{mainthm}]\label{mainthmintro}
Under nice conditions, a Bohr-Sommerfeld profile surgery of $L$ with a $\Lambda$-profile results in the following change of disk potential
    \begin{equation*}\label{mainform}
        W_{BSP^\Lambda(L)}(x_1,\dots,x_k,z,w_1,\dots,w_l)
 =W_L(x_1,\dots,x_k,zW_{\Lambda}(x_1,\dots,x_k),w_1,\dots,w_l).     \end{equation*}
where $W_\Lambda$ is the augmentation polynomial of $\Lambda$.

\end{fthm}

\begin{remark}{(Lagrangian Mutations as a special case of BSP surgery)} By specializing to BSP surgery with a profile modeled on the real Legendrian circle $S^1_{\bR} = S^3 \cap \bR^2$ in the standard contact sphere $S^3 \hookrightarrow \bC^2$, we can recover the classical wall-crossing formula \cite{Aur07,PW19,ch10} for Lagrangian disk surgeries (Example \ref{ex:disksurg}). By specializing to the BSP surgery, corresponding to the Legendrian Clifford torus, we obtain the change of disk potential under a local higher mutation (Example \ref{ex:highermut}).
\end{remark}

\begin{remark}
    We expect that the change of potential formula in Theorem \ref{mainthm} can be upgraded to an invariance result in the Donaldson-Fukaya category. In particular, under a birational change of local systems, $\rho \mapsto \rho_\Lambda,$ we expect $(L,\rho)$ and $(BSP^\Lambda(L),\rho)$ to be quasi-isomorphic to each other. The proof strategy presented here using SFT breaking should work, but we do not pursue this direction in this article.
\end{remark}

\subsection{Applications}

We explore a few applications of BSP surgery before proceeding to the construction. Building on Biran's circle-bundle construction \cite{Biran_lagrangina_barrier,b06,bc09}, we show how Biran lifts can be Hamiltonian isotoped to include BSP handles, allowing for monotone BSP surgeries and thus producing exotic circle bundle lifts. We also show that BSP surgery provides a geometric realization of mutation of potentials as studied in \cite{21coates,cruzgalkin,minksum}. Furthermore, we apply these ideas to construct exotic monotone Lagrangian tori in $\bC P^n$.

\subsubsection{Lagrangian Cone Surgeries}

For Lagrangian surfaces that have a Lagrangian disk cleanly intersecting in a circle, one can perform a Lagrangian disk surgery as introduced by \cite{yau17} to get a new Lagrangians. This procedure can be thought of as performing an anti-surgery using the Lagrangian disk to create a doubly-immersed-singular point and then resolving it by doing a Polterovich surgery in the `opposite' direction. Such surgeries have been widely used, especially in studying exact Lagrangian fillings \cite{stw1,stw2,cginfty,cwmicro}.

We define the notion of \textit{ Lagrangian cone surgery}, a generalization of disk-surgery : Given a Bohr-Sommerfeld lift $\Lambda$ in the standard contact sphere $S^{2n+1}$, there is a natural Lagrangian cone $C(\Lambda)$ supported on $\Lambda$. We define a Lagrangian cone modeled on a Bohr-Sommerfeld Legendrian $\Lambda$ to be a topological embedding of 
 the cone $C(\Lambda) \cong \Lambda \times [0,1] / ( \Lambda \times \{ 0\})$ such that :
 \begin{enumerate}[({C}.1)]
     \item  away from the conical-singularity point, the embedding is a smooth Lagrangian embedding,
     \item in a Darboux chart centered at the conical point, the embedding looks like $C(\Lambda)$.
\end{enumerate}
In \S \ref{sub:conicalsurg} we generalize Lagrangian disk surgeries by using BSP surgeries with profiles coming from Legendrians in the standard contact sphere. In particular, we show the following result that proves the existence of Lagrangian cones satisfying properties C.1 and C.2, ensures that a Bohr-Sommerfeld surgery can be performed,

\begin{prop*}
    Let $L$ be a monotone Lagrangian and $C$ be a Lagrangian cone modeled on a Bohr-Sommerfeld Legendrian $\Lambda$ in the standard contact sphere $S^{2n+1}$. If $C$ intersects $L$ only at its boundary $\partial C$, and the intersection is clean, then $L$ admits a Bohr-Sommerfeld-Profile surgery modeled on $\Lambda$ such that the surgered manifold $BSP^{\Lambda}(L)$ is also monotone. 
\end{prop*}

\subsubsection{Exotic Lagrangian Circle-Bundles}\label{sub:exoticlift}

Biran's circle-bundle construction (\cite[\S 4]{b06}) provides a way to lift Lagrangians from symplectic divisors in symplectic manifolds. Later, Biran-Cornea \cite{bc09} proved that there are monotone circle-bundle lifts $\wt L \hookrightarrow M$ from monotone Lagrangians $L$ in symplectic divisors $D \hookrightarrow M$. The relation between the disk-potential of $\wt L$ and that of $L$ has been studied in the upcoming work \cite{DTVW} using \cite{bk13}.

We show the following result about constructing \textit{exotic lifts} in Proposition \ref{prop:biranbundlehandle},

\begin{prop*}
    Let $D\hookrightarrow M$ be a symplectic divisor such that the unit circle bundle $Z$ of the normal bundle $N_D$ is a pre-quantum bundle over $D$. Then, Biran's circle-bundle lift of any Lagrangian in $D$ can be Hamiltonian-isotoped to have a BSP handle and thus admits a BSP surgery. Further, we can stay in the monotone setting by starting from a monotone Lagrangian.
\end{prop*}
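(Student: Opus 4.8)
The plan is to build the BSP handle directly inside Biran's neighbourhood of $D$, exploiting that the pre-quantum hypothesis is precisely what endows $Z$ with the Bohr--Sommerfeld structure required by the construction of \S\ref{bohrsomhandle}. First I would recall Biran's picture \cite{b06,bc09}: the pre-quantum condition identifies a neighbourhood of $D$ in $M$ with a disk sub-bundle of $N_D$ carrying $\omega=\pi^*\omega_D+d(\tfrac{1}{2}\rho^2\alpha)$, where $\alpha$ is a connection $1$-form with $d\alpha=-\pi^*\omega_D$; the circle bundle $Z=\{\rho=1\}$ is then a contact-type hypersurface whose Reeb flow is the fibrewise $S^1$-action, i.e.\ a pre-quantum circle bundle in the sense of \cite{rg19}. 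For a Lagrangian $\ell\subset D$, Biran's lift is $\wt\ell=\pi^{-1}(\ell)\cap\{\rho=\rho_0\}$, and by Biran--Cornea it is monotone once $\rho_0$ lies in the appropriate range and $\ell$ is monotone.

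Next I would localize. Pick a small Darboux ball $B\subset D$ meeting $\ell$ so that $\ell\cap B$ is the standard linear Lagrangian, and trivialize $Z$ over $B$ as $B\times S^1$ with $\alpha=d\theta-\lambda$, $d\lambda=\omega_D|_B$, $\lambda|_{\ell\cap B}=0$; this exhibits $(Z|_B,\ker\alpha)$ as a piece of the standard contact sphere (the pre-quantum bundle over a symplectic ball in $\bP^{\dim_\C D}$) and $M$ over $\pi^{-1}(B)$ as a standard piece of its symplectization. In this chart $\wt\ell\cap\pi^{-1}(B)$ is the ``flat'' circle bundle $(\ell\cap B)\times S^1\times\{\rho_0\}$, and the horizontal lifts $\hat\lambda_\theta$ of any Bohr--Sommerfeld Lagrangian $\lambda\subset B$ (in the trivial case $\lambda=\ell\cap B$, since it is contractible) are Legendrian leaves of the pre-Lagrangian $\pi^{-1}(\lambda)$. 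Choosing the profile $\Lambda=\hat\lambda_0$ and its Reeb push-off, the two-component Legendrian $\Lambda_2=\hat\lambda_0\sqcup\hat\lambda_{\theta_0}$ sits inside $Z|_B$ near $\wt\ell$, and the part of $\wt\ell$ lying over a collar of $\lambda$ is an exact filling of $\Lambda_2$.

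The core step is to produce a compactly supported Hamiltonian isotopy inside $\pi^{-1}(B)$ carrying $\wt\ell$ to a Lagrangian which, near $\Lambda_2$, agrees on the nose with the standard BSP handle model of \S\ref{bohrsomhandle}, i.e.\ replaces the flat filling region of $\wt\ell$ by the model filling, so that the result genuinely carries a BSP handle with $\Lambda$-profile. This is possible because in the standard chart both the flat filling and the model filling are exact Lagrangian fillings of $\Lambda_2$ in a ball with matching $1$-jet along $\Lambda_2$, hence Hamiltonian isotopic rel boundary; the primitives match precisely because $Z$ is pre-quantum, the local primitive $\lambda$ vanishing on the horizontal lifts. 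Once the handle is in standard form, the hypotheses of Theorem \ref{mainthm} for $\wt\ell$ with this handle are exactly those verified by the model computation of \S\ref{bohrsomhandle}, and the resulting change-of-potential formula applies.

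Finally, for monotonicity: the Hamiltonian isotopy above preserves the monotonicity constant, Biran--Cornea makes $\wt\ell$ monotone to begin with, and one must check the handle can be inserted while keeping that constant fixed --- an area-matching constraint relating $\theta_0$, $\rho_0$ and the size of $B$, solvable by shrinking the handle, which is precisely the monotone situation assumed in the construction. I expect the main obstacle to be exactly the Hamiltonian-versus-Lagrangian point in the core step: one must ensure grafting in the model filling creates no flux, which is where the pre-quantum hypothesis is genuinely needed rather than merely the existence of a contact-type $Z$; a secondary technical point is keeping the whole construction inside the monotone window for $\rho_0$.
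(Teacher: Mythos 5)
Your approach has a genuine gap at its foundation: you build the handle locally, over a Darboux ball $B\subset D$, with profile a horizontal lift of the contractible piece $\ell\cap B$. But a BSP handle in the sense of Definition \ref{bsphandle} requires a symplectic embedding of a whole truncated symplectization $[a,b]\times Z$ for a prequantization bundle $(Z,\lambda)\to (Y,\omega_Y)$ over a \emph{closed} monotone base, with profile the canonical Bohr--Sommerfeld lift $\Lambda$ of a \emph{closed} monotone Lagrangian $L_Y$ satisfying Assumption \ref{as1}; the surgery of Definition \ref{def:bspsurg} then excises the entire filling $\cL_\gamma$, which fibers over all of $L_Y$. A handle whose profile is a Legendrian disk over $\ell\cap B$ is not an object of this kind, the associated ``surgery'' is not defined, and the augmentation polynomial $W_\Lambda$ entering Theorem \ref{mainthm} (a lift of the disk potential of $L_Y$) would be vacuous for a contractible profile. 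The point of the proposition is that the handle is modelled over the \emph{whole} base Lagrangian $L_\Sigma$, with $\Lambda$ its Bohr--Sommerfeld lift in the unit normal circle bundle; the paper's proof is accordingly fiberwise rather than local in the base: the restriction of Biran's disk bundle $E_\Sigma$ to $L_\Sigma$ is flat with holonomy the $\bZ_k$-action by $k$-th roots of unity, the Biran lift is the parallel transport of a round circle in one fiber, and one performs an exact isotopy of that circle \emph{through $\bZ_k$-equivariant embedded curves} so that part of the resulting curve is $\gamma^{1/k}$ for an admissible $\gamma$; equivariance makes the parallel transport over all of $L_\Sigma$ well defined, and exactness lets the fiberwise isotopy extend to a Hamiltonian isotopy of $L_M$ in $M$ (via the relative Hamiltonian isotopy extension lemma), which also preserves monotonicity when one starts from Biran--Cornea's monotone lift.

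A second, independent problem is your core step: ``both fillings are exact with matching $1$-jet along $\Lambda_2$, hence Hamiltonian isotopic rel boundary.'' That implication is false in general --- distinct exact fillings of a Legendrian with the same boundary data need not be Hamiltonian isotopic rel boundary; indeed, if it were true, the BSP surgery itself (which switches between the two conjugate fillings $\cL_\gamma$ and $\cL_{\wt\gamma}$) could never change the Hamiltonian isotopy class, contradicting the whole purpose of the construction. What is actually available is an isotopy \emph{within one homotopy class of admissible curves}, justified by the flux criterion ($\int_{\gamma_\tau}\lambda_k$ constant) from the paper's proposition on relative Hamiltonian isotopies of $\cL_{\gamma_\tau}$, not by a general rel-boundary uniqueness of exact fillings. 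So the grafting step needs to be replaced by the explicit equivariant, zero-flux deformation of the fiber circle described above.
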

\noindent Thus, we can apply Theorem \ref{mainthmintro} to observe the following relationship between the disk potentials, 
\begin{equation} \label{eq:lift}
W_{BSP^{\Lambda}(\wt L)} (\cdots) = W_{\wt L} (\cdots, z W_\Lambda, \cdots),
\end{equation}
where $z$ is defined as the monomial corresponding to a cycle that passes through the BSP handle once. 

\begin{remark}
    $W_\Lambda$ is defined as a lift of the disk potential $W_L$ under the natural pullback map induced from $\Lambda \to L$. Just from knowing that disk-potentials are Laurent polynomials,  Equation \ref{eq:lift} indicates that the disk potential of the circle bundle lift ``sees'' that of the base. In particular, the potential of the circle bundle lift admits a change of variables such that swapping $z$ with $zW_\Lambda$ keeps the resulting rational map a Laurent polynomial. This, of course, is evident from the disk potential computation of the Biran circle-bundle Lagrangian in \cite{bk13,DTVW}.
\end{remark}

Recall that Vianna \cite{Via16,Via17} proved that there are infinitely many monotone Lagrangian tori in $\bC P^2$. In fact, Vianna proved that for every Markov triple\footnote{ A Markov triple $(a,b,c)$ is a positive integer solution to the Markov equation $a^2 + b^2 + c^2  = 3abc$.}  $(a,b,c)$, there is a monotone Lagrangian torus $T_\abc$.  Further, Vianna showed there are no symplectomorphisms of $\bC P^2$ which map the torus $T_\abc$ to the torus $T_{(a',b',c')}$ corresponding to different Markov triple $(a',b',c')$ .  Later in \cite{DTVW,chw} it was proved that the Biran lifts of the Vianna tori remain in distinct symplectomorphism classes. We will show that performing BSP surgery on the (iterated) Biran lifts of the Vianna tori will produce exotic monotone tori which have not yet appeared in the literature to the best of our knowledge.

\subsubsection{Geometric realization of mutations of polynomials}\label{subsec:geomrel}

Mutations of Laurent polynomial have turned out to be quite essential in mirror symmetry,  e.g. \cite{cruzgalkin,21coates,minksum}. The effect of BSP surgery on the disk-potential can be phrased in terms of such mutations, in particular BSP surgery on a Lagrangian corresponds  to performing certain combinatorial mutations of a Laurent polynomial.   

We recall the notion of combinatorial mutation of Laurent polynomials as studied in \cite{21coates}.
Given $n \geq 2$ let $\cL:= \bC[x_1^\pm,\dots,x_n^\pm]$ be the ring of Laurent polynomials in $n$ variables. The set of monomials in $\cL$ has an identification with $\bZ^n$ in an obvious way. The \emph{Newton polytope} of a Laurent polynomial $$f = \s{k\in \bZ^n}{a_kx_1^{k_1}\cdots x_n^{k_n}}\in \cL$$  is the closed convex hull
$$\newt(f) := \normalfont\text{conv}(\{k \in \bZ^n : a_k \neq 0\}).$$
This association is equivariant with respect to the $\normalfont\text{GL}(n,\bZ)$-action on $\cL$, defined in \cite[Remark 4.2]{PT20}, and the standard action on $\bR^n$. 

\noindent A Newton polytope  is defined to be a \textit{Fano polytope} ( \cite{minksum} ), if
\begin{enumerate}
    \item the polytope is convex;
    \item it contains $0$ in its interior;
    \item its vertices are primitive in $\bZ^n$.
\end{enumerate}

The change of disk-potential in Theorem   amounts to a combinatorial mutation with a factor corresponding to $W_\Lambda$ in the language of \cite{minksum}. Given a primitive integral vector $w$, and a polytope $P$, we define the max and min height with respect to $ w$, $$h_{max} = max\{ w(v) \mid v\in P  \},\; \; h_{min} = min\{ w(v) \mid v\in P  \} .$$ 
In the equations above, $w(v)$ should be interpreted as $\langle w,v \rangle$ where  $\langle \;,\; \rangle$ is the usual inner-product on the Euclidean space.  The $w$-\textit{width} of the polytope $P$ is defined to  $$width_w(P ) = h_{max}-h_{min}.$$

\noindent We define slices $P_i$ for integers $i$ as $$P_i = \text{conv} \{ a\in P | w(a) = i \}.$$ Note that $P_i = \emptyset$ for $i< h_{min}$ and $i>h_{max}$. Assume that there is a  convex lattice polytope $F \sub w^\perp$ \footnote{ $w^\perp$ is the subspace $\{ p | w(p) = 0 \}$.}, such that for every height $h_{min} \leq h < 0$ there exists a (possibly empty) lattice polytope $G_h$ satisfying  $$\{ v \in \text{vert}(P) | w(v) = i \} \subset G_h + |h| F \subset P_h.$$
where  $+$ denotes Minkowski sum. We call such an $F$, a factor for $P$ with respect to $w$. We define the combinatorial mutation given by width vector $w$, factor $F$, and polytopes {$G_h$} to be the convex lattice polytope

$$\text{mut}_{w}(P,F) := \text{conv}\left(  \bigcup_{h=h_{min}}^{-1}G_h \cup \bigcup_{h=0}^{h_{max}} \left(P_h + hF\right)   \right).$$
\noindent Although mutation uses the existence of polytopes $G_h$,  Proposition 1 \cite{minksum} proves that the mutation is independent of the choice of $G_h$.  
The effect of the mutation formula in Theorem \ref{mainthm} can be studied clearly when we write the disk potential $W_L$ of a monotone Lagrangian   as a polynomial $$W_L(x_1,\dots, z , y_1 \dots) = \sum_{i=k}^l z^iC_i,$$
where $k<0,$ and  $l>0$, $C_i$ are Laurent polynomials with variable $x_i,y_j$.
Particularly in the case of Biran circle bundle lifts, the Newton polytope changes by a combinatorial mutation with a factor corresponding to a codimension one face. From Theorem \ref{mainthmintro} we can see that $W_\Lambda^{-h}$ divides $C_h$ for all $k \leq h < 0$.

We have the following relationship between Newton polytopes under a Bohr-Sommerfeld-Profile surgery, $$\newt(W_{BSP^{\Lambda}(L)}) = \text{mut}_{\hat z}(\newt(W_L),\newt(W_\Lambda)),$$
 where $\hat z$ is the dual of the $z$ variable. Thus,  we see that the change of disk-potential in Theorem \ref{mainthmintro}  amounts to a combinatorial mutation with a factor corresponding to $W_\Lambda$.

\begin{figure}[ht]
    \makebox[\textwidth][c]{
   
        \def\svgscale{1}
\begingroup%
  \makeatletter%
  \providecommand\color[2][]{%
    \errmessage{(Inkscape) Color is used for the text in Inkscape, but the package 'color.sty' is not loaded}%
    \renewcommand\color[2][]{}%
  }%
  \providecommand\transparent[1]{%
    \errmessage{(Inkscape) Transparency is used (non-zero) for the text in Inkscape, but the package 'transparent.sty' is not loaded}%
    \renewcommand\transparent[1]{}%
  }%
  \providecommand\rotatebox[2]{#2}%
  \newcommand*\fsize{\dimexpr\f@size pt\relax}%
  \newcommand*\lineheight[1]{\fontsize{\fsize}{#1\fsize}\selectfont}%
  \ifx\svgwidth\undefined%
    \setlength{\unitlength}{450bp}%
    \ifx\svgscale\undefined%
      \relax%
    \else%
      \setlength{\unitlength}{\unitlength * \real{\svgscale}}%
    \fi%
  \else%
    \setlength{\unitlength}{\svgwidth}%
  \fi%
  \global\let\svgwidth\undefined%
  \global\let\svgscale\undefined%
  \makeatother%
  \begin{picture}(1,0.5)%
    \lineheight{1}%
    \setlength\tabcolsep{0pt}%
    \put(0,0){\includegraphics[width=\unitlength,page=1]{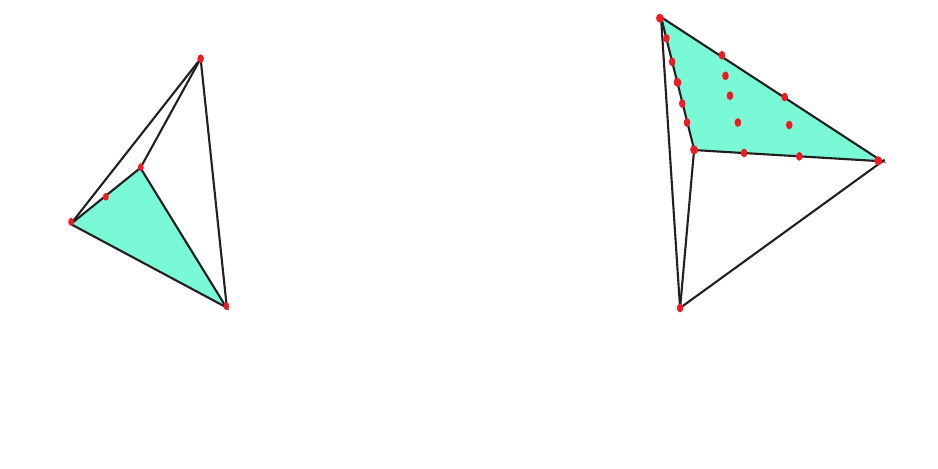}}%
    \put(0.08375209,0.09073143){\makebox(0,0)[lt]{\lineheight{1.25}\smash{\begin{tabular}[t]{l}$\newt(W_{\overline{T}_{(1,1,2)}})$\end{tabular}}}}%
    \put(0.60580683,0.09575656){\makebox(0,0)[lt]{\lineheight{1.25}\smash{\begin{tabular}[t]{l}$\newt(W_{\wt{T}_{(1,1,2)}})$\end{tabular}}}}%
  \end{picture}%
\endgroup%

 }
    \caption{Newton polytope of the disk potential of the lifted Vianna tori $\overline{T}_{(1,1,2)}$, and its BSP surgery $\wt {T}_{(1,1,2)} $. The red points correspond to the exponents of monomials.}
    \label{fig:newts}
\end{figure}

\subsubsection{Exotic tori in $\bP^n$}
We can use the  discussion in \S \ref{sub:exoticlift} to see that the lifted Vianna tori $\overline{T}_\abc$ in $\bP^3$, whose Newton polytopes are computed in \cite{chw}, have BSP handles and admit monotone BSP surgery. The Newton polytope of $BSP^{\Lambda_\abc}(\overline{T}_\abc)$ corresponds to performing a combinatorial mutation of the Newton polytope $\newt(W_{\overline{T}_\abc})$ along the two-dimensional facet corresponding to $\newt(W_{T_\abc})$. Thus, each Vianna torus $T_\abc$ admits potentially two distinct lifts in $\bP^3$. 

To see that the exotic lifts of the Vianna tori using BSP surgery are, in fact, different from the Biran lifts of Vianna tori, we study their Newton polytopes. More precisely, we study the effect of BSP surgery on the Newton polytope of the disk potential using combinatorial mutations. 

From \cite{chw} we know that $\newt(\overline{T_\abc})$ can be viewed as a cone over $\newt(T_\abc)$. Here $T_\abc$ is the two dimensional Vianna torus corresponding to the Markov triple $\abc$. Let $w$ be the primitive integral vector perpendicular to the triangular face $\newt(T_\abc)$ contained in $\newt(\overline{T_\abc})$. Note that the $w$-width of $\newt (W_{\overline{T_\abc}})$ is 4 and $h_{max} = 3$. Thus, from the combinatorial mutation formulation, we see that the Newton polytope of $BSP^{\Lambda_\abc}(\overline{T}_\abc)$ has a face isomorphic to $3\newt(T_\abc)$. Here $3\newt(T_\abc)$ refers to the triple Minkowski sum, $\newt(T_\abc) + \newt(T_\abc) + \newt(T_\abc)$.

More concretely, later in Section \ref{app} we will show that we can perform a zero-area BSP surgery to $T^n_\abc$ with profile $\Lambda^{n-1}_{\abc}$ which produces new monotone Lagrangian tori in $\bC P^n$ which are different from the ones already found in the literature such as \cite{PT20,chw,Y22}. For example, we can construct $ \wt T^3_{1,1,2} =BSP^{\Lambda_{(1,1,2)}}_0(\overline{T}^3_{1,1,2}) \hookrightarrow \bC P^3$ whose potential is given by $$W_{\wt T^3_{1,1,2}}(x,y,z)=W_{\overline{T}^3_{(1,1,2)}} (x,y,z(y+(1+x)^2)).$$ Under a judicious choice of basis of $H_1(\overline{T}^3_{(1,1,2)})$ we have 
$$W_{\overline{T}^3_{(1,1,2)}}(x,y,z) = \frac{y+(1+x)^2}{z} + \frac{z^3}{xy^2},$$
and thus, from our wall-crossing formula, we have 
$$W_{\wt T^3_{(1,1,2)}}(x,y,z) = \frac{1}{z} + \frac{z^3(y+(1+x)^2)^3}{xy^2}.$$
See Figure \ref{fig:newts} for a comparison of the Newton polytope of the above two disk potentials.

\begin{Theorem}\label{morevianna}[See Proposition \ref{newpolytopes} for description of the relevant Newton polytopes]
   The BSP surgered monotone Lagrangian torus $BSP(\overline{T}_{(a,b,c)})$ is distinct (up to symplectomorphism) from any of the lifted  Vianna tori $\overline{T}_{(a',b',c')}$ for any  pair of Markov triples $(a,b,c)$ and $(a',b',c')$. Moreover, the BSP surgered tori are pairwise distinct, i.e. if there is a symplectomorphism $\phi: \bP^n \to \bP^n$ such that $$\phi(BSP(\overline{T}_{(a,b,c)})) = BSP(\overline{T}_{(a',b',c')}), $$  then the Markov triples agree, $(a,b,c) = (a',b',c')$.
\end{Theorem}

\begin{figure}[ht]
    \makebox[\textwidth][c]{
   
        \def\svgscale{0.8}
    \import{./pics/}{polytopegraph.pdf_tex}

 }
    \caption{Mutation graph of $\newt(x+y+z+\frac{1}{xyz})$ labeled with corresponding monotone tori. The integral points on edges are labeled in pink. The blue arrow corresponds to an edge mutation, and the purple arrow is a face mutation.}
    \label{fig:newts}
\end{figure}

\begin{remark}

By proceeding inductively, we expect to create $2^n$ lifts of the Vianna torus $T_\abc$ in $\bP^{2+n}$. We provide a proof sketch. One can use \cite{DTVW,bk13} to show that performing a Biran-lift results in cone $C(N)$ of the Newton polytope. From \S \ref{sub:exoticlift} we see that an exotic BSP lift results in a combinatorial mutation $\text{mut}(S(N))$ with $N$ as a factor.   
\end{remark}

\subsection{Beginnings of a new surgery}
Let $(Z,\lambda) \xrightarrow{\pi_Y} (Y,\omega_Y)$ be a prequantum bundle, i.e. a circle bundle with a connection form $\lambda$ such that the curvature is given by $\omega_Y$, i.e. $$d\lambda = \pi_Y^* \omega_Y.$$ Note that such a one-form $\lambda$ is a contact form on $Z$. Assume $\omega_Y$ is a monotone symplectic form and $L_Y$ is a monotone Lagrangian in $Y$, i.e. the Maslov index functional and the symplectic area functional are positively proportional, $$\mu(A) = c\int _A\omega \text{ for all } A\in \pi_2(Y,L_Y) \text{ for some } c>0.$$
The canonical Bohr-Sommerfeld cover of $L_Y$ as defined in \cite[\S 4]{rg19} is a Legendrian $\Lambda$ in $Z$ that is a cover of the monotone Lagrangian $L_Y$.  In essence, the Bohr-Sommerfeld cover can be thought of as a horizontal lift (with respect to the connection $\lambda$) of the Lagrangian $L_Y$ to $Z$.

A surgery with Bohr-Sommerfeld profile $\Lambda$ switches between two different exact fillings of the disjoint Legendrian $\Lambda_2 := \Lambda \sqcup e^{\frac{i\pi}{k}}\Lambda$ where $k$ is the covering number of $\Lambda \to L_Y$.(  Here we are implicitly using the $S^1$ bundle structure on $Z$.)
Given an embedded path $c:[-1,1] \to \bC^*$ in the punctured complex plane such that $c(-1) = -1, c(1) = 1$, we can construct a filling of $\Lambda_2$, which we call $\cL_c$, see Definition \ref{def:Bsphand}.  We call such Lagrangian fillings $\cL_c$  Bohr-Sommerfeld Profile Lagrangian handles. There are two primitive homotopy classes of such paths. A Bohr-Sommerfeld profile surgery switches between these two homotopy classes. 

\begin{figure}[ht]

        \def\svgscale{0.6}
\begingroup%
  \makeatletter%
  \providecommand\color[2][]{%
    \errmessage{(Inkscape) Color is used for the text in Inkscape, but the package 'color.sty' is not loaded}%
    \renewcommand\color[2][]{}%
  }%
  \providecommand\transparent[1]{%
    \errmessage{(Inkscape) Transparency is used (non-zero) for the text in Inkscape, but the package 'transparent.sty' is not loaded}%
    \renewcommand\transparent[1]{}%
  }%
  \providecommand\rotatebox[2]{#2}%
  \newcommand*\fsize{\dimexpr\f@size pt\relax}%
  \newcommand*\lineheight[1]{\fontsize{\fsize}{#1\fsize}\selectfont}%
  \ifx\svgwidth\undefined%
    \setlength{\unitlength}{450bp}%
    \ifx\svgscale\undefined%
      \relax%
    \else%
      \setlength{\unitlength}{\unitlength * \real{\svgscale}}%
    \fi%
  \else%
    \setlength{\unitlength}{\svgwidth}%
  \fi%
  \global\let\svgwidth\undefined%
  \global\let\svgscale\undefined%
  \makeatother%
  \begin{picture}(1,0.50086665)%
    \lineheight{1}%
    \setlength\tabcolsep{0pt}%
    \put(0,0){\includegraphics[width=\unitlength,page=1]{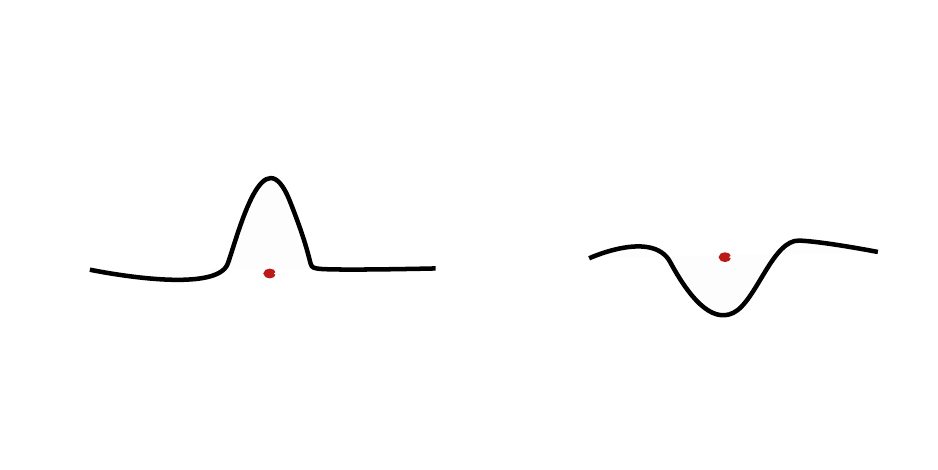}}%
    \put(0.74686736,0.09434391){\color[rgb]{1,0,0}\transparent{0.00784314}\makebox(0,0)[lt]{\lineheight{1.25}\smash{\begin{tabular}[t]{l}$\wt \gamma$\end{tabular}}}}%
    \put(0.20933896,0.10920023){\color[rgb]{0,0,0}\transparent{0}\makebox(0,0)[lt]{\lineheight{1.25}\smash{\begin{tabular}[t]{l}$\gamma$\end{tabular}}}}%
  \end{picture}%
\endgroup%

    \caption{Two primitive homotopy classes of embedded paths in $\bC^*$ with fixed end-points.  }
    \label{fig:gammapair}
\end{figure}

Intuitively, we call a Lagrangian $L$ in a symplectic manifold $M$ to \textit{have a Bohr-Sommerfeld-Profile handle} if under a symplectomorphism from $[a,b]\times Z$ to a codimension 0 submanifold with boundary in  $M$,    $L$ looks like $\cL_\gamma$ for some path $\gamma,$ see Definition \ref{bsphandle}. A Bohr-Sommerfeld-profile surgery of such a Lagrangian $L$ is defined as     \begin{equation*}
        BSP^{\Lambda}(L) = (L \sm \phi\inv ( L) )\cup _{  \Lambda_2 } \cL_{\wt \gamma}, 
    \end{equation*}  where $\wt\gamma$ denotes a path in $\bC^*$ with same end points as that of $\gamma$ but representing a different primitive homotopy class than $\gamma$.

In Lemma \ref{ref:mono} we show that if $L$ is a monotone Lagrangian and $\wt\gamma$ is chosen judiciously (see Equation \ref{eq:zeroar}), then a Bohr-Sommerfeld-profile surgery using $\wt \gamma$ preserves monotonicity, that is $BSP(L)$ is also monotone. We call such surgeries zero-area BSP surgeries and denote them  as $BSP_0(L)$.

\subsection{Acknowledgements}
We thank Chris Woodward, Denis Auroux, Mohammed Abouzaid, Georgios Dimitroglou-Rizell, Felix Schlenk and Dylan Cant for valuable discussions. We thank the anonymous referee whose suggestion helped remove a technical assumption (enough regular disks), making our results significantly stronger.  We also thank Amanda Hirschi, Julian Chaidez and Mohan Swaminathan for feedback on an earlier draft.  This work was partially supported by the grants NSF DMS 2105417 and NSF DMS 2345030.

\section{Construction of the Bohr-Sommerfeld Profile Surgery}\label{sec:Constr}

As we have already discussed in the introduction, Bohr-Sommerfeld profile surgeries are obtained from switching between two natural exact fillings of Bohr-Sommerfeld lifts. We start off with reviewing some preliminaries about constructing Bohr-Sommerfeld lifts of monotone Lagrangians. See \cite[\S~4]{rg19} for a more in-depth discussion of Bohr-Sommerfeld lifts.

\subsection{Preliminaries}\label{sub:prelim}

Let $(Y,\omega_Y)$ be a simply connected, compact, monotone, symplectic manifold with an integral symplectic form. Let $(Z,\lambda) \to Y$ be a prequantization bundle. Recall that a prequantization over $Y$ is a circle-bundle with a connection one-form $\lambda$ whose curvature is equal to the pullback of the symplectic form $\omega_Y$.

\begin{conv}
    In this section, whenever we take a parallel transport, it is with respect to the connection induced from $\lambda$.
\end{conv}

Let $L_Y$ be a monotone Lagrangian submanifold whose minimal Maslov number (which we will denote by $N_L$) is two.  
\begin{assum}\label{as1} We assume that the symplectic areas of cycles with boundary on the Lagrangian $L_Y$ satisfy the following,     
    $$[\omega_Y] \in \frac{\pi}{k} \im((H_2(Y,L_Y,\bZ)) \hookrightarrow H_2(Y,L_Y,\bR)),$$ for some $k\in \bN$. 
\end{assum}

\begin{conv}
   From now on, we will choose the minimal positive number $k$ which satisfies Assumption \ref{as1}.
\end{conv}

\begin{defn}[Parallel transport along  base]\label{def:pll}

Let  $E \to B$ be a circle-bundle with a connection $\lambda$ such that $E$ is flat. Let $P\sub E_p$ be a subset of the fiber $E_p$ such that $P$ is fixed by the holonomy group with base point at $p$, i.e. for any $\gamma:S^1 \to B$ with $\gamma(1)=p$,  the image of $P$ under the parallel transport map along $\gamma$ is $P$ itself. We define the set $\mathfrak{P}$, the parallel transport of $P$ along $B$ to be the set of points $x\in E$ such that $x\in PT_\gamma(P)$ for some curve $\gamma:[0,1] \to B$ with $\gamma(0)=p$.
\end{defn}

\begin{lemma}\label{lemm:pllmanifold}
    The set $\mathfrak{P}$ is a submanifold of $E$ such that the projection map restricts to a covering map. 
\end{lemma}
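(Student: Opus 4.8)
The plan is to show that $\mathfrak{P}$ is a covering space of $B$ by building a local trivialization directly from parallel transport, using the flatness hypothesis to guarantee that the lift is well-defined and independent of the path.

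\textbf{Setup and the candidate covering map.} First I would fix the base point $p \in B$ and the holonomy-invariant subset $P \subset E_p$. Since $E \to B$ is a flat circle bundle, on any simply connected open set $U \subset B$ containing a chosen point $q$, parallel transport from $q$ gives a canonical trivialization $E|_U \cong U \times E_q$: for $x \in E|_U$ lying over $b \in U$, send $x$ to $(b, PT_{c}(x))$ where $c$ is any path in $U$ from $b$ to $q$ — well-defined because $U$ is simply connected and $E$ is flat, so the parallel transport $PT_c$ depends only on the endpoints. The first thing to check is that $\mathfrak{P}$ is well-defined as stated, i.e. that membership ``$x \in PT_\gamma(P)$ for some path $\gamma$ from $p$'' does not secretly depend on the choice of $\gamma$: if $\gamma_1, \gamma_2$ are two paths from $p$ to $b$, then $\gamma_2^{-1} * \gamma_1$ is a loop at $p$, so $PT_{\gamma_2^{-1}*\gamma_1}(P) = P$ by holonomy-invariance, hence $PT_{\gamma_1}(P) = PT_{\gamma_2}(P)$. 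So in fact $\mathfrak{P} \cap E_b = PT_\gamma(P)$ for \emph{any} path $\gamma$ from $p$ to $b$, and this intersection has the same cardinality as $P$ (parallel transport is a diffeomorphism of fibers).

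\textbf{Local triviality.} Now I would show $\mathfrak{P}$ is a submanifold and that $\pi|_{\mathfrak{P}} \colon \mathfrak{P} \to B$ is a covering. Cover $B$ by simply connected open sets $U$. Pick any $q \in U$ and, by connecting $q$ to $p$ by a path, identify $\mathfrak{P} \cap E_q$ with a subset $P_q \subset E_q$ (of the same cardinality as $P$, and itself holonomy-invariant with base point $q$). Under the flat trivialization $E|_U \cong U \times E_q$ described above, I claim $\mathfrak{P} \cap E|_U$ corresponds exactly to $U \times P_q$. Indeed, a point over $b \in U$ lies in $\mathfrak{P}$ iff it equals $PT_{\gamma}(z)$ for some $z \in P$ and some path $\gamma$ from $p$ to $b$; choosing $\gamma$ to pass through $q$ and stay inside $U$ after reaching $q$, this is equivalent to the point being the parallel transport along a path in $U$ of an element of $P_q$, which under the trivialization is precisely the condition of lying in $U \times P_q$. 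Hence in these coordinates $\mathfrak{P}$ is a disjoint union of ``horizontal'' copies of $U$, one for each element of $P_q$; this exhibits $\mathfrak{P}$ as an embedded submanifold and simultaneously gives the required local triviality of $\pi|_{\mathfrak{P}}$, with the discrete fiber in bijection with $P$ (equivalently $P_q$, as the cardinality is locally and hence globally constant on the connected... or rather, constant on $B$ since it agrees with $|P|$ over every point).

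\textbf{Main obstacle.} I expect the only real subtlety is the bookkeeping around path-independence and the compatibility of the local trivializations: one must be careful that the identification $\mathfrak{P} \cap E_q \cong P_q$ depends on a choice of path from $q$ to $p$, but a \emph{different} choice changes $P_q$ by an element of the holonomy group, which fixes $P_q$ setwise — so the subset $\mathfrak{P} \cap E|_U$ of $U \times E_q$ is genuinely well-defined. The rest is soft: flatness is exactly what makes parallel transport into a local trivialization commuting with the bundle projection, and a subbundle-of-fibers that is locally a constant discrete set is by definition a covering space. If one wants $\mathfrak{P}$ to be a \emph{submanifold} in the strong sense (embedded, with the subspace topology agreeing with the covering topology), this also falls out of the local model $U \times P_q \hookrightarrow U \times E_q$, since $P_q$ is a finite (or at least discrete) subset of the circle $E_q$.
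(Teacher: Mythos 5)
Your proof is correct and follows essentially the same route as the paper: holonomy-invariance of $P$ makes the fiber of $\mathfrak{P}$ well-defined independently of the path, and flatness makes $\mathfrak{P}$ locally a union of horizontal sections (your $U \times P_q$ model), which gives both the submanifold structure and the covering property. You have simply spelled out in detail what the paper's two-sentence proof leaves implicit.
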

\begin{proof}
    Since the set $P$ is fixed under holonomy, we have that the set $\mathfrak{P}$ restricted to each fiber is equal to $P$ up to the natural $S^1$ action. Now the result directly follows from the fact that $E$ has no curvature, so locally $\mathfrak P$ is a horizontal section over an open neighborhood in $B$. 
\end{proof}

\begin{defn}[Canonical Bohr-Sommerfeld lift]
Let $P_k = \left\{ e^{i\frac {2\pi} {k} l} \; | \; \; l\in \{ 1, 2,\dots k\}\right\}$ be the set of $k-$th roots of unity. We define the canonical Bohr-Sommerfeld cover $\Lambda \sub Z$ as the manifold obtained by parallel transporting (see Definition \ref{def:pll}) $P_k$ along the Lagrangian $L_Y$. 
\end{defn}
Assumption \ref{as1} and Lemma \ref{lemm:pllmanifold}  imply that $\Lambda$ indeed is a submanifold of $Z$. Moreover,  $\Lambda$ is a Legendrian submanifold of $(Z,\lambda)$.  See \cite[\S~4.1]{rg19} for an algebraic-topology flavoured construction of $\Lambda$. Note that $\Lambda$ depends on an identification of $S^1$ with a fiber over $L_Y$, so there is actually a $ S^1/ \bZ_k \cong S^1 $ family of $\Lambda$.

The symplectization of $Z$, $\bR \times Z$ carries a natural $\bR \times S^1 \cong \bC^*$ action induced from the $S^1$ action on $Z$ and translation action on the $\R$-component. Thus, we can view $\bR \times Z \to Y$ as a $\bC^*$-bundle. We define a symplectic form on $\bR \times Z$ as $$\omega= d(e^s \lambda) = e^s\pi^*\omega_Y  + e^s ds\wedge \lambda,$$
and any almost complex structure $J_Y$ on $Y$ lifts uniquely to a cylindrical almost complex structure $J_c$ on $\bR \times Z$ such that 
\begin{itemize}
    \item $J_c (\partial s) = R$,
    \item $J_c|_{\ker \lambda} = \pi^* J_Y$.
    \item for any $a\in \bR$, $J_c = \tau_a^*J_c$, where $\tau_a$ is the translation function.
\end{itemize}
The almost complex structure $J_c$ on $\bR \times Z$ makes the projection map, $\pi: \bR \times Z \to Y$, a pseudoholomorphic map.

\begin{defn}[$k$-th roots of curve]
    Given an embedded curve $\gamma:[0,1] \to \bC^*$, we define $$\gamma^{1/k} := \left \{ z\in \bC^* \mid z^k \in \gamma([0,1]) \right\}. $$ Clearly $\gamma^{1/k}$ is a collection of $k$ embedded curves which are related by multiplication by $k$-th roots of unity. 
\end{defn}

\begin{figure}[ht]
    \centering

        \def\svgscale{1}
\begingroup%
  \makeatletter%
  \providecommand\color[2][]{%
    \errmessage{(Inkscape) Color is used for the text in Inkscape, but the package 'color.sty' is not loaded}%
    \renewcommand\color[2][]{}%
  }%
  \providecommand\transparent[1]{%
    \errmessage{(Inkscape) Transparency is used (non-zero) for the text in Inkscape, but the package 'transparent.sty' is not loaded}%
    \renewcommand\transparent[1]{}%
  }%
  \providecommand\rotatebox[2]{#2}%
  \newcommand*\fsize{\dimexpr\f@size pt\relax}%
  \newcommand*\lineheight[1]{\fontsize{\fsize}{#1\fsize}\selectfont}%
  \ifx\svgwidth\undefined%
    \setlength{\unitlength}{450bp}%
    \ifx\svgscale\undefined%
      \relax%
    \else%
      \setlength{\unitlength}{\unitlength * \real{\svgscale}}%
    \fi%
  \else%
    \setlength{\unitlength}{\svgwidth}%
  \fi%
  \global\let\svgwidth\undefined%
  \global\let\svgscale\undefined%
  \makeatother%
  \begin{picture}(1,0.5)%
    \lineheight{1}%
    \setlength\tabcolsep{0pt}%
    \put(0,0){\includegraphics[width=\unitlength,page=1]{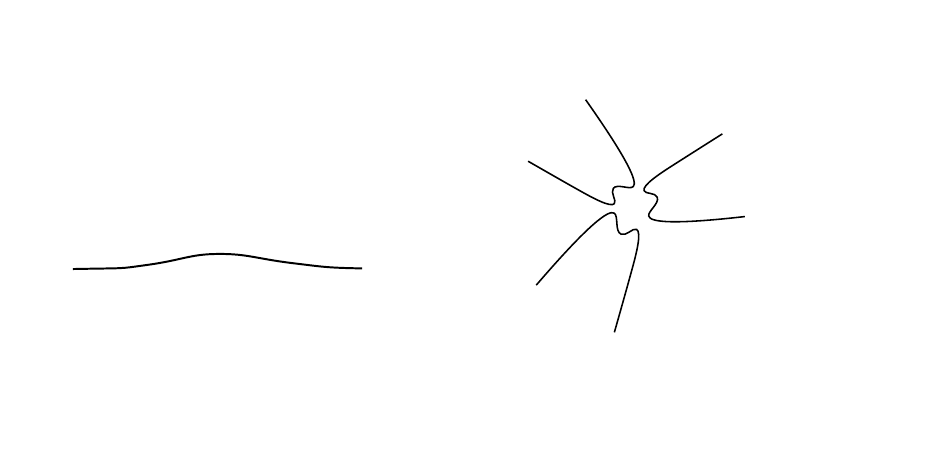}}%
    \put(0.17282051,0.09473684){\makebox(0,0)[lt]{\lineheight{1.25}\smash{\begin{tabular}[t]{l}$\gamma$\end{tabular}}}}%
    \put(0.63002696,0.10040487){\makebox(0,0)[lt]{\lineheight{1.25}\smash{\begin{tabular}[t]{l}$\gamma^{\frac{1}{k}}$\end{tabular}}}}%
    \put(0,0){\includegraphics[width=\unitlength,page=2]{kthgamma.pdf}}%
    \put(0.36518219,0.0506073){\makebox(0,0)[lt]{\lineheight{1.25}\smash{\begin{tabular}[t]{l}$\bC$\end{tabular}}}}%
    \put(0.82429148,0.04736847){\makebox(0,0)[lt]{\lineheight{1.25}\smash{\begin{tabular}[t]{l}$\bC$\end{tabular}}}}%
  \end{picture}%
\endgroup%

    \caption{$k$-th root of curve $\gamma$}
\end{figure}
\begin{defn}[Admissible curve]
    An admissible curve $\gamma$ is an embedded curve in an annulus $A(a,b) = \left \{ z \in \bC | |z|\in [a,b] \right \}$ which satisfies the following,
    \begin{itemize}
        \item $\gamma([0,1]) \sub \bH$
        \item $\gamma(0) = b, \gamma(1) = -b$
        \item there is a  $c > 0 $ such that  $\gamma(t) \in \bR$  for $t\in [0,c) \cup (1-c,1] .$
    \end{itemize}
    
\end{defn}

\begin{figure}[ht]
    \centering

        \def\svgscale{1}
\begingroup%
  \makeatletter%
  \providecommand\color[2][]{%
    \errmessage{(Inkscape) Color is used for the text in Inkscape, but the package 'color.sty' is not loaded}%
    \renewcommand\color[2][]{}%
  }%
  \providecommand\transparent[1]{%
    \errmessage{(Inkscape) Transparency is used (non-zero) for the text in Inkscape, but the package 'transparent.sty' is not loaded}%
    \renewcommand\transparent[1]{}%
  }%
  \providecommand\rotatebox[2]{#2}%
  \newcommand*\fsize{\dimexpr\f@size pt\relax}%
  \newcommand*\lineheight[1]{\fontsize{\fsize}{#1\fsize}\selectfont}%
  \ifx\svgwidth\undefined%
    \setlength{\unitlength}{225bp}%
    \ifx\svgscale\undefined%
      \relax%
    \else%
      \setlength{\unitlength}{\unitlength * \real{\svgscale}}%
    \fi%
  \else%
    \setlength{\unitlength}{\svgwidth}%
  \fi%
  \global\let\svgwidth\undefined%
  \global\let\svgscale\undefined%
  \makeatother%
  \begin{picture}(1,0.16666667)%
    \lineheight{1}%
    \setlength\tabcolsep{0pt}%
    \put(0,0){\includegraphics[width=\unitlength,page=1]{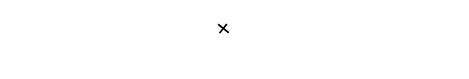}}%
    \put(0.85518565,0.03259234){\makebox(0,0)[lt]{\lineheight{1.25}\smash{\begin{tabular}[t]{l}$\R$\end{tabular}}}}%
    \put(0.16592566,0.02703835){\makebox(0,0)[lt]{\lineheight{1.25}\smash{\begin{tabular}[t]{l}$\gamma$\end{tabular}}}}%
    \put(0.44555634,0.03481366){\makebox(0,0)[lt]{\lineheight{1.25}\smash{\begin{tabular}[t]{l}$0$\end{tabular}}}}%
    \put(0,0){\includegraphics[width=\unitlength,page=2]{gamma.pdf}}%
  \end{picture}%
\endgroup%

    \caption{Admissible curve $\gamma$}
\end{figure}

\begin{defn}[Conjugate of an admissible curve]
For any admissible curve $\gamma$, a conjugate of that curve is any curve $\widetilde {\gamma} $ such that 
\begin{itemize}
    \item the curves match near the ends, i.e.  $$\gamma (t) = \wt \gamma (t) \text{ for  } t\in [0,c) \cup (1-c,1]$$
    \item the concatenated path $\gamma  \# {\wt \gamma}\inv $ has winding number one in $\bC^*$, i.e. $$[\gamma  \# {\wt \gamma}\inv]= 1   \text{ in } H_1(\bC^*) $$

\end{itemize}

\end{defn}

\subsection{Bohr-Sommerfeld Profile Lagrangian Handles} \label{bohrsomhandle}

We will construct Bohr-Sommerfeld profile surgeries by switching between fillings of the `double' Bohr-Sommerfeld lift $\Lambda_2$.  This Legendrian has two connected components, 

$$\Lambda_2 = \Lambda \cup e^{i\frac{\pi }{k}}.\Lambda.$$
In $\Lambda_2$, the connected components are related by a `phase-shift' i.e. action of some element of $S^1$,  of the other. 
\begin{figure}[ht]
    \centering

        \def\svgscale{1}
\begingroup%
  \makeatletter%
  \providecommand\color[2][]{%
    \errmessage{(Inkscape) Color is used for the text in Inkscape, but the package 'color.sty' is not loaded}%
    \renewcommand\color[2][]{}%
  }%
  \providecommand\transparent[1]{%
    \errmessage{(Inkscape) Transparency is used (non-zero) for the text in Inkscape, but the package 'transparent.sty' is not loaded}%
    \renewcommand\transparent[1]{}%
  }%
  \providecommand\rotatebox[2]{#2}%
  \newcommand*\fsize{\dimexpr\f@size pt\relax}%
  \newcommand*\lineheight[1]{\fontsize{\fsize}{#1\fsize}\selectfont}%
  \ifx\svgwidth\undefined%
    \setlength{\unitlength}{225bp}%
    \ifx\svgscale\undefined%
      \relax%
    \else%
      \setlength{\unitlength}{\unitlength * \real{\svgscale}}%
    \fi%
  \else%
    \setlength{\unitlength}{\svgwidth}%
  \fi%
  \global\let\svgwidth\undefined%
  \global\let\svgscale\undefined%
  \makeatother%
  \begin{picture}(1,1)%
    \lineheight{1}%
    \setlength\tabcolsep{0pt}%
    \put(0,0){\includegraphics[width=\unitlength,page=1]{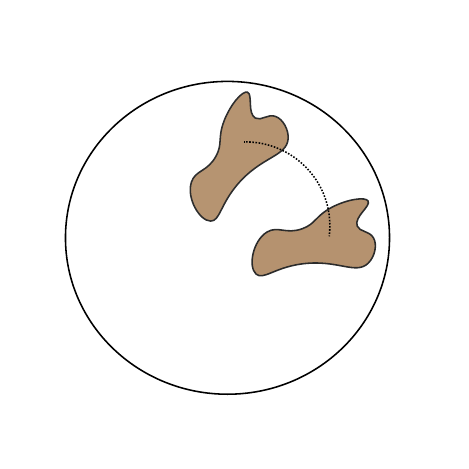}}%
    \put(0.64696355,0.4){\makebox(0,0)[lt]{\lineheight{1.25}\smash{\begin{tabular}[t]{l}$\Lambda$\end{tabular}}}}%
    \put(0.36465317,0.68123751){\makebox(0,0)[lt]{\lineheight{1.25}\smash{\begin{tabular}[t]{l}$e^{\frac{i\pi}{k}}\Lambda$\end{tabular}}}}%
    \put(0.68502024,0.60890686){\makebox(0,0)[lt]{\lineheight{1.25}\smash{\begin{tabular}[t]{l}$e^{\frac{i\pi}{k}}$\end{tabular}}}}%
    \put(0.46720647,0.20971659){\makebox(0,0)[lt]{\lineheight{1.25}\smash{\begin{tabular}[t]{l}$Z$\end{tabular}}}}%
  \end{picture}%
\endgroup%

    \caption{$\Lambda_2 = \Lambda \cup e^{i\frac{\pi }{k}}.\Lambda$}
    \label{fig:lagangle}
\end{figure}

We now proceed towards constructing fillings of the Legendrian $\Lambda_2$. The truncated symplectization $[r_1, r_2] \times Z$ is an annulus bundle over $Y$ which carries a connection induced by the connection form $\lambda$ on $Z$.

Bohr-Sommerfeld Profile handles are Lagrangian fillings of  $\{r_2\} \times \Lambda_2$ in the truncated symplectization $[r_1, r_2] \times Z$ constructed by parallel transporting. Definition \ref{def:pll} of parallel transporting along base can be extended naturally from $S^1$ bundles to their associated $\bC^*$ bundles so we omit the definition. 
In certain computations regarding $\cL_\gamma$, using the identification $A(a,b) := [\log a, \log b] \times S^1$ turns out to provide a clearer picture.

\begin{defn}[Bohr-Sommerfeld Profile handle]\label{def:Bsphand}
    Given an admissible curve $\gamma : [0,1] \to A(a,b)$, we define $$\cL_\gamma$$ to be the parallel transport of $\gamma^{1/k}$ along the Lagrangian $L_Y\hookrightarrow Y$ where we view $[\frac {\log a}{k},\frac {\log b}{k}  ] \times Z$ as an annulus bundle over $Y$.
\end{defn}
Recall that the restriction of the bundle $Z$ to the Lagrangian $L_Y\hookrightarrow Y$ has no curvature, since  $d\lambda$ vanishes on $L_Y$.

\begin{figure}[ht]
    \makebox[\textwidth][c]{
   
        \def\svgscale{1.3}
\begingroup%
  \makeatletter%
  \providecommand\color[2][]{%
    \errmessage{(Inkscape) Color is used for the text in Inkscape, but the package 'color.sty' is not loaded}%
    \renewcommand\color[2][]{}%
  }%
  \providecommand\transparent[1]{%
    \errmessage{(Inkscape) Transparency is used (non-zero) for the text in Inkscape, but the package 'transparent.sty' is not loaded}%
    \renewcommand\transparent[1]{}%
  }%
  \providecommand\rotatebox[2]{#2}%
  \newcommand*\fsize{\dimexpr\f@size pt\relax}%
  \newcommand*\lineheight[1]{\fontsize{\fsize}{#1\fsize}\selectfont}%
  \ifx\svgwidth\undefined%
    \setlength{\unitlength}{450bp}%
    \ifx\svgscale\undefined%
      \relax%
    \else%
      \setlength{\unitlength}{\unitlength * \real{\svgscale}}%
    \fi%
  \else%
    \setlength{\unitlength}{\svgwidth}%
  \fi%
  \global\let\svgwidth\undefined%
  \global\let\svgscale\undefined%
  \makeatother%
  \begin{picture}(1,0.5)%
    \lineheight{1}%
    \setlength\tabcolsep{0pt}%
    \put(0,0){\includegraphics[width=\unitlength,page=1]{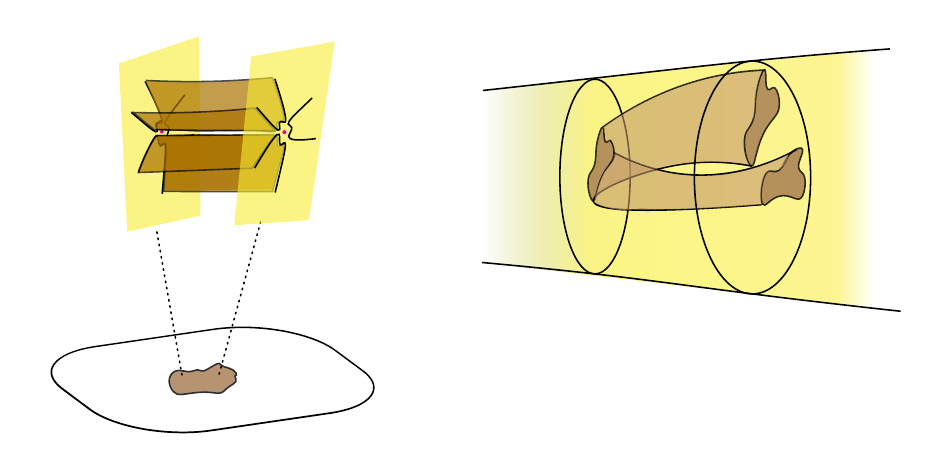}}%
    \put(0.14990508,0.06061417){\makebox(0,0)[lt]{\lineheight{1.25}\smash{\begin{tabular}[t]{l}$L_Y$\end{tabular}}}}%
    \put(0.30512563,0.07509771){\makebox(0,0)[lt]{\lineheight{1.25}\smash{\begin{tabular}[t]{l}$Y$\end{tabular}}}}%
    \put(0.79614736,0.14546621){\makebox(0,0)[lt]{\lineheight{1.25}\smash{\begin{tabular}[t]{l}$\bR\times Z$\end{tabular}}}}%
    \put(0.74251256,0.26997767){\makebox(0,0)[lt]{\lineheight{1.25}\smash{\begin{tabular}[t]{l}$\Lambda_2$\end{tabular}}}}%
    \put(0.60573425,0.36041318){\makebox(0,0)[lt]{\lineheight{1.25}\smash{\begin{tabular}[t]{l}$\cL_\gamma$\end{tabular}}}}%
    \put(0.26157453,0.28001116){\makebox(0,0)[lt]{\lineheight{1.25}\smash{\begin{tabular}[t]{l}$\bC^*$\end{tabular}}}}%
    \put(0.14208822,0.32970408){\makebox(0,0)[lt]{\lineheight{1.25}\smash{\begin{tabular}[t]{l}$\cL_\gamma$\end{tabular}}}}%
    \put(0.26771637,0.39838079){\makebox(0,0)[lt]{\lineheight{1.25}\smash{\begin{tabular}[t]{l}$\gamma^{\frac{1}{k}}$\end{tabular}}}}%
  \end{picture}%
\endgroup%

 }
    \caption{$\cL_\gamma$ from two perspectives}
\end{figure}
 We can write an admissible curve using this identification as $$\gamma(t) = (\gamma_l (t),\gamma_\theta (t)).$$ Then, one connected component of $\gamma^{1/k}$ is given by $\gamma_1 (t)  = (\frac{1}{k}\gamma_l (t),\frac{1}{k}\gamma_\theta (t))$. The other components are given by similar formulae where the $S^1$ co-ordinate is shifted by a constant angle.

\begin{remark}\label{rem:emb}
    A different description of $\cL_\gamma$ is given by the following explicit embedding
    \begin{align}\label{eq:handemb}      
    i_\gamma:[0,1]\times \Lambda &\to [\log a, \log b] \times Z \\
     i_\gamma(t,p) &= \left(\frac{1}{k}\gamma_l (t),e^{i\frac{1}{k}\gamma_\theta (t)}.p\right)
    \end{align}
\end{remark}

\begin{remark}[Generalization of BSP surgery]
    From  Remark \ref{rem:emb} one will notice that the construction of BSP can be naturally generalized when we have the following setup:
    Let $(Z,\lambda)$ be a contact manifold with a Legendrian $\Lambda$ such that the Reeb flow $\phi_t$ is periodic on $\Lambda$ with the same period $T$ i.e. $\phi_T|_{\Lambda} = Id$.  Further assume that there is a $t<T$ such that $\phi_t(\Lambda) = \Lambda$. Then we can use the Reeb flow  to define two fillings - one by going from time $0$ to $t$ and another by going from time $0$ to $-(T-t)$.
\end{remark}

\begin{lemma}
    The manifold $\cL_\gamma$ is a Lagrangian.
\end{lemma}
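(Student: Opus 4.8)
The strategy is to reduce the statement to the already-established fact that parallel transport (Definition~\ref{def:pll}) of a subset fixed by holonomy produces a submanifold, and to the observation that parallel transport along $L_Y$ is isotropic because $d\lambda$ vanishes on $L_Y$. Concretely, I would first note that $\cL_\gamma$ has dimension equal to $\dim L_Y + 1 = \dim Y - \dim L_Y + 1 + (\dim L_Y - \dim L_Y)$... more precisely, $\dim \cL_\gamma = \dim L_Y + 1$, while the ambient $[\log a,\log b]\times Z$ has dimension $\dim Z + 1 = (\dim Y + 1) + 1 = 2\dim Y - \dim Y + \dim L_Y + 1$; since $L_Y$ is Lagrangian in $Y$, $\dim L_Y = \tfrac12 \dim Y$, so $\dim \cL_\gamma = \tfrac12\dim Y + 1 = \tfrac12(\dim Y + 2) = \tfrac12 \dim([\log a,\log b]\times Z)$. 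This is the required half-dimension condition. I would record this first.

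Next I would show $\cL_\gamma$ is an immersed (in fact embedded) submanifold and compute its tangent spaces. Using the explicit embedding $i_\gamma$ from Remark~\ref{rem:emb}, $i_\gamma(t,p) = (\tfrac1k\gamma_l(t), e^{i\frac1k\gamma_\theta(t)}.p)$, the differential at $(t,p)$ splits into the image of $\partial_t$, which is a combination of the $\partial_s$-direction (scaled $\gamma_l'(t)$) and the Reeb direction $R$ (scaled $\gamma_\theta'(t)$), and the image of $T_p\Lambda$, which lies in $\ker\lambda$ and projects isomorphically to $T_{\pi(p)}L_Y$ (since $\Lambda$ is a Legendrian cover of $L_Y$). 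The key point is that $T_p\Lambda \subset \ker\lambda \cap \ker(d\lambda$-pullback$)$ — that is, $\pi_*(T_p\Lambda) = T L_Y$ is isotropic in $(Y,\omega_Y)$.

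Then I would verify $\omega = d(e^s\lambda) = e^s\pi^*\omega_Y + e^s\,ds\wedge\lambda$ vanishes on pairs of these tangent vectors. There are three cases: (i) two vectors from $T_p\Lambda$: the $e^s\pi^*\omega_Y$ term vanishes because $\pi_*$ maps them into the Lagrangian $TL_Y$ where $\omega_Y$ vanishes, and the $e^s\,ds\wedge\lambda$ term vanishes because both vectors are killed by $ds$ and by $\lambda$ (as $T\Lambda\subset\ker\lambda$ and has no $\partial_s$-component); (ii) the $\partial_t$-image paired with a vector $v\in T_p\Lambda$: the $ds\wedge\lambda$ term gives $\gamma_l'(t)\cdot ds(\partial_s)\cdot\lambda(v) - \lambda$-component of $\partial_t$-image times $ds(v)$, both of which vanish since $\lambda(v)=0$ and $ds(v)=0$; the $\pi^*\omega_Y$ term vanishes since $\pi_*(\partial_t$-image$)=0$ as the curve $\gamma$ is vertical over $Y$; (iii) $\partial_t$-image with itself is automatically zero. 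Hence $\omega|_{\cL_\gamma}=0$, and combined with the half-dimension count $\cL_\gamma$ is Lagrangian.

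The main obstacle is bookkeeping: making precise that $\cL_\gamma$ is genuinely a smooth embedded submanifold (not merely immersed), which requires that $\gamma$ is embedded and that the $k$-th root $\gamma^{1/k}$ interacts correctly with the $\bZ_k$-covering structure of $\Lambda\to L_Y$ so that no self-intersections are introduced under parallel transport — this is where one invokes the earlier lemma on $\mathfrak{P}$ being a covering, applied to the $\bC^*$-bundle restricted to $L_Y$ (which is flat). Once embeddedness is in hand, the isotropy computation above is routine. I would therefore organize the write-up as: (1) embeddedness via the flat-bundle parallel transport lemma; (2) tangent space description via $i_\gamma$; (3) the three-case computation that $\omega$ pulls back to zero; (4) dimension count to upgrade isotropic to Lagrangian.
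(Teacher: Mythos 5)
Your proposal is correct and takes essentially the same route as the paper's proof: both verify directly that the explicit embedding $i_\gamma$ of Remark \ref{rem:emb} is Lagrangian, splitting the tangent space into the $\partial_t$-image (a combination of $\partial_s$ and the Reeb field $R$) and the phase-shifted Legendrian directions, and checking that $d(e^s\lambda)$ vanishes on all pairings. Your explicit dimension count and the embeddedness discussion via the flat-bundle parallel-transport lemma simply spell out what the paper leaves implicit.
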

\begin{proof}
    There are multiple ways to see it. A direct way is to check that the embedding in Remark \ref{rem:emb} is a Lagrangian embedding. Indeed, we can see that for a fixed $t_0$, the embedding $i_\gamma(t_0,\_ )$ embeds $\Lambda$ to a phase-shift ( i.e. shifted by the circle action) of the Legendrian $\{\frac{\gamma_l(t_0)}{k}\} \times \Lambda$ in $\{\frac{\gamma_l(t_0)}{k}\}\times Z$.  Notice that we have $$d(e^s\lambda) (R,v) = 0,$$  $$d(e^s\lambda) (\partial_s,v)=0,$$ where $v \in T\Lambda$ and $R$ is the Reeb field. Thus, the result then follows from  the fact that  $(i_\gamma)_* (\partial_t)$ pairs trivially with $(i_\gamma)_* (v)$ for any $v\in T\Lambda$.
\end{proof}
\begin{prop}\label{prop:exact_in_neck}
    The BSP Lagrangian handle $\cL_\gamma$ is exact in $([\log a, \log b] \times Z, d(e^s\lambda))$. Furthermore, we can choose a function $f_\gamma$ on $[0,1]\times \Lambda$ such that $e^s\lambda|_{\cL_\gamma} = d (f_\gamma \circ i_\gamma\inv)$ where $i_\gamma $ is defined as in Equation \ref{eq:handemb}. In addition, we can choose $f_\gamma$ to depend only on the $[0,1]$ component, i.e.
    $f_\gamma (t,p) = \Tilde{f}_\gamma(t)$.
\end{prop}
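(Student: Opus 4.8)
The plan is to work directly with the explicit parametrization $i_\gamma(t,p) = \left(\tfrac{1}{k}\gamma_l(t), e^{i\frac{1}{k}\gamma_\theta(t)}.p\right)$ from Remark \ref{rem:emb} and compute the pullback one-form $i_\gamma^*(e^s\lambda)$ on $[0,1]\times\Lambda$. First I would observe that the $Z$-component of $i_\gamma$ at fixed $t$ is an isometry of the Legendrian $\Lambda$ onto a phase-shifted copy sitting in the level set $\{s = \gamma_l(t)/k\}$ — exactly as in the previous lemma — so for any tangent vector $v\in T\Lambda$ the contribution $i_\gamma^*(e^s\lambda)(v)$ involves $\lambda$ paired with the pushforward of $v$, which lies in the kernel of $\lambda$ because $\Lambda$ (and any phase-shift of it) is Legendrian. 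Hence $i_\gamma^*(e^s\lambda)$ has no $d\Lambda$-component and is of the form $g(t,p)\,dt$ for some function $g$.

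Next I would identify $g$. Along the $\partial_t$ direction, $(i_\gamma)_*\partial_t$ decomposes into a vertical piece $\tfrac{1}{k}\gamma_l'(t)\,\partial_s$ and a piece $\tfrac{1}{k}\gamma_\theta'(t)$ times the generator of the circle action, i.e. $\tfrac{1}{k}\gamma_\theta'(t)$ times the Reeb field $R$ (since $\lambda$ is the connection form and the $S^1$-action is generated by $R$). Pairing with $e^s\lambda$: the $\partial_s$ part contributes $0$ because $\lambda(\partial_s) = 0$, and the $R$ part contributes $e^{s}\lambda(R) = e^{\gamma_l(t)/k}\cdot\tfrac{1}{k}\gamma_\theta'(t)$, which depends only on $t$. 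Therefore $i_\gamma^*(e^s\lambda) = \tilde h(t)\,dt$ where $\tilde h(t) = \tfrac{1}{k}e^{\gamma_l(t)/k}\gamma_\theta'(t)$. This is manifestly a closed (indeed exact) one-form on $[0,1]\times\Lambda$ pulled back from $[0,1]$, so setting $\tilde f_\gamma(t) := \int_0^t \tilde h(\tau)\,d\tau$ gives $i_\gamma^*(e^s\lambda) = d(\tilde f_\gamma\circ \pr_{[0,1]})$, and $f_\gamma := \tilde f_\gamma\circ i_\gamma^{-1}$ (equivalently $f_\gamma(t,p) = \tilde f_\gamma(t)$ before transporting by $i_\gamma$) does the job, establishing exactness of $\cL_\gamma$ and the claimed form of the primitive simultaneously.

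The main obstacle is bookkeeping the identification of the $S^1$-action's infinitesimal generator with the Reeb vector field $R$ and making sure the factor of $\tfrac 1k$ and the $e^{\gamma_l(t)/k}$ weight are tracked correctly through the $\gamma^{1/k}$ reparametrization — in other words, the content is entirely in unwinding the chain rule for $i_\gamma$ and invoking $d\lambda|_{\Lambda} = 0$ together with $\lambda(R) = 1$, $\iota_R d\lambda = 0$. One should also note that the ``$e^s\lambda|_{\cL_\gamma}$'' in the statement means the pullback to $\cL_\gamma$ of the ambient Liouville form, so the computation above, done on the abstract domain $[0,1]\times\Lambda$ via the embedding, is precisely what is required; no separate argument is needed. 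Everything else — that $\tilde f_\gamma$ is well-defined, smooth, and depends only on $t$ — is immediate from the formula for $\tilde h$.
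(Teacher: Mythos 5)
Your proposal is correct and rests on the same ingredients as the paper's proof: the explicit embedding $i_\gamma$, the fact that every phase-shift $e^{i\theta}.\Lambda$ is Legendrian (so $\Lambda$-directions pair trivially with $e^s\lambda$), and the identification of the $t$-derivative with an $s$-translation plus a Reeb/circle-action component. The only difference is organizational: you compute the pullback one-form pointwise as $\tilde h(t)\,dt$ with $\tilde h(t)=\tfrac 1k e^{\gamma_l(t)/k}\gamma_\theta'(t)$ and integrate, which packages in one stroke what the paper does in two steps (vanishing of periods over $H_1(\cL_\gamma)$, then a path-integral construction of $f_\gamma$ with a path decomposition showing dependence on $t$ alone).
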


\begin{proof}
    It is enough to check that for any loop $c$ representing a cycle $[c]$ in $H_1(\cL_\gamma),$ the integral $\int_c e^s\lambda$ vanishes. Let $c = (c_\bR, c_Z)$, we can homotope $c$ to ensure that $c_\bR$ is a constant map. Now use that fact that $e^{i\theta} \Lambda$ is a Legendrian for any $\theta \in \bR$ to conclude that $$\int_c e^s\lambda = 0.$$

    \noindent Thus, we can define a function $f_\gamma$ on $[0,1]\times \Lambda$ by integrating $e^s\lambda$ along paths such that 
    $$ d (f_\gamma \circ i_\gamma\inv) = e^s\lambda.$$Note that since $e^{i\theta}.\Lambda$ is a Legendrian, we can ensure $f_\gamma(t,p)= \Tilde{f}_\gamma(t)$. Moreover, we have the following explicit formula by choosing $(0,p) \in \{ 0\} \times \Lambda$ as a base point and integrating paths $\kappa: [0,1] \to [0,1]\times \Lambda$ such that $\kappa(0)= (0,p) , \kappa(1)=(s_q,q)$. We can homotope $\kappa$ to ensure that $\kappa|_{[0,1/2]}=(2ts_q,p)$ and $\kappa|_{[1/2,1] }$ is a path in $\{s_q \} \times Z$.  

    \begin{align}
    \;\;\int_\kappa i_\gamma^* e^s\lambda  &= \int_0^{1/2} \kappa^*  i_\gamma^* e^s \lambda + \int_{1/2}^1 \kappa^* 
  i_\gamma^* e^s \lambda\\
    &=\int_0^{1/2} \kappa^*   i_\gamma^* e^s \lambda + 0
    \end{align}
    Now note that $i_\gamma \circ \kappa (t) = \left (\frac{\gamma_l(2ts_q)}{k}, e^{i\frac{\gamma_\theta(2ts_q)}{k}}.p\right),$ thus we see that the integral $\int_\kappa i_\gamma^* e^s\lambda$ depends solely on $s_q$, thus there is a $\Tilde{f}$ such that $f_\gamma(t,q)=\Tilde{f}_\gamma(t)$.
\end{proof}

\begin{remark}\label{rem:ctnyingamma}
    Note that the proof above shows that the primitive $\Tilde{f}_\gamma$ of  $i_\gamma^* e^s\Lambda$  depends continuously on the path $\gamma$ and its derivative $\gamma'$.  By shrinking the interval $[a,b]$ in the embedding $\phi : [a,b] \times Z \to M$, we can assume that the action of Reeb chords bounding $\Lambda_2$ in $\{ b\}\times Z$ are rational. From the continuous dependence of $\widetilde{f}_\gamma$ on $\gamma$, we see that, by perturbing $\gamma$ appropriately, we can assume $\widetilde{f}_\gamma (1) - \widetilde{f}_\gamma (0) $ is a rational number. Thus, we can assume any disk with boundary on the closure  $ \overline{\cL_\gamma}$ of the BSP handle in the symplectic cutting of $[a,b]\times Z$ at $\{a,b\}\times Z$ has rational symplectic area. These rationality results will be pertinent while constructing broken divisors to make regularize  moduli space of SFT buildings using domain dependent perturbations.
\end{remark}

\subsection{Bohr-Sommerfeld Profile Surgery}

As we have already mentioned,  BSP surgery is obtained by switching exact fillings. We now describe a class of Lagrangians on which we can perform a BSP surgery. These Lagrangians are obtained by gluing Lagrangian fillings and cappings of the Legendrian $\Lambda_2$. 

\noindent Let $(M,\omega_M)$ be a compact symplectic manifold and $L$ be an embedded Lagrangian.

\begin{defn}[Lagrangians with Bohr-Sommerfeld Profile Handles]\label{bsphandle}
    We call a Lagrangian $ L \hookrightarrow M$ to have a Bohr-Sommerfeld Handle if there is a symplectic embedding $\phi : [a,b] \times Z \to M$, for a prequantization bundle $(Z,\lambda) \to (Y,\omega_Y)$ such that,
   $$\phi (\cL_\gamma) \sub L$$for an admissible curve $\gamma$. We say that $L$ has a handle modelled the curve $\gamma$ with profile $\Lambda.$
\end{defn}

A natural question at this stage is, how does the choice of the curve $\gamma$ affect the Hamiltonian isotopy class of a Lagrangian-with-BSP handle? We now investigate this line of thought.  We define the notion of relative Hamiltonian isotopy which lets us extend a Hamiltonian isotopy of $\cL_{\gamma_\tau}$ in $\phi([a,b]\times Z)$ to an isotopy of the closed Lagrangian $L_\tau.$

\begin{defn}[Relative Hamiltonian Isotopy]
Let $$\Phi: K\times [0,1] \to M $$ be a Lagrangian isotopy of a Lagrangian $K$ in $M$ which is constant on the boundary $\partial K$. We say that $\Phi$ is a relative Hamiltonian isotopy if there is a family of Hamiltonian functions $\{H_\tau\}_{\tau \in \interval}$ such that the isotopy induced from it is $\Phi$ and $H_\tau$ is equal to $0$ on the boundary $\partial K$, i.e.$$H_\tau (\partial K) = \{ 0 \}.$$
    
\end{defn}

\begin{lemma}
    Let $K$ be a Lagrangian in an open subset $U \sub (M,\omega)$ with boundary $\partial K = \partial U \cap K$. Let $L$ be a closed Lagrangian in $M$ such that $L \cap U = K$. Assume that $K \sub L$. Any relative Hamiltonian isotopy of $K$ in $U$ can be extended to a Hamiltonian isotopy of $L$ in $M$.
\end{lemma}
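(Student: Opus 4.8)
The plan is to extend the relative Hamiltonian isotopy of $K$ inside $U$ by the trivial (zero) isotopy outside $U$, and to check that the extended data still define a genuine global Hamiltonian isotopy of $M$. First I would take the family of Hamiltonian functions $H_\tau$ on $U$ furnished by the definition of relative Hamiltonian isotopy, together with the hypothesis $H_\tau(\partial K) = \{0\}$. The key technical point is that one cannot in general extend $H_\tau$ from $U$ to all of $M$ by zero and expect smoothness — the function need not vanish on all of $\partial U$, only on $\partial K = \partial U \cap K$. So the first real step is to replace $H_\tau$ with a function $\widetilde H_\tau$ that agrees with $H_\tau$ on a neighborhood of $K$ in $U$, still generates an isotopy carrying $K$ the same way, and is supported in a small neighborhood of $K$; then extending $\widetilde H_\tau$ by zero to $M \setminus U$ produces a smooth compactly supported family on $M$.

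Concretely, I would proceed as follows. (1) Since $K$ is a closed subset of $U$ and $\Phi$ is constant near $\partial K$ (being constant on $\partial K$ and a relative isotopy), choose a relatively compact open neighborhood $V$ of $\bigcup_\tau \Phi_\tau(K)$ with $\overline V \subset U$; this uses compactness of $K$ and the fact that the isotopy stays inside $U$. (2) Pick a cutoff function $\chi: M \to [0,1]$ equal to $1$ on a neighborhood of $\overline V$ and supported in $U$, and set $\widetilde H_\tau := \chi \, H_\tau$, extended by $0$ outside $U$. This is a smooth family of compactly supported Hamiltonians on $M$. (3) The flow $\widetilde\Phi_\tau$ of the time-dependent vector field $X_{\widetilde H_\tau}$ is a global Hamiltonian isotopy of $M$; since $\widetilde H_\tau = H_\tau$ on a neighborhood of the orbit of $K$, uniqueness of integral curves gives $\widetilde\Phi_\tau|_K = \Phi_\tau|_K$. (4) Finally, because $L \cap U = K$ and $\widetilde\Phi_\tau$ is supported in a neighborhood of the $K$-orbit inside $U$, we get $\widetilde\Phi_\tau(L) = \widetilde\Phi_\tau(K) \cup (L \setminus K) = \Phi_\tau(K) \cup (L \setminus K)$, and this is again a smooth embedded closed Lagrangian — it is Lagrangian because $\widetilde\Phi_\tau$ is a symplectomorphism and $L$ is Lagrangian, and it is smooth because near the "seam" $\partial K$ the isotopy $\Phi_\tau$ (hence $\widetilde\Phi_\tau$) is the identity by the boundary condition $H_\tau(\partial K) = \{0\}$, so no smoothness is lost when gluing. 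Thus $\widetilde\Phi_\tau$ is the desired Hamiltonian isotopy of $L$ in $M$.

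The main obstacle I anticipate is step (1)–(2): making the cutoff legitimate, i.e. producing a neighborhood $V$ of the entire swept-out region $\bigcup_\tau \Phi_\tau(K)$ whose closure is still inside $U$. This needs that the orbit of $K$ under $\Phi$ is a compact subset of the open set $U$, which follows from $K$ being compact (it is closed in $M$ with $K = L\cap U$ and $L$ closed, but one should also ensure $K$ itself is compact — typically $K = \overline{\cL_\gamma}$ is compact) and $[0,1]$ compact, together with the hypothesis that the relative isotopy never leaves $U$. Once that compactness is in hand, the cutoff construction and the verification that smoothness and the Lagrangian condition survive the gluing are routine. One subtlety worth flagging explicitly in the write-up: the boundary condition $H_\tau(\partial K) = \{0\}$ is exactly what guarantees that $\Phi_\tau$ fixes $\partial K$ pointwise (not merely setwise), so that the two pieces $\Phi_\tau(K)$ and $L\setminus K$ glue smoothly along the common boundary to form an embedded submanifold rather than merely a topological one.
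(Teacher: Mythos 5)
Your proposal is correct and follows essentially the same route as the paper: cut off the generating Hamiltonians near the region swept out by $K$ (which does not change the induced isotopy of $K$), extend by zero to all of $M$, and observe that the resulting global flow fixes $L\setminus U$ pointwise while realizing $\Phi$ on $K$; your version simply makes the paper's terse argument precise (compactness of the swept region, the cutoff $\chi$, ODE uniqueness). One tiny correction: the pointwise fixing of $\partial K$ comes from the definition of relative Hamiltonian isotopy being constant on $\partial K$ (and, for your construction, smoothness of $\widetilde\Phi_\tau(L)$ is automatic since $\widetilde\Phi_\tau$ is a global diffeomorphism), not from the condition $H_\tau(\partial K)=\{0\}$ alone, which by itself does not force $X_{H_\tau}$ to vanish along $\partial K$.
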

\begin{proof}
    From the definition of relative Hamiltonian isotopy, we know there is a family $H_\tau$ which realizes the isotopy of $K$ such that $H_\tau (\partial K) = 0.$ Note that changing $H_\tau$ outside a small normal neighborhood of $K_\tau$ still preserves the induced isotopy. We can extend  upto changing $H_\tau$ outside a small normal neighborhood of $K$, we can extend $H_\tau$ by defining it to be $0$ on $L$ and extending to the whole of $M$. The Hamiltonian isotopy from this extension would fix $L\setminus U$ and realize the isotopy of $K$ in $U.$
\end{proof}

We now explore how the choice of the curve $\gamma$ determines the relative Hamiltonian isotopy class of $\cL_\gamma$. We use the notion of exact Lagrangian isotopy as in \cite[\S~6.1]{pol01} to construct Hamiltonian isotopy. Define the following one-form on $(-\infty,\infty) \times S^1$ with coordinates $(s,\theta)$  , $$\lambda_k = e^{s/k} d\theta.$$ 

\begin{prop}
    Let $\gamma_\tau$ be an isotopy of embedded curves in the annulus $A(a,b)$. The isotopy of BSP Lagrangian handles, $\cL_{\gamma_\tau}$ is a relative Hamiltonian isotopy if and only if $$\int_{\gamma_\tau} \lambda_k $$ is constant for all $\tau.$
\end{prop}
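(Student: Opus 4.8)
The plan is to reduce the statement to a standard fact about exact Lagrangian isotopies, namely that an isotopy $\{L_\tau\}$ is Hamiltonian precisely when the ``flux'' one-form vanishes; here the only subtlety is the relative boundary condition, which is why the flux must \emph{vanish}, not merely be exact. First I would write the BSP handle concretely via the embedding $i_{\gamma_\tau}:[0,1]\times\Lambda\to[\log a,\log b]\times Z$ from Remark \ref{rem:emb}, so that $\cL_{\gamma_\tau}$ is parametrized uniformly in $\tau$ by the fixed manifold $K=[0,1]\times\Lambda$, with $\partial K=\{0,1\}\times\Lambda$. Because $\gamma_\tau$ is an admissible curve for every $\tau$, the embedding $i_{\gamma_\tau}$ agrees near the ends $t\in[0,c)\cup(1-c,1]$ with a $\tau$-independent map (the admissibility conditions $\gamma_\tau(0)=b$, $\gamma_\tau(1)=-b$, $\gamma_\tau$ real near the ends); hence the isotopy is automatically constant on $\partial K$, so the ``constant on $\partial K$'' hypothesis in the definition of relative Hamiltonian isotopy is free.

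Next I would compute the flux. Let $X_\tau=\frac{d}{d\tau}i_{\gamma_\tau}$ be the generating vector field of the isotopy and let $\alpha=e^s\lambda$ be the Liouville form on $[\log a,\log b]\times Z$. The standard criterion (see \cite[\S 6.1]{pol01}) says $\{\cL_{\gamma_\tau}\}$ is an exact Lagrangian isotopy iff the closed one-form $i_{\gamma_\tau}^*(\iota_{X_\tau}\omega)$ on $K$ is exact for each $\tau$, and it can be realized by a \emph{relative} Hamiltonian isotopy (one with $H_\tau|_{\partial K}=0$) iff in addition the primitive can be chosen to vanish on $\partial K$, equivalently iff $\int_\sigma i_{\gamma_\tau}^*(\iota_{X_\tau}\omega)=0$ for every relative cycle $\sigma$ in $(K,\partial K)$ — i.e. for the one generator $[0,1]\times\{p\}$, together with the absolute cycles in $H_1(\Lambda)$. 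By the exactness computation already established in the Proposition preceding Remark \ref{rem:ctnyingamma} (the primitive $\tilde f_{\gamma_\tau}$ of $i_{\gamma_\tau}^*\alpha$ depends only on the $[0,1]$-coordinate and the $H_1(\Lambda)$-periods of $\alpha$ all vanish), the absolute-cycle contributions are automatically zero and the whole flux is detected by the single relative cycle $[0,1]\times\{p\}$. Using Cartan's formula $\iota_{X_\tau}\omega=\iota_{X_\tau}d\alpha=\mathcal L_{X_\tau}\alpha-d(\iota_{X_\tau}\alpha)$ and pulling back, the exact term drops out of the period and one gets
\[
\frac{d}{d\tau}\!\int_{[0,1]\times\{p\}} i_{\gamma_\tau}^*\alpha
\;=\;\int_{[0,1]\times\{p\}} i_{\gamma_\tau}^*(\iota_{X_\tau}\omega).
\]
Now the key identification: since $f_{\gamma_\tau}(t,p)=\tilde f_{\gamma_\tau}(t)$ and, as noted in Remark \ref{rem:ctnyingamma}, the primitive is obtained by integrating $e^s\lambda$ along the path $t\mapsto(\tfrac{\gamma_{\tau,l}(t)}{k},e^{i\gamma_{\tau,\theta}(t)/k}.p)$, a direct substitution gives $\tilde f_{\gamma_\tau}(1)-\tilde f_{\gamma_\tau}(0)=\int_{\gamma_\tau^{1/k}}e^s\lambda=\int_{\gamma_\tau}\lambda_k$ (the last equality is the change of variables $s\mapsto s/k$, $\theta\mapsto\theta/k$ defining $\lambda_k=e^{s/k}d\theta$ on the base annulus, together with the fact that the $k$ branches of $\gamma^{1/k}$ each contribute the same). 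Hence the relative flux period equals $\frac{d}{d\tau}\int_{\gamma_\tau}\lambda_k$.

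Finally I would assemble the equivalence. If $\int_{\gamma_\tau}\lambda_k$ is constant in $\tau$, then for every $\tau$ the flux one-form $i_{\gamma_\tau}^*(\iota_{X_\tau}\omega)$ is exact with a primitive vanishing on $\partial K$ (all its periods — the one relative period and the $H_1(\Lambda)$-periods — vanish), so one constructs $H_\tau$ by the usual recipe: take the primitive $g_\tau$ on $K$ with $g_\tau|_{\partial K}=0$, extend it to a neighborhood of $\cL_{\gamma_\tau}$ in $[\log a,\log b]\times Z$ in a way that still vanishes on the boundary, and cut it off; the resulting $H_\tau$ generates the isotopy and satisfies $H_\tau(\partial K)=\{0\}$, so by the Lemma preceding this Proposition it extends to a genuine Hamiltonian isotopy of the ambient closed Lagrangian. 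Conversely, if the isotopy is relative Hamiltonian, the generating $H_\tau$ has $H_\tau|_{\partial K}=0$, so its restriction to $K$ is a primitive of the flux form vanishing on $\partial K$, forcing the relative period $\frac{d}{d\tau}\int_{\gamma_\tau}\lambda_k$ to be zero for all $\tau$, i.e. $\int_{\gamma_\tau}\lambda_k$ is constant. I expect the main obstacle to be bookkeeping the relative-versus-absolute cycle distinction cleanly — making sure the boundary condition $H_\tau|_{\partial K}=0$ is exactly what upgrades ``exact isotopy'' to ``relative Hamiltonian isotopy'' and corresponds exactly to the vanishing (not just constancy in absolute homology) of the $\lambda_k$-period — rather than any hard analysis; the cutoff/extension step is routine given the earlier Lemma.
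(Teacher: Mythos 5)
Your proposal is correct and follows essentially the same route as the paper: both reduce to the exact-isotopy/flux criterion of \cite[\S 6.1]{pol01}, kill the absolute $H_1(\Lambda)$-periods using that the $t$-slices are phase-shifted Legendrians, and identify the remaining relative period over $[0,1]\times\{p\}$ with $\frac{d}{d\tau}\int_{\gamma_\tau}\lambda_k$ (your stated equality $\int_{\gamma_\tau^{1/k}}e^s\lambda=\int_{\gamma_\tau}\lambda_k$ is off by a harmless factor of $1/k$, which does not affect constancy).
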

\begin{proof}
    The proof technique has already appeared in \cite[Corollary~2.5]{lm10}, \cite[Lemma~2.16]{Cha23}. Consider the map $$\Phi: \cL_{\gamma_0} \times [0,1] \to [a,b] \times Z,$$ induced from the isotopy $\gamma_\tau.$ Since this is a Lagrangian isotopy, we have $$\Phi ^* d(e^s\lambda) = \alpha_\tau \wedge d\tau.$$ We will now show that under the hypothesis of this proposition, $\alpha_\tau$ is exact for all $\tau$ such that $\alpha_\tau = dH_\tau$ where $H_\tau|_{\partial \cL_{\gamma_\tau}} \equiv 0.$ We already know that for any curve $c:[0,1] \to \{r\} \times  e^{i\theta}.\Lambda$, we have $$\int_c e^s \lambda = 0.$$ We now check that $$\int_{c_\tau} \alpha_\tau =0$$ for a curve $c_\tau:[0,1] \to [0,1]\times \Lambda \cong \cL_{\gamma_\tau}$. We select a family of paths as follows, fix a point $p\in \Lambda$, 
    $$P:[0,1]\times [0,1] \to Z$$
    $$P(\tau,t) = i_{\gamma_\tau}(t,p). $$
    Note that   
    \begin{equation} \label{eq;diffgeom}
         \int_{P_\tau} \alpha_\tau = 0 \iff \frac{d}{d\tau}\int_{P_\tau} e^s\lambda = 0 
    \end{equation} 
    Write the isotopy of curves in cylindrical coordinates as  $\gamma_\tau = (\gamma^\tau_l, \gamma^\tau_\theta),$ then from Equation \ref{eq;diffgeom} we have  
    $$  \int_{P_\tau} \alpha_\tau = 0 \iff \frac{d}{d\tau}\int_0^1 e^{\gamma^\tau_l (t) /k} {{\gamma^\tau}'}_{\theta }(t) dt =0  \iff \int_{\gamma_\tau} \lambda_k \text{ is constant} $$
\end{proof}

\begin{figure}[ht]
    \makebox[\textwidth][c]{
   
        \def\svgscale{1.3}
\begingroup%
  \makeatletter%
  \providecommand\color[2][]{%
    \errmessage{(Inkscape) Color is used for the text in Inkscape, but the package 'color.sty' is not loaded}%
    \renewcommand\color[2][]{}%
  }%
  \providecommand\transparent[1]{%
    \errmessage{(Inkscape) Transparency is used (non-zero) for the text in Inkscape, but the package 'transparent.sty' is not loaded}%
    \renewcommand\transparent[1]{}%
  }%
  \providecommand\rotatebox[2]{#2}%
  \newcommand*\fsize{\dimexpr\f@size pt\relax}%
  \newcommand*\lineheight[1]{\fontsize{\fsize}{#1\fsize}\selectfont}%
  \ifx\svgwidth\undefined%
    \setlength{\unitlength}{450bp}%
    \ifx\svgscale\undefined%
      \relax%
    \else%
      \setlength{\unitlength}{\unitlength * \real{\svgscale}}%
    \fi%
  \else%
    \setlength{\unitlength}{\svgwidth}%
  \fi%
  \global\let\svgwidth\undefined%
  \global\let\svgscale\undefined%
  \makeatother%
  \begin{picture}(1,0.5)%
    \lineheight{1}%
    \setlength\tabcolsep{0pt}%
    \put(0,0){\includegraphics[width=\unitlength,page=1]{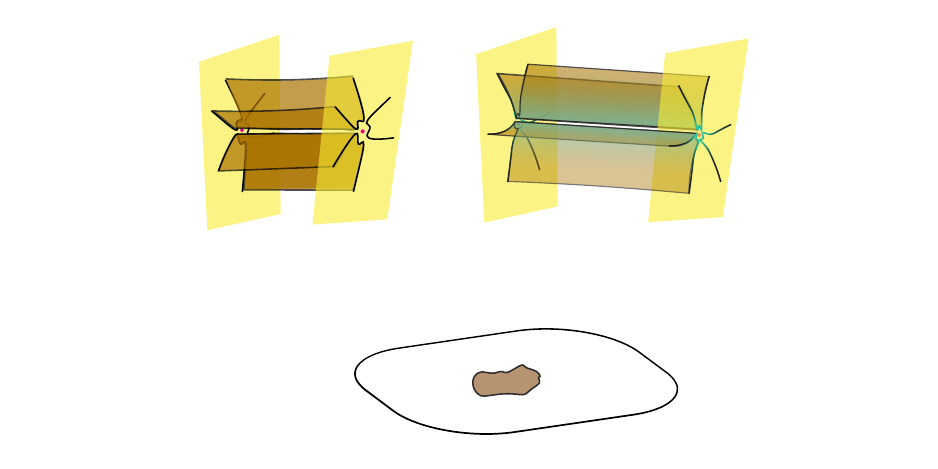}}%
    \put(0.34298157,0.2414182){\makebox(0,0)[lt]{\lineheight{1.25}\smash{\begin{tabular}[t]{l}$\cL_\gamma$\end{tabular}}}}%
    \put(0.71300949,0.24089335){\makebox(0,0)[lt]{\lineheight{1.25}\smash{\begin{tabular}[t]{l}$\cL_{\widetilde{\gamma}}$\end{tabular}}}}%
    \put(0,0){\includegraphics[width=\unitlength,page=2]{comp.pdf}}%
    \put(0.52093801,0.05723058){\makebox(0,0)[lt]{\lineheight{1.25}\smash{\begin{tabular}[t]{l}$L_Y$\end{tabular}}}}%
  \end{picture}%
\endgroup%

 }
    \caption{$\cL_\gamma$ and $\cL_{\widetilde{\gamma}}$ from the bundle perspective}
\end{figure}

\begin{figure}[ht]
    \makebox[\textwidth][c]{
   
        \def\svgscale{1}
\begingroup%
  \makeatletter%
  \providecommand\color[2][]{%
    \errmessage{(Inkscape) Color is used for the text in Inkscape, but the package 'color.sty' is not loaded}%
    \renewcommand\color[2][]{}%
  }%
  \providecommand\transparent[1]{%
    \errmessage{(Inkscape) Transparency is used (non-zero) for the text in Inkscape, but the package 'transparent.sty' is not loaded}%
    \renewcommand\transparent[1]{}%
  }%
  \providecommand\rotatebox[2]{#2}%
  \newcommand*\fsize{\dimexpr\f@size pt\relax}%
  \newcommand*\lineheight[1]{\fontsize{\fsize}{#1\fsize}\selectfont}%
  \ifx\svgwidth\undefined%
    \setlength{\unitlength}{449.17501831bp}%
    \ifx\svgscale\undefined%
      \relax%
    \else%
      \setlength{\unitlength}{\unitlength * \real{\svgscale}}%
    \fi%
  \else%
    \setlength{\unitlength}{\svgwidth}%
  \fi%
  \global\let\svgwidth\undefined%
  \global\let\svgscale\undefined%
  \makeatother%
  \begin{picture}(1,0.72332609)%
    \lineheight{1}%
    \setlength\tabcolsep{0pt}%
    \put(0,0){\includegraphics[width=\unitlength,page=1]{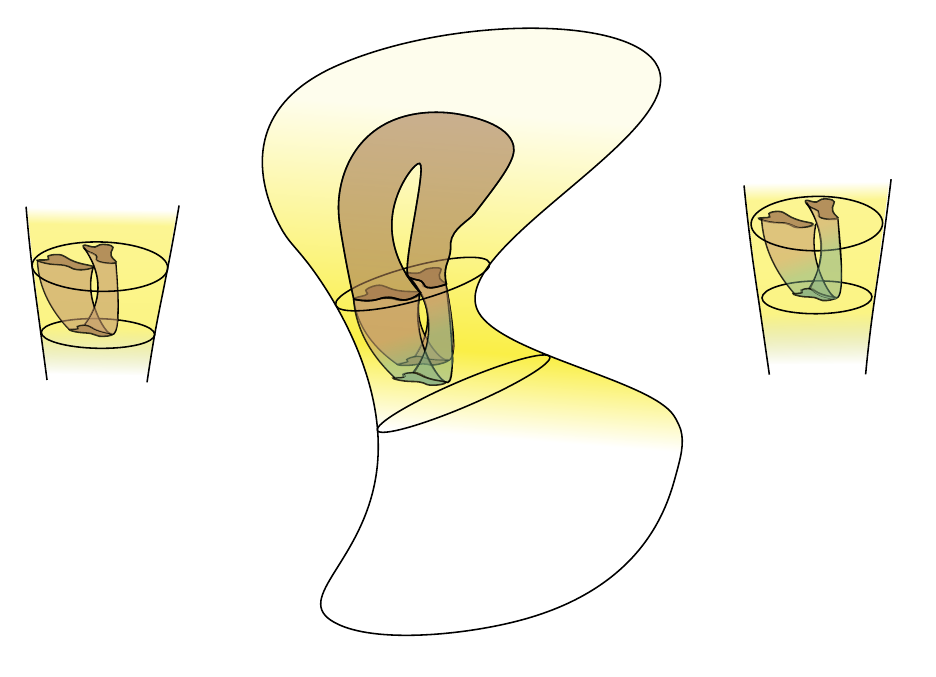}}%
    \put(0.0650115,0.27527389){\makebox(0,0)[lt]{\lineheight{1.25}\smash{\begin{tabular}[t]{l}$\cL_\gamma$\end{tabular}}}}%
    \put(0.85510617,0.29050185){\makebox(0,0)[lt]{\lineheight{1.25}\smash{\begin{tabular}[t]{l}$\cL_{\widetilde{\gamma}}$       \end{tabular}}}}%
    \put(0.51013521,0.10015289){\makebox(0,0)[lt]{\lineheight{1.25}\smash{\begin{tabular}[t]{l}$M$\end{tabular}}}}%
    \put(0.57338965,0.56606857){\makebox(0,0)[lt]{\lineheight{1.25}\smash{\begin{tabular}[t]{l}$L\sm\phi^{-1}(L)$\end{tabular}}}}%
    \put(0.62056333,0.3690561){\makebox(0,0)[lt]{\lineheight{1.25}\smash{\begin{tabular}[t]{l}$BSP(L)$\end{tabular}}}}%
    \put(0,0){\includegraphics[width=\unitlength,page=2]{bspsurg.pdf}}%
  \end{picture}%
\endgroup%

 }
    \caption{Bohr-Sommerfeld-Profile Surgery on $L$}
    \label{fig:bspsurg}
\end{figure}
\noindent We can now define a Bohr-Sommerfeld Profile-surgery.
\begin{defn}\label{def:bspsurg}
    Let $L$ be a Lagrangian with a handle modelled on $\gamma$ with profile $\Lambda \subset Z$. Choose a conjugate path $\wt \gamma$ of the curve $\gamma$. The Bohr-Sommerfeld Profile (BSP) surgery of $L$ with path modelled on $\wt \gamma$ is defined as 
    \begin{equation}
        BSP^{\Lambda}(L) = (L \sm \phi\inv ( L) )\cup _{  \phi(\partial \cL_\gamma) } \cL_{\wt \gamma} 
    \end{equation}
\end{defn}

\begin{defn} [Zero-Area BSP Surgery] \label{def:zeroarea}
    We call a BSP surgery to be of zero-area if the conjugate path $\wt \gamma$ satisfies
    \begin{equation}\label{eq:zeroar}        \int_\gamma \lambda_k = \int_{\wt \gamma} \lambda_k.
    \end{equation}
    We denote the result of such a surgery as $BSP^{\Lambda}_0 (L) \subset M.$
\end{defn}

\begin{defn}\label{def:reebfill}
    We call $\Lambda$ to be \textit{Reeb fillable} if every Reeb chord $c$ with $\Lambda_2$ endpoints can be filled to smooth map from  the half-disk with boundary on both $\cL_\gamma$ for any $\gamma$. In particular, there exists $u_\gamma:\bD^2_+ \to [a,b] \times Z$ such that $\partial u ( S^1_+) \sub \cL_\gamma$ and $\partial u([-1,1])=c$. Here $\bD^2_+$(resp.$ S^1_+$) denotes the intersection of the unit disk (resp. circle) with the closed upper half plane $\bH = \{ (x,y) | y\geq 0\}.$ 
\end{defn}
\begin{remark}
     Note that Reeb fillability is equivalent to the following condition: for every Reeb chord $c$ with endpoints in $\Lambda_2$, there is a smooth punctured disk, i.e. a map from $\bD^2 \sm \{ 1\}$, in $\bR \times Z$ with boundary on $\cL_\gamma$ which is asymptotic to $c$ near the boundary puncture.
\end{remark}

\begin{lemma}\label{ref:mono}
    If $L$ is a monotone Lagrangian in $M$, then a zero-area BSP-surgery with a Reeb fillable Legendrian profile  $BSP_0(L)$ is also monotone.
\end{lemma}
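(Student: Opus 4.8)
The plan is to show that monotonicity is a condition about the ratio of Maslov index to symplectic area on classes in $\pi_2(M, BSP_0(L))$, and that because the surgery is supported inside the embedded piece $\phi([a,b]\times Z)$ where everything is exact, the surgery does not change this ratio. The key point is that $L$ and $BSP_0(L)$ agree outside $\phi([a,b]\times Z)$, and both $\cL_\gamma$ and $\cL_{\wt\gamma}$ are \emph{exact} Lagrangian handles (by the Proposition establishing exactness of $\cL_\gamma$), so the replacement is ``area-neutral'' provided the zero-area condition $\int_\gamma \lambda_k = \int_{\wt\gamma}\lambda_k$ holds; this is exactly what Definition \ref{def:zeroarea} buys us.

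First I would set up a comparison of the two relative homology groups. Since $\cL_\gamma$ and $\cL_{\wt\gamma}$ are both fillings of the same Legendrian $\Lambda_2$ in $\{b\}\times Z$, and $\gamma$, $\wt\gamma$ have the same endpoints and agree near the ends, there is a natural identification of $H_2(M, L; \bZ)$ with $H_2(M, BSP_0(L); \bZ)$: given a class $A$ on the $L$-side, represent it by a map $u$ transverse to $\phi(\{r\}\times Z)$ for a generic $r\in(a,b)$, cut $u$ along this hypersurface into a piece in the ``outside'' region (common to both Lagrangians) and a piece in $\phi([a,r]\times Z)$ capping off on the handle, and then re-glue using the conjugate handle $\cL_{\wt\gamma}$. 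Concretely, the difference between the old cap and the new cap is a cylinder swept out by a family of Reeb-chord-type paths connecting $\cL_\gamma$ to $\cL_{\wt\gamma}$; its symplectic area is governed by $\int_\gamma \lambda_k - \int_{\wt\gamma}\lambda_k$, which vanishes by the zero-area hypothesis, and its Maslov contribution vanishes because the relevant loop of Lagrangian planes is contractible in the annulus-bundle model (the two handles are Lagrangian-isotopic rel the ``outside'', even if not Hamiltonian-isotopic). I would package this as: there is an isomorphism $\Psi: H_2(M,L;\bZ)\to H_2(M, BSP_0(L);\bZ)$ with $\mu(\Psi A)=\mu(A)$ and $\omega(\Psi A)=\omega(A)$ for all $A$.

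Given such a $\Psi$, the conclusion is immediate: monotonicity of $L$ means $\mu(A)=c\,\omega(A)$ for all $A\in\pi_2(M,L)$ with a fixed $c>0$; applying $\Psi$, the same identity holds on $\pi_2(M, BSP_0(L))$ with the same constant, so $BSP_0(L)$ is monotone. (One also checks that $\pi_2(M, BSP_0(L))\to H_2(M, BSP_0(L);\bZ)$ has image detecting monotonicity, which is standard, or one works directly at the level of $\pi_2$ by the same cut-and-reglue argument.) Here I would invoke Remark \ref{rem:ctnyingamma} to note that, after shrinking $[a,b]$, we may take the handle disks to have rational area, which helps ensure the gluing classes behave integrally, though this is a technical convenience rather than the crux.

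The main obstacle is making the cut-and-reglue map $\Psi$ genuinely well-defined on homology and verifying the two invariances rigorously. The area invariance reduces cleanly to Stokes' theorem plus exactness of both handles and the zero-area condition: the area of any disk with boundary on $\overline{\cL_\gamma}$ in the symplectic cut is controlled by the primitive $\widetilde f_\gamma$, and $\widetilde f_\gamma(1)-\widetilde f_\gamma(0)=\int_\gamma\lambda_k$ up to normalization, so equal $\lambda_k$-integrals give equal boundary contributions. The Maslov invariance is the subtler half: one must check that the loop of Lagrangian tangent planes traced out when interpolating between the $\gamma$-cap and the $\wt\gamma$-cap bounds a disk of planes, i.e. has zero Maslov index. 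For this I would use that $\gamma$ and $\wt\gamma$, while representing different \emph{primitive homotopy classes} of paths in $\bC^*$ with fixed endpoints, become homotopic once we allow the endpoints to move along $\bR_{>0}\cup\bR_{<0}$ (or once we pass to the associated $\bC^*$-bundle and use the contractibility of the relevant space of Lagrangian handles rel boundary), so the Maslov defect is trivial — exactly the mechanism underlying the ``relative Hamiltonian isotopy'' discussion and the fact (Proposition above) that $\cL_{\gamma_\tau}$ is a Lagrangian isotopy whenever $\gamma_\tau$ is an isotopy of embedded curves. I would write this out carefully, as it is where the zero-area hypothesis interacts with the topology of $\bC^*$.
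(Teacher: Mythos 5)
Your overall strategy is the same as the paper's: compare each disk (or relative class) on $BSP_0(L)$ with one on $L$ by modifying only the part lying over the handle region, and check that the modification changes neither the symplectic area nor the Maslov index. The area half of your argument is fine — exactness of both handles plus the zero-area condition $\int_\gamma \lambda_k = \int_{\wt\gamma}\lambda_k$ is exactly what the paper uses.

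The gap is in your Maslov step. You justify the vanishing of the Maslov defect by claiming that $\gamma$ and $\wt\gamma$ ``become homotopic once we allow the endpoints to move along $\bR_{>0}\cup\bR_{<0}$,'' and hence that the two handles are Lagrangian-isotopic rel the outside. This is false: by the definition of a conjugate curve, $\gamma\#\wt\gamma^{-1}$ has winding number one in $\bC^*$, and since $\bR_{>0}$ and $\bR_{<0}$ are contractible, any homotopy with endpoints confined to them would force that winding number to vanish — so no such homotopy exists. (Indeed, if the two handles were isotopic rel the ends through Lagrangians with constant $\lambda_k$-integral, the relative-Hamiltonian-isotopy proposition would make the surgery trivial, which is the opposite of what the construction is designed to achieve.) The correct ingredient, which the paper invokes and your argument omits, is the vanishing of the Maslov class on Bohr-Sommerfeld covers (\cite[\S 4.2]{rg19}): this, together with simple-connectedness, is what guarantees that the interpolating homotopy of boundary arcs from $\cL_{\wt\gamma}$ to $\cL_\gamma$ can be chosen with zero Maslov index, even though the two handles are genuinely inequivalent. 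As written, your Maslov-invariance step would fail, so the proof is incomplete until that step is replaced by an argument of this kind.
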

\begin{proof}
    The proof is similar to the proof of \cite[Lemma~2.21]{Cha23}.  We will show that for any disk $u:\bD^2 \to (M,BSP_0(L))$, there is a disk $u':\bD^2 \to (M,L)$ such that the Maslov number and symplectic action of $u,u'$ are same. This will be enough for the proof since we know $L$ is monotone.
    
    Let $[\theta_1,\theta_2]$ be a maximal arc in $\partial \bD^2$ such that $u([\theta_1,\theta_2]) \sub \phi(\cL_{\wt \gamma}).$ By smooth isotopy of $u$ we can assume that there is a closed domain $U_{[\theta_1,\theta_2]} \subset \bD^2 $ containing $[\theta_1,\theta_2]$ such that $\partial U = [\theta_1,\theta_2] \cup \alpha_{[\theta_1,\theta_2]}$  where $\alpha_{[\theta_1,\theta_2]}$ is a smooth curve in $\bD^2$ such that it gets mapped to a $Z$-slice under $u$, i.e., $u(\alpha_{[\theta_1,\theta_2]}) \subset \phi(\{ * \} \times Z)$. Since $Y$ is simply connected, we can isotope $u$ to arrange further that $\alpha_*$ always is a Reeb chord.
    
    We will construct $u'$ by redefining $u$ on $U_{[\theta_1,\theta_2]}$ such that $u'= u$ on $\alpha_{[\theta_1,\theta_2]}$. Note that since Bohr-Sommerfeld covers are Maslov 0 from \cite[\S~4.2]{rg19}, it is enough to check that such a $u'$ can be constructed. The equality of symplectic area of $u$ and $u'$ would then follow from  the exactness of $\cL_{\gamma}, \cL_{\wt\gamma}$ and 
     \begin{equation*}        \int_\gamma \lambda_k = \int_{\wt \gamma} \lambda_k.
    \end{equation*}
   Since we assumed $\Lambda $ was Reeb fillable, we see that such a $u'$ exists.
    
\end{proof}
\subsection{Examples and relations to other surgeries}

The primary motivation behind introducing Bohr-Sommerfeld-Profile surgery is to have a generalized surgery operation on Lagrangians which preserves the Maslov class. We now recover classical Polterovich surgery on Lagrangians from Bohr-Sommerfeld profile surgeries.

\subsubsection{Polterovich Surgery}
The setup for Polterovich surgery (\cite{pol:surg}) starts off with an immersed Lagrangian $L_0$ that has  a single immersed double point $p$. By a version of Darboux's theorem one can arrange a chart $(U,\psi)$ around $p$ such that $f\inv (L_0) = \bR^n \cup i\bR^n.$ Recall that a Polterovich surgery of $L_0$ at $p$ replaces $f\inv( L_0)$ with a Lagrangian handle $S^{n-1}\times [0,1]$ and there are two classes, $H_+, H_-$, of such handles.

\noindent A BSP surgery with $\Lambda= S^{n-1}$, $Y= \bC P^{n-1}, L_Y = \bR P^{n-1}$ is equivalent to changing swapping a class $H_+$ handle in Polterovich-surgery with a class $H_-$ handle.

\subsubsection{ Local Higher Mutation}
If we set $Y = \bC P^{n-1}, L_Y = T_{Cliff} $ and take the Clifford Legendrian tori as $\Lambda = \Lambda_{Cliff}$, a zero area BSP-surgery is equivalent to a local higher mutation as in \cite{Cha23}.

\subsection{Topology of BSP-Surgeries}\label{sec:topsurger}

Assume that the Bohr-Sommerfeld Legendrian  $\Lambda$ is a $k-$fold cover of $L_Y$ via the restriction of the projection map $Z \to Y$.  Then, the  action of the element $e^{i\frac{2\pi}{k}} \in S^1$ on $Z$ restricts to a diffeomorphism (and a deck-transformation over $L_Y$) of $\Lambda$. We denote this diffeomorphism as $\tau$.

\begin{lemma}
    If $\tau : \Lambda \to \Lambda$, the multiplication map defined by $p \mapsto e^{i\frac{2\pi}{k}}\cdot p$ to $\Lambda,$ is homotopic to the identity map, then the BSP-Surgery preserves the homeomorphism class, i.e., $$BSP(L) \cong_{homeo} L.$$
\end{lemma}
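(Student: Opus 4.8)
The plan is to analyze the two fillings $\cL_\gamma$ and $\cL_{\wt\gamma}$ directly as abstract manifolds. Both are built from an admissible curve and its $k$-th root, then parallel transported over $L_Y$; concretely, by Remark~\ref{rem:emb}, each is the image of an embedding $i_{\gamma}\colon [0,1]\times\Lambda \to [\log a,\log b]\times Z$ of the form $i_\gamma(t,p) = (\tfrac1k\gamma_l(t), e^{i\gamma_\theta(t)/k}.p)$. So as a \emph{smooth manifold} each handle is just $[0,1]\times\Lambda$; what differs between $\gamma$ and $\wt\gamma$ is how the two boundary copies $\{0\}\times\Lambda$ and $\{1\}\times\Lambda$ are attached to $L\sm\phi^{-1}(L)$ along $\Lambda_2 = \Lambda \cup e^{i\pi/k}.\Lambda$. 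First I would record that both $\gamma$ and $\wt\gamma$ start and end at the same points (up to the common germs near the endpoints, by the definition of conjugate curve), so the two boundary Legendrian copies are literally the same subset of $\{a\}\times Z$ and $\{b\}\times Z$ in both cases — the gluing loci agree set-theoretically. The only discrepancy is that $\wt\gamma$ differs from $\gamma$ by a loop of winding number one in $\bC^*$, which after taking $k$-th roots means the angular coordinate $\gamma_\theta/k$ is shifted, along the way, by $2\pi/k$. Tracking this through the embedding, the two gluing maps $\{1\}\times\Lambda \to \{b\}\times\Lambda_2$ differ precisely by precomposition with the deck transformation $\tau$ (the restriction of $e^{i2\pi/k}$ to $\Lambda$), while the gluing at the other end is unchanged.

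Thus $BSP(L)$ and $L$ are obtained by the same clutching construction — cut out $\phi(\cL_\gamma)$, glue back $[0,1]\times\Lambda$ — with gluing data differing only by $\tau$ at one end. The hypothesis is exactly that $\tau$ is homotopic to $\ide_\Lambda$. The second step is then a standard clutching/mapping-cylinder argument: if $\tau \simeq \ide$, choose a homotopy $\tau_s$ from $\ide$ to $\tau$ through homeomorphisms of $\Lambda$, and use it to build a homeomorphism of the glued-up piece. Concretely, reparametrize the handle: replace the attaching map $\Lambda \ni p \mapsto (1,p)$ by absorbing the homotopy $\tau_s$ into the collar $[1-\e,1]\times\Lambda$, i.e. define a self-homeomorphism of $[0,1]\times\Lambda$ that is the identity near $\{0\}\times\Lambda$ and equals $(t,p)\mapsto(t,\tau(p))$ near $\{1\}\times\Lambda$, interpolating via $\tau_s$ on the collar. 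Since this self-homeomorphism is the identity near one boundary component and is $\tau$ near the other, gluing $[0,1]\times\Lambda$ with the ``$BSP$'' data but precomposed with this self-homeomorphism is homeomorphic to gluing with the ``$L$'' data. Extending by the identity on $L\sm\phi^{-1}(L)$ — which is legitimate because the self-homeomorphism is the identity near $\{a,b\}\times Z$, hence matches the identity on the complement along the cut locus — produces the desired global homeomorphism $BSP(L)\xrightarrow{\ \cong\ }L$.

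I expect the only genuine subtlety to be bookkeeping: (i) making precise that the two gluing maps really differ by exactly $\tau$ and not by $\tau$ composed with some isotopy coming from the germ data near the endpoints of $\gamma$ versus $\wt\gamma$ — this should follow from the normalization that $\gamma$ and $\wt\gamma$ agree near their endpoints, so the germs of the attaching maps literally coincide and the discrepancy is exactly the monodromy $2\pi/k$ of the winding-number-one loop after taking $k$-th roots; and (ii) checking that the collar interpolation can be arranged to be a homeomorphism at each level, which is automatic since each $\tau_s$ is a homeomorphism of $\Lambda$ and we are only reparametrizing the $\Lambda$-factor fiberwise over $[1-\e,1]$. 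Everything else is the classical fact that clutched/glued manifolds depend only on the isotopy (indeed homotopy, in the topological category, via the Alexander trick–style collar argument) class of the gluing homeomorphism. No transversality or Floer-theoretic input is needed; the statement is purely topological.
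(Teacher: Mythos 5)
Your argument follows the same route as the paper's: the paper's entire proof is the identification you carry out in your first paragraph, namely writing both $L$ and $BSP(L)$ as $L\sm\phi(\cL_\gamma)$ glued to the abstract handle $\Lambda\times\interval$, with the gluing at the $t=1$ end twisted by the deck transformation $\tau$ in the $BSP$ case (the paper's displayed identification (\ref{eq:bsptop})). Your verification that the discrepancy is \emph{exactly} $\tau$ (winding number one of $\gamma\#\wt\gamma\inv$, divided by $k$ after taking $k$-th roots, with the germs at the endpoints literally agreeing) is done in more detail than in the paper, and is correct up to the harmless ambiguity $\tau$ versus $\tau\inv$.

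The one place where your write-up overreaches is the concluding clutching step. The collar absorption you describe needs a path $\tau_s$ \emph{in} $\Homeo(\Lambda)$ from $\ide$ to $\tau$ --- an isotopy --- since each slice $(t,p)\mapsto(t,\tau_{\beta(t)}(p))$ must be invertible; the hypothesis of the lemma only provides a homotopy through continuous maps, and your phrase ``choose a homotopy $\tau_s$ from $\ide$ to $\tau$ through homeomorphisms'' silently promotes one to the other. Your parenthetical claim that the glued homeomorphism type depends only on the \emph{homotopy} class of the gluing map ``via the Alexander trick--style collar argument'' is not a correct general principle: cutting along a two-sided hypersurface and regluing is governed by the isotopy (or at best concordance/pseudo-isotopy) class of the regluing homeomorphism, and homotopic homeomorphisms need not be isotopic. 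To be fair, the paper is equally terse at this point --- it simply asserts that the lemma follows from (\ref{eq:bsptop}), and its next proposition in fact argues that $\tau$ is \emph{isotopic} to the identity --- so the intended input is really an isotopy. To make your proof airtight, either run it under the hypothesis that $\tau$ is isotopic to $\ide_\Lambda$, or supply an argument, special to the $\Lambda$ at hand, that upgrades the given homotopy to an isotopy; everything else in your proposal is fine.
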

\begin{proof}
We can write any Lagrangian $L$ with BSP-Handle as $$L\sm \phi(\Lambda \times \interval) \bigcup (\Lambda \times \interval)/\sim, $$ where $(p,i) \sim \phi(p,i)$ for $i=0,1$. From the construction of the BSP-Surgery, we see that 
\begin{equation} \label{eq:bsptop}
BSP(L) \cong_{\text{Homeo}} L\sm \phi(\Lambda \times \interval) \bigcup (\Lambda \times \interval)/\sim, 
\end{equation}where $(p,0) \sim \phi(p,0)$ and $(p,1) \sim \phi(\tau(p),1)$. 
The Lemma then  follows from (\ref{eq:bsptop}).
\end{proof}

\begin{prop}
    If the second cohomology group of the Lagrangian $L_Y$ is torsion free and  $\Lambda \cong_{homeo} L_Y$, then Bohr-Sommerfeld-surgery  preserves the homeomorphism class, i.e.,
    $$BSP(L) \cong L$$
\end{prop}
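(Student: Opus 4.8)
The plan is to reduce to the preceding lemma: it suffices to show that the deck transformation $\tau$ --- the restriction to $\Lambda$ of the rotation $e^{i\frac{2\pi}{k}}\colon Z\to Z$ --- is homotopic to $\mathrm{id}_\Lambda$. I first pin down $\tau$ by trivializing the circle bundle over $L_Y$. Since $L_Y$ is Lagrangian, $d\lambda$ vanishes on $L_Y$, so $Z|_{L_Y}\to L_Y$ is a flat principal $S^1$-bundle; its Euler class is therefore torsion in $H^2(L_Y;\Z)$ (the image of the Euler class in $H^2(L_Y;\R)$ is represented by the curvature, which vanishes). As $H^2(L_Y;\Z)$ is torsion free this class is zero, so $Z|_{L_Y}\cong L_Y\times S^1$ as principal $S^1$-bundles, and since such an isomorphism is automatically equivariant the trivialization carries fiberwise rotation to rotation of the $S^1$-factor.

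Under this identification $\Lambda$ is a submanifold of $L_Y\times S^1$, invariant under the $\Z_k$ generated by $e^{i\frac{2\pi}{k}}$ and projecting $k\colon 1$ onto $L_Y$. The hypothesis $\Lambda\cong_{\mathrm{homeo}}L_Y$ makes $\Lambda$ connected, so this $\Z_k$-action is the (free) deck group of $\Lambda\to L_Y$; dividing it out exhibits
\[
\Lambda=\{(x,w)\in L_Y\times S^1 : w^k=g(x)\}
\]
for some continuous $g\colon L_Y\to S^1$, with $\tau(x,w)=(x,e^{i\frac{2\pi}{k}}w)$. Thus $\Lambda$ is the pullback along $g$ of the standard connected $k$-fold cover $S^1\xrightarrow{(\cdot)^k}S^1$ and $\tau$ merely rotates the fiber coordinate.

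It remains to homotope $\tau$ to $\mathrm{id}_\Lambda$; this is the crux. Rotating $w$ alone pushes $\Lambda$ off itself, so instead one rotates the fiber coordinate while simultaneously moving along the base so as to stay on $\Lambda$: a homotopy is given by $H_s(x,w)=(\sigma_s(x),e^{i2\pi s/k}w)$ once one has a loop $\sigma_s$ of self-maps of $L_Y$ with $\sigma_0=\sigma_1=\mathrm{id}$ and $g\circ\sigma_s=e^{i2\pi s}\,g$; equivalently, the overall rotation of $g$ --- a generator of $\pi_1(\Maps(L_Y,S^1),g)\cong\Z$ --- must lie in the image of $\pi_1(\Maps(L_Y,L_Y),\mathrm{id})$. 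Here the hypotheses re-enter: $\Lambda\cong L_Y$ presents $\Lambda\to L_Y$ as a self-cover, and since $g_*\colon\pi_1(L_Y)\to\Z$ factors through $H_1(L_Y)$ the monodromy $\tau$ acts on $\pi_1(\Lambda)$ by conjugation by a class carried in the base direction; in particular when $\pi_1(L_Y)$ is abelian --- as in all the applications of this paper, e.g.\ for the lifted Vianna tori --- this action is trivial, $\tau_*=\mathrm{id}$ on $\pi_1(\Lambda)$, and an obstruction-theoretic induction over the skeleta of $\Lambda$, using $\Lambda\simeq L_Y$, upgrades this to $\tau\simeq\mathrm{id}_\Lambda$. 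Feeding $\tau\simeq\mathrm{id}$ into the preceding lemma then gives $BSP(L)\cong L$. The genuine difficulty is precisely the production of this homotopy --- equivalently, checking that $\tau$ acts trivially on the homotopy type of $\Lambda$.
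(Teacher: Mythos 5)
Your first half coincides with the paper's proof: the reduction to showing $\tau\simeq\mathrm{id}_\Lambda$, and the triviality of $Z|_{L_Y}$ (flatness plus torsion-freeness of $H^2(L_Y;\Z)$ killing the Euler class) are exactly the steps taken there, and your resulting description $\Lambda=\{(x,w):w^k=g(x)\}$ with $\tau$ acting by fiber rotation is a correct consequence. The gap is in the second half, where the actual content lies. You reduce the crux to producing a loop $\sigma_s$ of self-maps of $L_Y$ with $g\circ\sigma_s=e^{2\pi i s}g$, concede that you cannot produce it, and then settle the question only under the additional assumption that $\pi_1(L_Y)$ is abelian, which is not a hypothesis of the proposition. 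Even in that restricted case the closing step does not hold as stated: knowing $\tau_*=\mathrm{id}$ on $\pi_1(\Lambda)$ does not allow an ``obstruction-theoretic induction over the skeleta'' to conclude $\tau\simeq\mathrm{id}_\Lambda$; the obstructions lie in $H^i(\Lambda;\pi_i(\Lambda))$ and there is no reason for them to vanish unless $\Lambda$ is aspherical (true for the tori appearing in the applications, but again not assumed). Note also that in your route the hypothesis $\Lambda\cong_{homeo}L_Y$ and the triviality of $Z|_{L_Y}$ are never genuinely brought to bear on the homotopy question about $\tau$; they only set up the coordinates.

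The paper closes the argument differently: it identifies $Z|_{L_Y}$ with the circle bundle $\Lambda\times[0,1]/\bigl((p,0)\sim(\tau(p),1)\bigr)$ over $L_Y$, i.e.\ the mapping-torus-type bundle built from $\tau$, and argues that if $\tau$ were not homotopic to the identity this bundle would be nontrivial, contradicting the triviality already established. In other words, the bundle triviality itself is the engine that forces $\tau\simeq\mathrm{id}$, rather than any obstruction theory on $\Lambda$ or a hand-built loop of self-maps of $L_Y$. (The paper's implication is itself stated tersely, but that is the intended mechanism.) So the missing idea in your write-up is to feed the triviality of $Z|_{L_Y}$ back into the homotopy question about $\tau$; as written, your argument proves the proposition only for aspherical $L_Y$ with abelian fundamental group, not in the stated generality.
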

\begin{proof}
    From  Lemma \ref{eq:bsptop} it is enough to show that the restriction of the multiplication by $e^{\frac{i2\pi}{k}}$ map (which we call $\tau$) to $\Lambda$ is isotopic to identity. Indeed, if it were not the case, then the circle bundle $\Lambda \times [0,1] / \sim $ where the equivalence relation $\sim$ is  $(p,0) \sim (\tau(p),1)$ over $L_Y$ will be a non-trivial circle bundle.  We know that the restriction $Z|_{L_Y}$ is a trivial bundle. Indeed, since $H^2(L_Y)$ is torsion-free and the Euler class of $Z|_{L_Y}$ is given by the curvature form $\omega_Y$ which vanishes on $L_Y$, we have that $[\omega_Y]= 0 $ in $H^2(L_Y,\bZ)$. This contradiction implies that $\tau$ is homotopic to identity and thus $$BSP(L) \cong L.$$
\end{proof}

\begin{remark}[BSP Surgeries that change the topological type]
    Note that if we take $L_Y$ to be an even-dimensional real projective space, $\bR P^{2n}$ as a monotone submanifold in $\bC P^{2n}$, then the Bohr-Sommerfeld surgery with profile the real Legendrian sphere $S^{2n}$ in $S^{4n+1}$ changes the topological type. This follows from the fact that the map $\tau$ in Lemma \ref{eq:bsptop} is the antipodal map, which is not isotopic to identity. This is a shadow of a similar phenomenon in Polterovich's surgery : Polterovich-surgeries of two odd-dimensional Lagrangians $L_1, L_2$ admit two distinct smoothings, $L_1 \# L_2 \# P$ and $L_1 \# L_2 \#  Q$. Here $$P= S^{2n} \times S^1, \; \; Q = S^{2n} \times [0,1]/\sim,$$ where the relation $\sim$ is the one which glues the ends using the antipodal map on the spheres. 
    
\end{remark}
\section{Zero Area BSP and Disk Potential} \label{sec:zero-dp}

As we have seen in Section \ref{sec:topsurger}, Bohr-Sommerfeld-Profile surgeries can construct homeomorphic Lagrangians. One of the main application of Bohr-Sommerfeld-Profile (BSP) surgery is to construct exotic symplectic classes of Lagrangians. We will study the effect of BSP surgeries on the disk-potential of oriented compact spin monotone Lagrangians. The disk-potential has been used to distinguish between the infinitely many Vianna tori \cite{Via16,Via17} and their lifts \cite{chw,DTVW}. We provide a change of potential formula for BSP-surgery under certain assumptions, see Assumptions \ref{ass:diskpot}.

\subsection{Background on disk-potential}

Assume $L_Y$ is a compact, oriented, spin monotone Lagrangian in $Y$. Choose $J$ to be a compatible almost complex structure on $Y$. We  denote the moduli space of $J$-holomorphic disks with boundary on $L_Y$ in class $\beta \in H_2(Y,L_Y)$ with one boundary marking by $\cM(\beta, J)$.   Let $\mu$ be the Maslov functional on $H_2(Y,L_Y)$.

For a generic choice of $J$, the moduli space of once boundary marked $J$-holomorphic disks of Maslov number 2 is a compact oriented manifold. There is a natural evaluation map, $ev_\beta$, associated with $\cM(\beta,J)$ which is defined by evaluation at the boundary marking. The \textit{disk potential} $W_{L_Y}$ is a polynomial obtained from counting Maslov 2  $J$-holomorphic disks passing through a generic point on $L_Y$. Choose a basis $(e_1,\dots,e_k)$ of $H_1(L_Y)$.
\[
\begin{aligned}
W_{L_Y} :\;& \mathrm{Hom}\big(H_1(L_Y), \mathbb{C}^*\big) \longrightarrow \mathbb{C}, \\
& (x_1,\dots,x_k) \longmapsto 
\sum_{\substack{\beta \in H_2(Y,L_Y) \\ \mu(\beta)=2}}
(\mathrm{sgn})\, n_\beta \, x^{\partial \beta}.
\end{aligned}
\]
where $(sgn)$ denotes a choice of sign based on the orientation of moduli space of disks,  $n_\beta$ denotes the degree of the evaluation map $ev_\beta$ and $x^{\partial\beta}$ denotes the monomial $\Pi_{1}^k x_i^{b_i}$ where $\partial \beta = b_1e_1 +\dots+b_ke_k.$ 

\noindent Recall that there is a natural pullback map $$p^*:\Hom(H_1(L_Y),\bC^*)\to \Hom(H_1(\Lambda),\bC^*) $$ induced from the projection map $p: \Lambda \to L_Y$. Let $u$ be a $J-$holomorphic disk which has a non-zero contribution to $W_{L_Y}$, then for $v=\partial u$, the polynomial 
\begin{equation}\label{eq:movedpot}
x^{-v} W_{L_Y} :
\mathrm{Hom}\big(H_1(L_Y), \mathbb{C}^*\big)
\longrightarrow \mathbb{C}
\end{equation}
\[
\text{lifts to}
\]
\begin{equation}
W_\Lambda :
\mathrm{Hom}\big(H_1(\Lambda), \mathbb{C}^*\big)
\longrightarrow \mathbb{C}.
\end{equation}
\noindent This follows from the fact that if $v'$ is another cycle bounding a Maslov two disk, then the cycle $v' - v $ has a horizontal lift\footnote{Here we are using the monotonicity of our set up.} to a cycle in $\Lambda$. We call $W_\Lambda$ the augmentation polynomial of the lifted Legendrian $\Lambda$. See \cite[\S 2.1]{bcsw3} for results relating the augmentation polynomial with the Chekanov-Eliashberg algebra, $CE(\Lambda),$ of the Legendrian $\Lambda.$

\begin{example}
    The disk-potential of the Clifford torus $T^n_{Cl} \hookrightarrow \bC\bP^n$ is the polynomial $$W_{Cl}(x_1,\dots,x_n) = x_1 + x_2 + \dots x_n + \frac{1}{x_1\dots x_n}. $$ The corresponding augmentation polynomial of the Clifford Legendrian torus $\Lambda_{Cl} \hookrightarrow S^{2n+1}$ is obtained as follows,
\[
\begin{aligned}
\left(\frac{1}{x_1 \cdots x_n}\right)^{-1} W_{Cl}
&= 1 + x_1^2 x_2 \cdots x_n 
   + x_1 x_2^2 \cdots x_n 
   + \dots, \\
\text{set } y_k \text{ such that } 
\quad p_*(y_k) &= x_1 \cdots x_k^2 \cdots x_n, \\
\text{then} \quad
W_{\Lambda_{Cl}} &= 1 + y_1 + \dots + y_n.
\end{aligned}
\]
Here $y_k$ are cycles in $T_{Cl}$ that lift to a basis of the Legendrian tori.
\end{example}

\begin{example}
The disk-potential of the lifted $T_{(1,1,2)}$ tori is given by $$W_{\ol T_{(1,1,2)}}(x_1,\dots,x_n) = x_2 + x_3+\dots +x_n+ \frac{(1+x_1)^2}{x_1x_2^2x_3\cdots x_n},$$ see \cite[Example 4.2]{chw}.
We see that $$W_{\Lambda_{(1,1,2)}} = (y_2 + \dots + y_n + (1+y_1)^2).$$ 
\end{example}

\begin{defn}[Regular Almost Complex Structure for Disk-Counting]
    We call an almost complex structure $J$ on $Y$ to be regular for disk-counting in $L_Y$ if the moduli space of $J-$holomorphic disks with boundary on $L_Y$ is regularly cut out for all classes of Maslov index less or equal to two. 
\end{defn}


\begin{assum}\label{ass:diskpot}
We state the assumptions on the Lagrangian $L$ which are essential for our proof strategy :
\begin{enumerate}[{A}.1]
    \item $L$ has minimal Maslov number 2, is compact, oriented, spin and monotone.
      \item \label{p3} $\Lambda$ is a Bohr-Sommerfeld lift of a Lagrangian $L_Y$ that has minimal Maslov number 2.
    \item The first homology has the following splitting $H_1(L) = i_*H_1(\Lambda \times \{0\}) \oplus \bZ\langle \zeta \rangle \oplus R$ where $i$ is the inclusion of the BSP Handle, $\zeta$ is Poincare-dual to $\Lambda\times \{ 0\}$ and $R$ is any torsion free abelian group. \label{spl}
    \item  $L\sm \phi(\Lambda \times \interval)$ is connected.
    \item $L$ allows a zero-area BSP surgery
    \item \label{p5}The deck-transformation $\tau$ in (\ref{eq:bsptop}) is homotopic to identity, thus BSP-surgery preserves the homeomorphism type of $L$.
\end{enumerate}
\end{assum}

\begin{remark}
One can recover a similar disk potential relation by relaxing  A.\ref{p5} above which allows the for the homeomorphism class of the Lagrangian to change while preserving the splitting property in A.\ref{spl}. We do not have applications of such a result in our article so we avoid pursuing such a result.
\end{remark}

\noindent For the remainder of the section we will suppose that the Lagrangian $L$, on which we perform Zero-BSP surgery, satisfies Assumption \ref{ass:diskpot}. Notice that if $L$ satisfies all the assumptions in Assumptions \ref{ass:diskpot}, the zero-area BSP surgery, $BSP_0(L)$ also satisfies them.

\subsection{Convention for first homology}
We  select a splitting of $H_1(BSP_0(L))$ corresponding to the splitting in A.2 which  will be used to state the change in disk-potential formula. The $\zeta$ component in the splitting A.2 is of main importance. By a slight abuse of notation, let $\zeta : S^1 \to L$ be the cycle representing $\zeta$ such that it intersects $i(\Lambda \times \{1 \})$ at $p$, say $\zeta (\theta_p)  =  p .$ Note
\begin{equation}\label{eq:surghom}
    H_1(BSP_0(L)) = i_*(H_1(\Lambda \times \{0\}) \oplus  \bZ\langle \widetilde\zeta \rangle  \oplus R,
    \end{equation}

where $i$ is the inclusion of the BSP Lagrangian handle, $\widetilde \zeta$ is the cycle obtained from $\zeta$ in $L$ by attaching a path between $\tau (p)$ to $p.$ More precisely, note that $\zeta : S^1\sm \{ \theta_p \} \to L$ uniquely determines a path $\widetilde\zeta : S^1\sm \{ \theta_p \} \to BSP_0(L)$ whose left and right-handed limits at $\theta_p$ are $p$ and $\tau (p).$ We extend $\widetilde\zeta$ to a cycle by attaching a path $\psi: [0,1] \to \Lambda $ from $p$ to $\tau p.$ We call this path $\psi$  a capping path.

\begin{figure}[ht]
    \makebox[\textwidth][c]{
   
        \def\svgscale{1}
\begingroup%
  \makeatletter%
  \providecommand\color[2][]{%
    \errmessage{(Inkscape) Color is used for the text in Inkscape, but the package 'color.sty' is not loaded}%
    \renewcommand\color[2][]{}%
  }%
  \providecommand\transparent[1]{%
    \errmessage{(Inkscape) Transparency is used (non-zero) for the text in Inkscape, but the package 'transparent.sty' is not loaded}%
    \renewcommand\transparent[1]{}%
  }%
  \providecommand\rotatebox[2]{#2}%
  \newcommand*\fsize{\dimexpr\f@size pt\relax}%
  \newcommand*\lineheight[1]{\fontsize{\fsize}{#1\fsize}\selectfont}%
  \ifx\svgwidth\undefined%
    \setlength{\unitlength}{450bp}%
    \ifx\svgscale\undefined%
      \relax%
    \else%
      \setlength{\unitlength}{\unitlength * \real{\svgscale}}%
    \fi%
  \else%
    \setlength{\unitlength}{\svgwidth}%
  \fi%
  \global\let\svgwidth\undefined%
  \global\let\svgscale\undefined%
  \makeatother%
  \begin{picture}(1,0.5)%
    \lineheight{1}%
    \setlength\tabcolsep{0pt}%
    \put(0,0){\includegraphics[width=\unitlength,page=1]{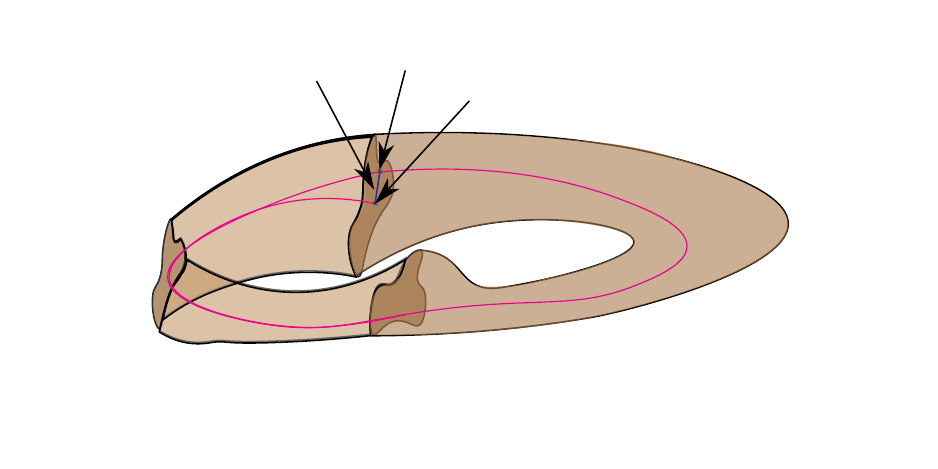}}%
    \put(0.32719152,0.42015634){\makebox(0,0)[lt]{\lineheight{1.25}\smash{\begin{tabular}[t]{l}$\psi$\end{tabular}}}}%
    \put(0.4265773,0.43243998){\makebox(0,0)[lt]{\lineheight{1.25}\smash{\begin{tabular}[t]{l}$p$\end{tabular}}}}%
    \put(0.49972087,0.40228923){\makebox(0,0)[lt]{\lineheight{1.25}\smash{\begin{tabular}[t]{l}$\tau(p)$\end{tabular}}}}%
    \put(0,0){\includegraphics[width=\unitlength,page=2]{homcyc.pdf}}%
    \put(0.17392518,0.39544947){\makebox(0,0)[lt]{\lineheight{1.25}\smash{\begin{tabular}[t]{l}$\zeta$\end{tabular}}}}%
    \put(0.24811559,0.41164154){\makebox(0,0)[lt]{\lineheight{1.25}\smash{\begin{tabular}[t]{l}$\widetilde{\zeta}$\end{tabular}}}}%
  \end{picture}%
\endgroup%

 }
    \caption{Bohr-Sommerfeld-Profile Surgery on $L$}
\end{figure}

\begin{conv}\label{conv:homo}
    We will choose the path $\psi(t)$ to be equal to $l(1-t)$ where $l$ is the lift of the boundary $\partial u,$ of a Maslov 2 disk $u$ which has non-zero contribution in the disk potential $W_{L_Y}$.
\end{conv}
\subsection{Main theorem}
We can now state the main theorem of the article, which is a precise version of Theorem \ref{mainthmintro}. 
\begin{Theorem}[Wall-crossing formula for Zero-Area BSP]\label{mainthm}
    Let $L$ be a Lagrangian satisfying Assumptions \ref{ass:diskpot}. Choose a splitting of the first homology of $L$ as in property A.2 and choose a corresponding splitting of the first homology of $BSP_0(L)$ following Convention \ref{conv:homo}. Let $W_\Lambda$ denote the augmentation polynomial by taking the lift of $x^{-\del u} W_{L_Y}$ where $u$ is the Maslov-2 disk whose boundary is used as capping path in Convention  \ref{conv:homo}. Then we have the following wall-crossing formula for disk-potential,
    \begin{equation}\label{mainform}
        W_{BSP_0(L)}(x_1,\dots,x_k,z,w_1,\dots,w_l)
 =W_L(x_1,\dots,x_k,zW_{\Lambda}(x_1,\dots,x_k),w_1,\dots,w_l),     \end{equation}
 where the variables $x_1,\dots,x_k$ correspond to $\Hom(i_*(\Lambda \times \interval),\bC^*)$, $z$ corresponds to $\Hom(\bZ \langle \zeta \rangle, \bC^*)$ or $\Hom(\bZ \langle \zeta \rangle, \bC^*)$ and $w_i$ correspond to $\Hom(R,\bC^*).$
\end{Theorem}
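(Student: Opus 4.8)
The plan is to prove the wall-crossing formula by an SFT-neck-stretching argument that relates Maslov-2 holomorphic disks with boundary on $BSP_0(L)$ to those with boundary on $L$, tracking the extra boundary windings around the BSP handle. First I would fix a compatible almost complex structure $J$ on $M$ that is cylindrical (of the form $J_c$) on the embedded collar $\phi([a,b]\times Z)$, chosen so that $J_Y$ on $Y$ is regular for disk-counting and realizes the ERD property (Assumption A.6); this is possible by a standard transversality argument since the relevant moduli spaces are cut out for Maslov index $\le 2$. The key geometric input is that both $\cL_\gamma$ and $\cL_{\wt\gamma}$ are exact, parallel-transport fillings of the \emph{same} Legendrian $\Lambda_2$, and that $\gamma,\wt\gamma$ agree near their endpoints and differ by winding number one in $\bC^*$; the zero-area condition \eqref{eq:zeroar} guarantees the two handles carry the same action filtration so monotonicity and the Maslov-2 bound are preserved under the identification (using Lemma \ref{ref:mono}).

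The main step is the moduli-space comparison. A Maslov-2 disk $u$ for $BSP_0(L)$ with boundary passing through a generic point either (i) stays away from the handle, in which case it is literally a disk for $L$ contributing the same monomial, or (ii) enters $\phi([a,b]\times Z)$. In case (ii) I would stretch the neck along $\{r\}\times Z$: the disk breaks into a piece in the capping/filling region $M\sm\phi(L)$ glued to a collection of punctured holomorphic curves in the symplectization $\bR\times Z$ asymptotic to Reeb chords/orbits of $\Lambda_2$. Because $\pi:\bR\times Z\to Y$ is $(J_c,J_Y)$-holomorphic, each symplectization piece projects to a holomorphic disk (or sphere) in $(Y,L_Y)$, and Maslov/area constraints plus the ERD property force these to be exactly the disks counted by $W_{L_Y}$, i.e. by $W_\Lambda$ after the normalization $x^{-\partial u}W_{L_Y}$. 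The combinatorics of how many times $\partial u$ winds through the handle — encoded by the $\zeta$-variable — is precisely where the substitution $z\mapsto zW_\Lambda(x_1,\dots,x_k)$ comes from: each pass through the handle that \emph{could} have gone the ``other way'' along $\wt\gamma$ rather than $\gamma$ contributes a factor of $W_\Lambda$, because the difference of the two homotopy classes of handle paths is realized by capping with the boundary of a Maslov-2 disk of $L_Y$ (Convention \ref{conv:homo}). Conversely, a gluing argument shows every such configuration arises from a genuine $BSP_0(L)$-disk, and the signs match by the orientation conventions on the moduli spaces and the fact that $\tau$ is homotopic to the identity (Assumption A.5), so the handle-regluing does not twist orientations.

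Concretely I would organize this as: (1) set up the neck-stretching family $J_R$ and invoke the SFT compactness theorem to describe limits; (2) classify the limiting buildings for Maslov index exactly $2$, showing the only nonconstant symplectization components are multiple covers of trivial cylinders over Reeb chords together with ``thin'' disks projecting to Maslov-2 disks of $L_Y$; (3) identify the main-level piece with a Maslov-2 disk of $L$ after collapsing the handle, with boundary homology class shifted according to the capping path $\psi=l(1-\cdot)$; (4) run the gluing theorem to get a bijection (up to sign) between $BSP_0(L)$-disks and $L$-disks decorated by a tuple of $\Lambda_Y$-disks through the handle; (5) translate this bijection into the generating-function identity, where summing over the decorating disks produces the geometric-series-type substitution $z\mapsto zW_\Lambda$, and the variables $x_i$ (handle directions) and $w_j$ ($R$-directions) are untouched since the handle and its complement are glued along the fixed Legendrian $\Lambda_2$. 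The hard part will be step (2)–(4): ruling out unexpected sphere or disk bubbling in the symplectization, controlling multiply-covered Reeb-orbit asymptotics (which is where simple-connectivity of $Z$ and the vanishing of the Maslov class on Bohr-Sommerfeld covers, \cite[\S4.2]{rg19}, are essential), and verifying that the gluing is orientation-preserving with the stated sign conventions. I expect the transversality and orientation bookkeeping — rather than the conceptual picture — to be the real obstacle.
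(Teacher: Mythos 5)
Your proposal follows essentially the same route as the paper's proof: an SFT neck-stretching that isolates the BSP handle in a symplectization level $\bR\times Z$, classification of the limiting buildings via the holomorphic projection to $(Y,L_Y)$ together with the ERD property, a gluing bijection between rigid broken and unbroken disks, and boundary-class bookkeeping that produces the substitution $z\mapsto zW_\Lambda$. The only implementation differences are that the paper performs a double stretching along $\phi(\{a,b\}\times Z)$ so that the handle piece genuinely lies in $\bR\times Z$ (where the model bundle pair / Riemann--Hilbert analysis replaces your ``thin disk'' heuristic), and it routes the comparison through a separately proved equality of the broken disk potential with the usual disk potential; these are the same ideas you outline.
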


\begin{example}[Lagrangian Disk-Surgery]\label{ex:disksurg}
    When we perform a zero-BSP surgery with profile $\Lambda = S^1 = S^3 \cap \bR^2 \hookrightarrow S^3$ where we view $S^3$ as a unit sphere in $\bC^2$, $Y= \bC P^1$, $L_Y = \bR P^1$, our wall-crossing formula matches those which are already in the literature, see \cite{Aur07,PW19,ch10}. Indeed, notice that performing such a zero area BSP is equivalent to performing a Lagrangian disk-surgery and the change of potential is given by $$W_{BSP_0(L)}(x,y,w_1,...) = W_L (x,y(1+x),w_1,...),$$ where the augmentation polynomial of the Legendrian $S^1 \sub S^3$ is given by  $ W_{\Lambda} (x) = (1+x)$.
\end{example}
\begin{example}[Local Higher Mutations] \label{ex:highermut}
Performing a zero-BSP surgery with the Clifford Legendrian profile $\Lambda = \Lambda_{Cl}, Y=\bC P^{n-1}, L_Y = T^{n-1}_{Cl}$ is equivalent to performing a local higher mutation, see \cite{PT20,Cha23}. The wall-crossing formula in equation (\ref{mainform}) matches with the results already in the literature,
    $$W_{BSP_0(L)}(x_1,\dots,x_{n-1}, z, w_1,\dots) = W_L(x_1,\dots,x_{n-1},z(1+x_1 + \dots x_{n-1}), w_1,\dots)).$$
\end{example}
\begin{example}[BSP with lifted Vianna tori profiles]
A novel application of Theorem \ref{mainthm} is understanding how performing zero-area BSP-surgery with profile $\Lambda=\Lambda_{\abc}$,  $Y=\bC P^{n-1}, L_Y = T^{n-1}_{\abc}$ where $T^{n-1}_\abc $ is a lifted Vianna tori as described in \cite{chw}. See \S \ref{sub:exonvia} for further details.
    
\end{example}

\section{Technical SFT Results}\label{appendix}

\subsection{Background on Neck-Stretching} \label{sftbackground}

We will discuss the necessary background for neck-stretching and holomorphic buildings or broken holomorphic disks. This treatment is very similar to the set up in \cite{floflip,Cha23} and should be treated as  a quick recap. Refer to \cite{BEH03,floflip,CM05,Abbasbook} for more details about neckstretching and SFT compactness.  

\begin{assum}\label{assneck}[Neck-stretching assumptions]
Let $(M,\omega)$ be a compact symplectic manifold with a coisotropic separating hypersurface $Z$ with a neighborhood embedding $\phi: [a,b]\times Z$ of a truncated symplectization where   $(Z,\lambda) \to (Y,\omega_Y)$ is a prequantization bundle. Let $L$ be a compact Lagrangian such that $L \cap \phi ((b-\epsilon,b]\times Z)  = \Lambda_2$ for a Bohr-Sommerfeld lift $\Lambda$ in $Z$. Let $J$ be a compatible almost complex structure on $M$ such that under $\phi$ it is translation invariant in the $\bR$ coordinate. Moreover assume that the restriction of  $J$ to $\ker \lambda$ in $\phi((a,b)\times Z)$ is compatible with $\pi_Y^* \omega_Y$ and $J(\dd{s}) = R$ where $s$ is the $\bR$ coordinate and $R$ is the Reeb field on $Z$. We denote the space of Reeb chords with end points mapping to the Legendrian $\Lambda_2$ as $\cR(\Lambda_2)$

\end{assum}

We will perform a simultaneous neck-stretching at $\phi(\{a',b'\} \times Z )$, two disjoint coisotropic embedding of $Z$.  

\begin{defn}[Double-stretched manifold]
Given Assumption \ref{assneck}, choose $a'>a$ and $b'<b$ and $\epsilon >0$ such that $(a'-\epsilon , a'+\epsilon) \cup (b'-\epsilon, b'+\epsilon) \subset (a,b)$    A T-double stretched manifold $M^T$ is defined as 
\begin{equation*}
    M^T= M\sm \phi (((a'-\epsilon , a'+\epsilon) \cup (b'-\epsilon, b'+\epsilon)) \times Z) \bigcup_{\sim^1} ([-T,T] \times Z )\bigcup_{\sim^2}( [-T,T] \times Z  )
\end{equation*}
where $\sim^1$ is the pair of identifications $\phi(
\{a'-\epsilon\}\times Z
) = \{ -T\} \times Z$
and $\phi(
\{a'+\epsilon\}\times Z
) = \{ T\} \times Z$ and $\sim^2$ is the similar pair of identifications at $\{b'\pm \epsilon \} \times Z$. We define a T-double-stretched almost complex structure $J^T$ by extending the cylindrical almost complex structure naturally to $[-T,T] \times Z$.

\end{defn}

\noindent We define a counterpart of broken manifolds as defined in \cite{floflip}. The main difference is that we perform simultaneous stretching. 

\begin{defn}[Broken Manifold]
Define $\bM [l,k]$ to be the  collection of manifolds 

$$\widehat{M}_\outm, \underbrace{M_{-l}:=\bR \times Z ,\cdots, M_{-1}:=\bR \times Z}_{l \text{ times}} , {M}_\inn := \bR \times Z , \underbrace{M_1 := \bR \times Z ,\cdots,M_k := \bR \times Z}_{k \text{ times}}, \widehat{M}_\outp  $$
where $\widehat{M}_\out$ is obtained by attaching natural symplectization ends to $M \sm \phi([a,b] \times Z).$ The manifolds $\bR \times Z$, between $M_\inn$ and $\M_{out\pm}$  are called the neck pieces. If they are above or below the inside piece $M_\inn$, they are referred to as lower or upper neck pieces.

In case $Z$ was a separating hypersurface, we call the two disjoint components of $\widehat{M}_\out$ to be $\widehat{M}_\outm$ and $\widehat{M}_\outp$ depending on which end of symplectization is added.
\end{defn}

\begin{remark}
    Note that we use two indices to keep track of neck pieces because we are performing stretching at two regions simultaneously.

\end{remark}
\begin{remark}
    
It is sometimes helpful to consider the broken manifolds as collection of compact manifolds obtained by attaching the divisor $Y$ at the ends of each piece. Thus, $\widehat{M}_{out \pm}$ are compactified to manifolds obtained by performing symplectic cut at $\{a\} \times Z$ and $\{b\} \times Z$. Similarly $\widehat{M}_\inn$ is compactified to a $\bP^1$ bundle over $Y$ obtained by projectvizing the bundle $E\oplus \bC$ where $E$ is the associated complex bundle on $Y$ with the circle bundle $Z$. The compactified pieces in a broken manifold are notated with an overline, e.g. the compactified inner piece is $\overline{M}_\inn$.
\end{remark}

\begin{defn}[Broken and stretched Lagrangian]
    If $L$ is a Lagrangian with BSP handle $\cL_\gamma$ in $\phi([a,b] \times Z)$, we can define a T-stretched Lagrangian by adding in appropriate cylindrical Lagrangians. We can define broken Lagrangians similarly as broken manifolds. Note that the neck regions of broken Lagrangians lie only in the upper neck regions.
\end{defn}

 \begin{defn}[Broken Disk]\label{def:brokdisk}
 
 A broken disk with $k-$necks ( or in the case of double stretching,  $(l,k)-$necks ) $(\mathfrak{u,D})$ is a map from a  nodal disk with marked points $\mathfrak{D}$ to a broken manifold with $k$ (resp. $(l,k)$) necks with the following properties. 
 \begin{itemize}
     \item $\mathfrak{D}$ is a nodal disk with labels $i\in \{\text{out-},-l, \dots,-1,\text{in}=0,1,2,\dots,k,\text{out+}=k+1\}$ such that neighboring disks have labels differing at most by 1. We call the nodes that are between two components with different labels as ``puncture nodes''. We write the nodal disks as  $ \mathfrak{D} = (\bD_\outm,\bD_{-l}\dots, \bD_{in},\bD_1,\dots,\bD_{out+})$ where $\bD$ refers to a collection of nodal disks with punctures. 
     \item  $ \mathfrak{u} = (u_\outm,u_{-l}, \dots , u_{in}, u_1,\dots,u_{out})$ is a map from $\mathfrak{D}$ to $\mathbb{M}[l,k]$ such that  $u_i$ is a map from $\D_i$ to $W_i$ with boundary on $L_i$ and near the puncture nodes of $\mathfrak{D}$ between components of differing label $u_i$ and $u_{i+1}$ asymptote to the same Reeb chord or Reeb orbit. This forces all the domains $\bD_\outm,\bD_{-l},\dots, \bD_{-1}$ to be collections of punctured nodal spheres since the boundary condition is empty in these regions.
     \item (\textit{Stability}) for all $i \in [1,\dots,k], j\in [-l,\dots,-1]$, at least one of the component in each neck $\bD_i$ and $\bD_j$ has to be a non-trivial strip, or have enough markings to make it stable domain.
 \end{itemize}
  \end{defn}
\begin{remark}
    Broken disks are just holomorphic buildings in SFT where the domain is a nodal, we choose this naming convention over usual SFT nomenclature for the sake of brevity.
\end{remark}

Broken disks occur as limits of neck stretching when we let the neck length go to infinity. In the statement of the theorem, by energy $E(u)$ of a holomorphic disk, we mean the symplectic area $\int_u \omega$.  The following compactness theorem has already appeared in the SFT literature in different formats, see \cite{BEH03,Abbasbook,CM05,floflip,cant2022dimension}. For a proof which is enough for our situation, refer to \cite
[\S 3]{Cha23}.
\begin{Theorem}[SFT Compactness]
If $u_n : \D^2 \to (M
^n,J^n,L^n) $ is a sequence of $J^n$ holomorphic disc to the $n-$double-stretched manifolds, with boundary on $L^n$, and with bounded symplectic energy, i.e. $E(u) < E< \infty$, then there is a subsequence of $u_n$ that converges to a stable broken disc.
\end{Theorem}
\begin{proof}
    Although the only difference of this setup from Theorem 3.13 in \cite{Cha23} is the simultaneous stretching of two necks, the proof follows almost verbatim. The only change needed is to keep track of labels of upper and lower neck pieces, which is omitted here. 
    \end{proof}
\subsection{Fredholm Theory for Broken Disks}\label{sec:Freddy}

We will setup the required Fredholm theory to cut out the moduli space of broken disks as a zero set of a Fredholm section. Since we have punctures going to cylindrical ends, we use weighted Sobolev spaces to create Banach spaces of maps.

Let $\Sigma$ be a punctured disk with punctures at $\cE(\Sigma)$. For each puncture $p\in \cE(\Sigma)$ we choose a Reeb chord $\gamma_p$ or orbit depending on whether $p$ is on boundary or interior. Equip $\Sigma$ with cylindrical  holomorphic charts $\phi_p$ (i.e. domain of the chart is $[0,1]\times [0,\infty)$ or $S^1\times[0,\infty)$ ) near each puncture $p$. We define the set of \textit{model maps} as
follows,

\[
\begin{aligned}
\Maps^0(\Sigma, \mathbb{R} \times Z, \mathbb{R} \times \Lambda_2)
= \Big\{ \,
& u \in C^0(\Sigma, \mathbb{R} \times Z)
\ \Big|\ 
u(\partial \Sigma) \subset \mathbb{R} \times \Lambda_2, \\
& u(\phi_p(s,t)) = (k s, \gamma_p(k t))
\ \text{for all } p \in \mathcal{E}(\Sigma)
\, \Big\},
\end{aligned}
\]
\[
\text{where } k \text{ is the action of } \gamma_p.
\]
This set consists of all continuous maps which are equal to trivial strips/cylinders in each of the cylindrical end of $\Sigma$.

Select a cylindrical Riemannian metric $g$ on $\bR \times Z$ and a cylindrical connection $\nabla$ on the tangent bundle.  Equip $\Sigma$ with a 
metric which is of cylindrical form on the charts at each puncture. For every end $p \in \mathcal{E}(\Sigma)$ we will allow Sobolev sections which exponentially decay to parallel transport of sections over the asymptotic Reeb chord or orbit at the puncture, $\eta(p) \in \Omega^0(\gamma_p^*(TZ) \oplus \mathbb{R})$.

\noindent Note that for any $u\in \Maps^0(\Sigma,\bR \times Z , \bR \times \Lambda_2)$ which we write as $(u_\bR, u_Z)$ using the standard projection maps in $\bR\times Z$ , there is a natural splitting $$u^*(T (\bR \times Z)) = \overline{\bR} \oplus u_Z^* TZ $$
where $\overline{\bR}$ is the trivial bundle. Given any section, $\eta(p) \in \Omega^0(\overline{\bR} \oplus \gamma_p^* TZ),$ we can construct a section over the cylindrical chart $\phi_p$ in $\Sigma$ by taking parallel transport $$PT_\nabla(\pi_Z (\eta(p)))\in \Omega^0(u_Z^* TZ)$$ of the $\gamma_p^* TZ$ component along the compactification of the path $u_Z(s,\cdot)$. Recall that from the exponential decay of $u_Z$ to $\gamma_p$ we have the existence of the compactification of the path $u_Z(s,t)$ for every $t.$ Now from the triviality of the bundle $\overline{\bR}$, we have a natural extension of the $\overline{\bR}$ component $\eta(p)$ to the cylindrical chart near $p$. To get a section $ES(\eta(p))$ over $\Sigma$ we multiply an appropriate bump function which is equal to 1 near the puncture $p$ and is 0 outside the cylindrical chart at $p$. 

$$ES(\eta(p))(s,t) = \beta(s)(\pi_\bR (\eta(p)) (t) + PT_\nabla(\pi_Z(\eta(p))) ) \in \Omega^0(u^*(T(\bR \times Z))),$$
where $\beta : [0,\infty) \to \bR$ is a bump function such that $\beta(s)=0 $ for $s<1$ and $\beta(s) =1 $ for $s>2.$

Define the space of sections of a Reeb chord with boundary along Legendrian as follows,
$$
  \Omega^0(\gamma_p^*(TZ) \oplus \mathbb{R}, \partial \gamma_p ^* T\Lambda)  := \{ \eta(p) \in \Omega^0(\mathbb{R} \oplus \gamma_p^* TZ  ) |   \eta(e)(\gamma_p(\{0,1 \})) \subset  \mathbb{R} \oplus T\Lambda_2 \} . 
$$

\noindent From now we assume that any locally $W^{k,p}$ section $\eta$ of $u^*T(\bR \times Z)$ comes with a choice of $\eta(p) \in \Omega(\overline{\bR} \oplus \gamma_p^*{TZ},\partial \gamma_p^* T\Lambda_2) $ for each puncture $p\in \cE(\Sigma).$  For Sobolev constants $k, p$ with $kp>2$ and $u\in \Maps^0(\Sigma, \bR\times Z, \bR \times \Lambda_2)$, consider the weighted Sobolev space

\[
W^{k, p, \delta}(u^*T(\bR \times Z), \partial u^*T\Lambda_2):=\{ \eta\in W^{k, p, \delta}_{{loc}}(u^*T(\bR \times Z))\mid \| \eta\|_{k,p,\delta}< +\infty, \eta|_{\partial \Sigma} \in T\Lambda_2 \}.
\]
Here the Sobolev $(k,p,\delta)$-norm is defined as
\[
\lvert \lvert \eta\rvert \rvert^p_{k,p,\delta}:= \sum_{p'\in \cE(\Sigma)} \lvert \eta(p') \rvert^p + \int_{S} (\lvert \eta - ES(\eta(p'))\rvert^p + \lvert \nabla \eta\rvert^p+ \cdots + \lvert \nabla^{k} \eta\rvert^p)e^{p\kappa^{\delta}}.
\]
where $ES(\eta(p'))$ the section with support in the cylindrical and strip-like end for each puncture $p'$ obtained by taking parallel transport of a section defined above and $\kappa^\delta$ is a function defined such that

\begin{align*}
    &\kappa^\delta_\Sigma : \Sigma \to \R \\
    &\kappa^\delta_\Sigma (z) = 0 \text{ for  } z \in \Sigma \setminus \bigcup_i im(\phi_p) \\
    &\kappa^\delta_\Sigma (\phi_p (s,t)) = \delta \beta( s) \text{ for } p  \in  \cE(\Sigma).
\end{align*}

\noindent We define the Banach manifold of Sobolev $(k,p,\delta)$ maps as follows 
\begin{align*}
\Maps_{k,p,\delta}(\Sigma,\bR \times Z,\bR \times \Lambda_2) = \{ \exp_u(\eta) | u\in \Maps^0(\Sigma,\bR \times Z,\bR \times \Lambda_2),\\ \eta \in W^{k, p, \delta}(u^*T(\bR \times Z), \partial u^*T\Lambda_2) \}
\end{align*}
where the exponentiation is taken with respect to a metric for which $\bR \times \Lambda_2$ is totally geodesic. We use geodesic exponentiation to define charts of the space $\Maps_{k,p,\delta}$ to give it a structure of Banach manifold.

To facilitate the use domain dependent perturbations of the almost complex structures, we will cut out our moduli space of broken disks from a product of space of domains with the space of maps, which we have already described above. This discussion follows the setup of domain dependent perturbations as described in \cite{floflip,PW19,bcsw1}.

By a \textit{combinatorial type} $\Gamma$ of a disk, we mean the data of  interior marked points $\cI$, puncture node $\cE$ , and by $\overline{\cM}_\Gamma$ we denote the compactified moduli space of disk domains of combinatorial type $\Gamma$. The interior marked points will be the points which get mapped to a Donaldson divisor $D$, see Definition \ref{def:div}. We can cover $\cM_\Gamma$ by charts $\cM_{\Gamma_i}$ for $i\in \cA$ such that the universal moduli space $\cU_{\Gamma_i}$ above them admit trivializations. For each chart $\cM_{\Gamma_i}$, we can construct a base space $$\cB_{\Gamma_i}  := \cM_{\Gamma_i} \times \Maps_{k,p,\delta} (\Sigma, \bR \times Z, \bR \times \Lambda_2).  $$
Over $\cB_{\Gamma_i}$ we have a vector bundle $$\cE_{\Gamma_i}|_{(C,u)} = W^{k-1,p,\delta}(\overline{\Hom}(T\Sigma,u^*(T\bR \times Z))) $$ and a section $\cF_{\Gamma_i} : \cB_{\Gamma_i} \to \cE_{\Gamma_i}$ given by $$\cF_{\Gamma_i}(C,u) = \overline{\partial}_{j(C)}(u).$$ We have that $\cF$ is a Fredholm map near its zero locus and denote the linearization at $C,u$ as $\Tilde{D}_{C,u}$. For a fixed $C\in \cM_\Gamma$, we define $D_u$ to be the linearization of the restriction of the map $\cF_\Gamma : \{ C\} \times \Maps_{k,p,\delta}$.

Refer to \cite{wendl:sft,cant2022dimension,Cha23} for proofs of Fredholmness of the linearization. We have a number of evaluation maps from the zero locus of $\cF_{\Gamma}$. 

\[
\begin{aligned}
\mathrm{ev}_{\mathrm{int}} :\;& \mathcal{F}^{-1}(0) \longrightarrow M^{|\mathcal{I}|}, \\
& u \longmapsto \big( u(z_m) \big)_{m \in \mathcal{I}},
\end{aligned}
\]
\[
\text{where } z_m \text{ denotes the } m\text{-th interior marked point.}
\]
We denote $\cM_{\Gamma}(\bR\times Z, \bR \times \Lambda_2, D)$ to be the set $ev_{int}\inv(D^{|I|})$ of holomorphic maps of combinatorial type $\Gamma$ which map the interior marked points to the Donaldson divisor by taking the union over all of the trivializing charts $\cM_{\Gamma_i}$. We similarly define moduli space of holomorphic disks in the inner and outer piece. We have an evaluation map to the starting point of Reeb chord/orbit at each puncture
\[
\begin{aligned}
\mathrm{ev}_{\mathrm{rc}} :\;& \mathcal{F}^{-1}(0) \longrightarrow \Lambda_2^{|\mathcal{E}|}, \\
& u \longmapsto \big( \gamma_e(0) \big)_{e \in \mathcal{E}},
\end{aligned}
\]
\[
\text{where } \gamma_e \text{ is the Reeb chord/orbit at the puncture } e.
\]

\noindent Given combinatorial types for each level of a broken disk with $(l,l')$ levels, $$ \overline{\Gamma}=(\Gamma_\outm,\Gamma_{-l},\dots,\Gamma_\in,\Gamma_1,\dots,\Gamma_{l'},\Gamma_\outp),$$ the moduli space of broken disks of combinatorial type $\overline{\Gamma}$ is the fiber product over Reeb chord/orbit evaluation $$\cM_{\overline{\Gamma}}(\bM[l,l']) = \cM_{\Gamma_\outm}(M_\outm)\times_{ev_{rc}}\dots \cM_{\Gamma_i}(M_i) \times_{ev_{rc}} \dots  $$
where we have dropped the notation for Lagrangian boundary and interior markings going to Donaldson divisors just to avoid clutter.
\begin{defn}[Virtual dimension]
For a broken a disk $(\mathfrak{u,D})$, the virtual dimension is given by the formula $$\vir (\mathfrak{u}) = \Ind(\wt D_\mathfrak{u}) - N - \dim \aut(\mathfrak{D})$$ where the contribution $N$ comes from taking fiber product at the puncture nodes.
\end{defn}

\begin{defn}[Rigid broken disk]
We call a broken disk $\mathfrak{u}$ rigid if $\vir(\mathfrak{u}) = 0$. 
\end{defn}

In our setup we will be concerned with counting rigid broken disks which pass through a point fixed in $L_\outp.$ We call a broken disk to be \textit{once boundary marked} if there is exactly one marking on the boundary of $\bD_\outp.$ Fix $p\in L_\outp$, a once boundary marked broken disk with a point constraint $p$ would be the moduli space of once marked broken disks which map the boundary marking to $p$. The virtual dimension $\vir_{const}$ of this moduli space is $\vir - \dim(L)$ where the $-\dim(L)$ is the contribution from constraining the marked point to map to $p$.
\begin{defn}[Rigid broken disk with a point constraint]
    A once marked rigid broken disk $\mathfrak{u}$ with point constraint $p$ is called rigid if $\vir_{const} = 0.$
\end{defn}

\begin{defn}[High-index broken disk]
We call a point constrained broken disk $\mathfrak{u}$ high-index if $\vir_{const}(\mathfrak{u}) > 0$.

\end{defn}

Given combinatorial type $\overline{\Gamma}$ of a broken disk, we define the stabilized type $\Gamma^{st}$ by forgetting all the pieces that are not stable along with the puncture nodes that connected the unstable piece to a stable piece. If removing such a puncture node renders an otherwise stable piece into an unstable piece, we forget that as well in $\Gamma^{st}$.

We will now define the notion of domain dependent perturbation of almost complex structure. Let $\cM_{\overline{\Gamma}}$  be the moduli space of nodal disk with combinatorial type $\overline{\Gamma}$. Let $\mathcal{U}_{\overline{\Gamma}} \xrightarrow{\wt\pi_{\overline{\Gamma}}} \cM_{\overline{\Gamma}}$ be the universal moduli space whose fiber over $S\in \cM_{k}$ is the set of pairs $(S,z)$ where $z$ is a point on $S$.  Fix an almost complex structure $ J_{fix}$ on each piece of a broken manifold obtained by neck stretching.  We denote $\mathcal{J}_{cyl}(\widehat M_{out\pm},\omega,D)$ to be the set of almost complex structures satisfying the following:
\begin{enumerate}
    \item compatible with the symplectic form $\omega$ on $\widehat M_{out\pm}$,
    \item is translation invariant in the cylindrical ends,
    \item satisfies $J(\dd{s}) = R$ in the cylindrical ends,
    \item $D$ is adapted to it.
\end{enumerate} 

We define $\cJ_{cyl}(\bR \times Z)$ similarly as the set of cylindrical almost complex structures which are compatible with $\omega_Y$ on $\ker \lambda$ and satisfies $J(\dd{s})=R$.

\begin{defn}[Domain dependent almost complex structure for broken disks ]\label{def:domdepstruc}
A domain dependent almost complex structure of a disk with $k$ interior markings is a smooth map $\mathcal{J}_{\overline{\Gamma}}: \mathcal{U}_{\overline{\Gamma}} \to \mathcal{J}_{cyl}(\widehat M_i,\omega,D)$ such that near the puncture nodes, the almost complex structure is equal to $J_{fix}$, i.e., there is an open neighborhood $U$ in $\mathcal{U}_{\overline{\Gamma}}$ near the locus of puncture nodes such that  $\mathcal{J}_{\overline{\Gamma}} (w) = J_{fix}$ for $w\in U$ . 
\end{defn}

Given a domain dependent almost complex structure on a stabilized combinatorial type of broken strip $\overline {\Gamma^{st}}$ such that near the puncture nodes $J_{\overline{\Gamma}}$ is equal to $J_{fix}$, we can get a domain dependent almost complex structure on the unbroken  $T$-double-stretched manifold $M^T$ by gluing $\Gamma^{st}$ at the puncture nodes. A $T - $ gluing across a puncture node, where $T>0$ is a parameter measuring the ``length of the neck", is obtained by identifying truncated neighborhoods (deleting $(T,\infty)$ from positive end and $(-\infty,-T)$ from negative end) of the strip/cylinder coordinate near both sides of the puncture node.

For notational simplicity we illustrate the gluing construction precisely in the case where there is only one puncture node, the general construction is done similarly. Fix a parameter $T>0$. For now, assume $\Gamma^{st}$ has only one puncture node. Let $\Gamma^{st}_{gl}$ be the combinatorial type obtained from gluing $\Gamma^{st}$ across a puncture node. Thus, the moduli space of disks $\M_{\Gamma^{st}}$ of type $\Gamma^{st}$ is a lower dimensional stratum in the compactified moduli space $\ol \M_{\Gamma^{st}_{gl}}$. We can choose a chart of $\M_{\Gamma^{st}_{gl}}$ near the strata $\M_{\Gamma^{st}}$ by first choosing strip-like parameterization of the surfaces near the puncture node and then gluing the strips.  Let $[\mathfrak{s}]\in \M_{\Gamma^{st}}$, and $C_{[\mathfrak{s}]}$ be the surface corresponding to $[\mathfrak{s}]$, let $p$ be the puncture node lying between the components $C_-,C_+$ of $C_{[\mathfrak{s}]}$. Let $U_\pm$ be  neighborhoods of $p$ in $C_\pm$ such that we can choose charts  $\phi_\pm$ on them as follows,
\begin{align*}
    \phi_-:U_- \to (-\infty ,0] \times \interval \\
    \phi_+:U_+ \to [0,\infty) \times \interval.
\end{align*}
\noindent We obtain the $T$-glued domain $C^T_{[\mathfrak{s}]}$ by removing $\phi_-((-\infty,-T)\times \interval)$  and $\phi_+((T,\infty)\times \interval) $ from the components $C_\pm$ of $C_{[\mathfrak{s}]}$ and gluing the remnant surfaces by the relation $(s,t) \sim (s+T,t)$.

We now explain how to glue domain dependent almost complex structure. Since $\J_{\overline{\Gamma}}$ is equal to $J_{fix}$ near the locus of the puncture node, we can choose a strip-like neighborhood $U_\pm$ as above, so that $\J_{\overline{\Gamma}}$ restricted to $U_\pm$ is equal $J_{fix}$. Recall that we do not perturb the almost complex structure on the inside piece. We define a $T$-glued domain dependent almost complex structure $J^T;$ on the $T$-glued domain $C^T_{[\mathfrak{s}]}$, by defining $J^T_k(z)$ to be equal to the $T$-stretched almost complex structure in the $J^T$ neck region for all $z$.

\begin{align*}
    J^T_k(z) (m) =
    \begin{cases}
     J^T_{fix}        & \text{ if } z \in \phi_-((-\infty,T])\\
     J_k(z)(m)    & \text { if } z\in C^T_{[\mathfrak{s}]} \sm \phi_-(\infty,T]
     \end{cases}
\end{align*}

\begin{defn}[Coherent families of perturbation data for broken disks]
We call the set $$\mathcal{P} = \{ \mathcal{J}_{\overline{\Gamma}} \mid \overline{\Gamma} \text{ is a combinatorial type of broken disk} \}$$, a coherent family of perturbation data if the $T-$glued domain dependent almost complex structure forms a coherent family of perturbation for $M^T$ as in Definition 3.14 in \cite{cwstab} for any $T>0$.

\end{defn}

We explain the notion of being holomorphic with respect to a domain dependent choice of almost complex structure.  A map $u:S_{\textbf{z}} \to \widehat M_{out}$  from a $k$ marked disk $S_{\textbf{z}} = \wt \pi_k \inv ([\textbf{z}]) $ where $[\textbf{z}] \in \M_k$ is called  holomorphic with respect to the domain dependent almost complex structure $\mathcal{J}$ if it satisfies the domain-dependent version of the Cauchy Riemann equation 
\begin{equation*}
   du(z) + J_k(z,(u(z)))\of du \of j_S = 0 .
\end{equation*}
The expected dimension of moduli space of holomorphic maps with respect to domain-dependent choice almost complex structure is equal to domain independent almost complex structure if we add divisor constraints on the marked point.

\begin{defn}
    We call a perturbation scheme $\cP$ to have enough regularity if any holomorphic broken disk with this choice of domain dependent complex structure is either of high index, or is regular. Thus having enough regularity ensures that all rigid broken disks are regular.
\end{defn}

\begin{defn}[Broken divisors]\label{def:div}
    Given a broken manifold $(M_\inn, M_\out = \sqcup M_{out\pm})$ with broken along a divisor $Y$ and a broken Lagrangian $(L_\inn,L_\out)$,
    a broken divisor is a set of codimension two almost complex submanifolds $\bD = (D_\outm, D_\inn, D_\outp)$ such that $D_i$ is 
    Poincare dual to a large multiple $k[\omega_i]$ of the symplectic class for $k\in \bN$ where $i\in \{ \inn, \outm, \outp  \}$ and $D_\inn \cap Y = D_\out \cap Y$.
    We say a broken divisor is adjusted to a broken Lagrangian $(L_\inn,L_\outp)$ if the broken Lagrangian lies in the complement
    of the divisor and is exact. We call each piece of a broken divisor to be a \textit{Donaldson divisor} in the respective piece of a broken manifold.
\end{defn}
\begin{remark}
In our situation where we perform breaking along disjoint coisotropic submanifolds $\phi(\{a,b\} \times Z)$ for a Lagrangian with BSP handle, the inside piece is the projective bundle $\bP ( E \oplus \bC )$ where $E$ is the associated complex line bundle with the circle bundle $Z$ over $Y$ and the outside piece is the disjoint manifold obtained from the two symplectic cuts at $a$ and $b$. We can take the inside piece of the broken divisor, $D_\inn$, to be the bundle over a divisor $D_Y$ on $Y$ such $L_Y$ is exact in the complement of $D_Y$ and $D_Y$ is Poincar{\'e} dual to a large multiple $k[\omega_Y]$. This ensures any holomorphic disk $u$ which is not contained in a single fiber of $\bP(E\oplus \bC)$ intersects with $D_\inn$. We can then use Theorem \ref{brokdiv} to find divisors in the potentially disjoint outer pieces.
\end{remark}

The moduli space of broken disks with domain $\overline{\Gamma}$ to the broken manifold $\mathbb{M}[l,k]$ with boundary on the broken Lagrangian $\mathbb{L}[k]$  is denoted as $\mathcal{M}_{\overline{\Gamma}}(\mathbb{M}[l,k],\mathbb{L}[k],\bD)$ and the moduli space of broken discs is denoted by $\mathcal{M}_{Br}(\mathbb{M}[l,k],\mathbb{L}[k],\bD)$.

Assume $\Gamma^{st}$ is the stabilized combinatorial type of the domain. Let $C$ denote the nodal surface $\mathfrak{D}$. Let $C_i$ denote the surface in $i\th$ piece of the domain, $\mathbb D_i$, regarded as a smooth manifold with nodes and let 
\begin{equation}\label{domtriv}
    \mathcal{U}_{\Gamma^{st}_i,l} \to \mathcal{M}_{\Gamma^{st}_i,l} \times C_i
\end{equation}
be a collection of trivializations which cover $\cM_{\Gamma^{st}}$, indexed by $l$, of the universal moduli space of Riemann surfaces of type $\Gamma$. These trivializations induce a family of complex structures on the fibers of the universal space \begin{equation}\label{domchoice}
    \M_{\Gamma^{st}_i,l} \to \mathcal{J}(C_i), \quad m\to j(m).
\end{equation}
\noindent Let $\mathcal{E}_i$ denote the bundle of $(0,1)$ forms over the space $\Maps^{k,p}_{\delta}(\Sigma_i,M_i ,L_i$, i.e. the fiber $\mathcal{E}_i$ over a map $u_i$ is 
\begin{equation*}
    \mathcal{E}_{i,u}:= W^{k,p}_\delta ( \overline{\operatorname{Hom}}( T\Sigma_i,u_i^*TM_i)).
\end{equation*}
We can define the following map by setting it to $\delbar_{j(m),J_i}$ on each component 

\begin{align*}
    \delbar_{Br} :\mathcal{M}_{\Gamma^{st},l}\times \Maps^{k,p}_{\delta}(\Sigma_{in},\C^n,T_\gamma) \times \Maps^{k,p}_{\delta}(\Sigma_1,\R \times S^{2n-1},\R \times \Lambda)\\ \times \dots \Maps^{k,p}_{\delta}(\Sigma_{out},\widehat M_{out+},L_{out+},K)\to &\mathcal{E}_{out-}\times \dots \mathcal{E}_{out+}.
\end{align*}
\noindent The zero set $\delbar_{Br}\inv (0)$ is the space of holomorphic maps from each piece of the broken disk to the corresponding level of a broken manifold \textit{without} the matching of Reeb asymptotes. The moduli space of broken disks  is obtained as the inverse image of the diagonal of space of Reeb chord matchings,
$$EV: \delbar_{Br}\inv (0)  \to \cR(\Lambda_2)^{|\cE_{\overline{\Gamma}}|} \times M_\outm^{\cI_\outm} \times \dots M_\outp^{\cI_\outp} $$
$$\mathcal{M}_{\overline{\Gamma}}(\mathbb{M}[l,k],\mathbb{L}[k],\bD) = ev\inv(\Delta \times D_\outm ^{\cI_\outm}  \times \dots D_\outp^{\cI_\outp} )$$
where $\Delta$ is the diagonal submanifold of the product manifold $\cR(\Lambda_2)^{|\cE_{\overline{\Gamma}}|}$, $\cI_i$ is the index set of interior punctures in $\Gamma_i$ and $D_i$ is the $i\th$ piece of a broken divisor.

We say a broken disk $(\mathfrak{u},\mathfrak{D})$ is \textit{regular} if the linearization of $\delbar_{Br}$ is surjective at $\mathfrak{u}$ and the evaluation map $EV$ is transverse to $\Delta \times D_\outm ^{\cI_\outm}  \times \dots D_\outp^{\cI_\outp}$ at $EV(\mathfrak{u}).$

\subsection{Gluing broken things}

We recall the notation relevant to gluing from \cite{PW19}.  Given a nodal domain $\mathfrak{S}$ with $l,k$ levels, and gluing parameters $\delta_{-l},\dots \delta_{-1},\delta_1,\dots \delta_k > 0$, we define the glued domain $S^{\delta_1,\dots,\delta_k}$ by gluing strip-like $(-L,L) \times \R \times \interval$ or cylindrical necks $(-L,L) \times S^1$ of length $2L =|\ln (\delta_i)|$ at the i$^{th}$ puncture node. Given a $l,k $ broken manifold $\mathbb M$, we can similarly define $M_{\delta_{-l},\dots,\delta_k}$ by gluing necks of appropriate length. Note that the notational convention we choose is to use subscript to denote parameters of a glued broken manifold while superscript is used to denote a stretched manifold. We use \cite{PW19} for gluing results and  bijection of moduli of rigid broken disks and rigid disks from the gluing construction. We skip the proofs of these gluing theorems and refer readers to \cite[\S 6.5]{PW19}

\begin{Theorem}[Theorem 6.8 in \cite{PW19}]
Let $(\mathfrak{u,S})$ be a regular broken disk with boundary on broken Lagrangian $\bL$  limiting eigenvalues $\lambda_1,\dots\lambda_k$ of Reeb chords or orbits. Then, there exists  $\delta_0$ such that for each $\delta \in (0,\delta_0)$ there exists an unbroken strip $u_\delta: S^{\delta/\lambda_1,\dots,\delta/\lambda_k} \to M_\delta$ such that $u_\delta \to \mathfrak{u}$ as $\delta \to 0$.
\end{Theorem}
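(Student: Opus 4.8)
The plan is to follow the standard pre-gluing / Newton--Picard scheme of \cite[\S 6.5]{PW19} (see also \cite[\S 3]{Cha23}); since that reference carries out the analysis in essentially this generality, I would only sketch it here. The one structural input used throughout is that the asymptotic Reeb chords $\gamma_p$ at the puncture nodes of $\mathfrak{S}$ are non-degenerate, so each level $u_i$ of $\mathfrak{u}$ decays exponentially to the trivial Reeb strip/cylinder over $\gamma_p$ with rate bounded below by the smallest positive eigenvalue $\lambda_{\min}$ of the asymptotic operator; this exponential decay is what forces the estimates below to be uniform as $\delta\to 0$.

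First I would do the pre-gluing: for $\delta$ small, produce an approximate solution $u_\delta^{\mathrm{pre}}\colon S^{\delta/\lambda_1,\dots,\delta/\lambda_k}\to M_\delta$ by truncating each $u_i$ outside its strip-like and cylindrical ends, cutting off with a fixed bump function the exponentially small difference between $u_i$ and the trivial cylinder over $\gamma_p$, and gluing across each neck of length $2L_i=|\ln(\delta/\lambda_i)|$. Because the $u_i$ decay exponentially and the neck regions of $\bL$ are cylindrical in $Z$, the resulting map meets the Lagrangian boundary condition exactly, is honestly $J^\delta$-holomorphic away from the necks, sends the interior marked points to $\bD$ up to controlled error, and satisfies $\|\delbar u_\delta^{\mathrm{pre}}\|_{k-1,p,\delta}\le C e^{-\kappa L}$ with $L=\min_i L_i$ and $0<\kappa<\lambda_{\min}$.

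Next I would produce a uniformly bounded right inverse. By hypothesis each $D_{u_i}$ on the weighted domain $W^{k,p,\delta}$ with $0<\delta<\pi/k$ (as in Lemma \ref{lemm:singlepuncindex}) is surjective, and after taking the fiber product over Reeb-chord matchings the linearization of $\delbar_{Br}$ together with $EV$ is surjective; I would choose uniformly bounded right inverses $Q_i$ of the $D_{u_i}$ compatible with the matching conditions at the puncture nodes, patch them with the same cutoff functions into an operator $Q_\delta$ for $D_{u_\delta^{\mathrm{pre}}}$, and prove $\|Q_\delta\|\le C$ uniformly in $\delta$ with $\|D_{u_\delta^{\mathrm{pre}}}Q_\delta-\mathrm{Id}\|\le\tfrac12$ for $\delta$ small. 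This uniform bound is the hard part: on the long necks one has to split a section into its finite-dimensional asymptotic-limit component and its exponentially-weighted complement and run weighted elliptic estimates, using non-degeneracy of $\gamma_p$ to rule out the small-eigenvalue contributions that would otherwise make $\|Q_\delta\|$ blow up as $L\to\infty$; this is exactly the technical core of \cite[\S 6.5]{PW19}.

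Finally I would invoke a quantitative implicit function theorem. Combining the small error from the pre-gluing, the uniformly bounded approximate right inverse, and the quadratic estimate for the nonlinearity $\eta\mapsto\delbar(\exp_{u_\delta^{\mathrm{pre}}}\eta)$ --- which comes from the cylindrical bounds on the metric, connection, and second fundamental form fixed in \S\ref{sec:Freddy} --- a contraction-mapping argument yields a unique small $\eta_\delta\in\im Q_\delta$ so that $u_\delta:=\exp_{u_\delta^{\mathrm{pre}}}(\eta_\delta)$ is $J^\delta$-holomorphic and maps the marked points to $\bD$. Since $\|\eta_\delta\|\to 0$ and $u_\delta^{\mathrm{pre}}\to\mathfrak{u}$ in the SFT topology as $\delta\to 0$, one gets $u_\delta\to\mathfrak{u}$; transversality of $EV$ to $\Delta\times\bD$ at $\mathfrak{u}$ then upgrades this to a local diffeomorphism of moduli spaces, which is the form in which the statement feeds into Theorem \ref{gluingglimpse}. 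All analytic details I would simply cite from \cite[\S 6.5]{PW19} and \cite[\S 3]{Cha23}.
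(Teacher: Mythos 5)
Your proposal follows exactly the route the paper takes: the paper offers no independent argument for this statement but defers entirely to \cite[\S 6.5]{PW19} (and \cite{Cha23}), and your sketch of pre-gluing, a uniformly bounded right inverse on the long necks via exponential decay at non-degenerate Reeb chords, and a quantitative implicit function theorem is precisely the argument being cited there. The outline is correct and consistent with how the statement is used later (Theorem \ref{gluingglimpse} and Theorem \ref{thm:brokissame}), so there is nothing to add.
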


\noindent We restate Theorem \ref{gluingglimpse} here. This theorem asserts that if we ensure that every broken disk is either rigid-regular or of high index, then the gluing map gives a bijection between rigid broken disks and rigid disks in the glued unbroken manifold.

\begin{Theorem}[Theorem 4.25 \cite{Cha23}, Theorem 6.25 in \cite{PW19}]\label{glu}
    Let $\mathbb M$ be a broken symplectic manifold with no neck levels and $\mathbb L$  be a broken Lagrangian with no neck levels.  Assume perturbations have been chosen so that every strip in $\mathcal{M}^{<E}_{Br,\mathcal{P}}(\mathbb{M},\mathbb{L},D)_0$ is regular. There exists $\delta_0$ such that for $ \delta <\delta_0$ the correspondence $\mathfrak{u} \mapsto u_\delta$ from gluing defines a bijection between the moduli spaces $\M^{<E}_{Br,\mathcal{P}}(\mathbb{M},\mathbb{L})_0$ and $\mathcal M ^{E_{Hor}< E}(M_{\delta},L)_0$.

$$\M^{<E}_{Br,\mathcal{P}}(\mathbb{M},\mathbb{L})_0 \leftrightarrow \mathcal M ^{E_{Hor}< E}(M_{\delta},L)_0$$
\end{Theorem}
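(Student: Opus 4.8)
The plan is to follow the standard SFT gluing scheme of \cite[\S~6.5]{PW19} and \cite[\S~4]{Cha23}, adapted to the present setting of simultaneous stretching at the two disjoint hypersurfaces $\phi(\{a,b\}\times Z)$; since this is the restatement of Theorem~\ref{gluingglimpse} (itself a version of \cite[Theorem~4.25]{Cha23} and \cite[Theorem~6.25]{PW19}), I would organize the argument around the three classical steps — pre-gluing, Newton--Picard correction, and a compactness argument for surjectivity. \emph{Pre-gluing:} given a rigid broken disk $\mathfrak u\in\M^{<E}_{Br,\cP}(\bM,\bL)_0$, the no-neck-levels hypothesis together with the stability clause of Definition~\ref{def:brokdisk} means the pieces to be glued meet at puncture nodes asymptotic to common minimal Reeb chords or orbits. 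Using cylindrical coordinates near each such puncture and the exponential convergence built into the weighted Sobolev setup of \S~\ref{sec:Freddy} (weights $0<\delta<\pi/k$), I would truncate each map at depth $\sim|\ln\delta_i|/2$ into its cylindrical end, splice along $(s,t)\sim(s+T_i,t)$ with $2T_i=|\ln\delta_i|$, and interpolate the small difference between the two asymptotic limits with a cutoff. This yields an approximately $J^{T}$-holomorphic map $u_{\mathrm{pre}}$ on the glued domain $S^{\delta_{-l},\dots,\delta_k}$ with $\|\delbar u_{\mathrm{pre}}\|$ of size $O(e^{-\delta T})$ in the relevant weighted norm.

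\emph{Newton--Picard correction:} the analytic heart is a \emph{uniform} right inverse $Q_\delta$ of the linearization $D_{u_{\mathrm{pre}}}$ with $\|Q_\delta\|$ bounded independently of $\delta$ as $\delta\to 0$. Here the regularity hypothesis enters twice: each $D_{u_i}$ is surjective (so it admits a right inverse on the piece), and the fiber product over Reeb-chord evaluations defining the broken moduli space is cut out transversally; patching the piecewise right inverses with the gluing cutoffs and absorbing the resulting error ($O(e^{-\delta T})$) by a Neumann series produces $Q_\delta$. A contraction-mapping argument then gives, for $\delta<\delta_0$, a unique honest holomorphic $u_\delta$ near $u_{\mathrm{pre}}$ in a ball of radius a fixed fraction of $1/\|Q_\delta\|$; the Donaldson-divisor constraints at the interior markings and the point constraint on $L_{\outp}$ are open conditions that survive the small correction, so $u_\delta\in\mathcal M^{E_{Hor}<E}(M_\delta,L)_0$ and $\vir_{const}(u_\delta)=0$. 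This defines $\mathfrak u\mapsto u_\delta$.

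\emph{Bijectivity:} injectivity follows because distinct broken disks give pre-glued maps bounded apart in the weighted norm, so for $\delta_0$ small the local uniqueness above forbids them from gluing to the same $u_\delta$ (and the combinatorial type of the breaking is recoverable from where $u_\delta$ has long necks). For surjectivity, suppose for a sequence $\delta_n\to 0$ there were rigid $u_{\delta_n}\in\mathcal M^{E_{Hor}<E}(M_{\delta_n},L)_0$ not in the image; by the SFT Compactness Theorem of \S~\ref{sftbackground} a subsequence converges to a stable broken disk $\mathfrak v$ with $E(\mathfrak v)<E$, and $\vir_{const}(\mathfrak v)=0$ since every level is regular by hypothesis and index is additive under breaking — a surviving neck level or a higher-index piece would force $\vir_{const}(u_{\delta_n})>0$, contradicting rigidity. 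Hence $\mathfrak v\in\M^{<E}_{Br,\cP}(\bM,\bL)_0$, and by the uniqueness in the correction step $u_{\delta_n}$ must equal the glued map of $\mathfrak v$ for large $n$, a contradiction.

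I expect the main obstacle to be exactly the uniform bound on $Q_\delta$ across the necks: controlling the glued right inverse independently of the neck length requires the asymptotic operators on $\bR\times Z$ to have no small eigenvalues in the chosen weighted spaces, and the analysis is more delicate than the closed-orbit case because the necks are cylindrical over $Z$ rather than over a single Reeb orbit. In the present paper this is handled as in \cite[\S~4]{Cha23} via the model-bundle-pair description of Theorem~\ref{modelthm}, which reduces the relevant asymptotic operators to the model over $(\bC P^{k-1},T^{k-1}_{Cl})$; so in practice I would invoke \cite[\S~6.5]{PW19} and \cite[Theorem~4.25]{Cha23} for the gluing analysis and only spell out the bookkeeping needed for the two independent families of neck parameters $\delta_{-l},\dots,\delta_{-1}$ and $\delta_1,\dots,\delta_k$ appearing in the statement.
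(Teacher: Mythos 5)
Your proposal is correct and matches the paper's treatment: the paper gives no independent proof of this statement, saying explicitly that it skips the gluing proofs and refers the reader to \cite[\S 6.5]{PW19} and \cite{Cha23}, which is precisely the deferral you make at the end of your argument. Your outline of pre-gluing, a uniform right inverse with a Newton--Picard correction, injectivity from local uniqueness, and surjectivity via SFT compactness is the standard scheme that those cited references carry out, so beyond the bookkeeping for the two independent families of neck parameters that you already flag, there is nothing the paper does that your sketch omits.
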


\subsection{Regularity of broken buildings}\label{sub:reg}

We use broken divisors as in \cite{floflip} to regularize our broken disks by allowing domain dependent perturbations of the almost complex structure. The idea of using domain dependent perturbations by considering domains with marked point has been widely utilized in the literature to cut moduli spaces out transversely, see  \cite{cm07} for its development in the context of Gromov-Witten theory and \cite{cwstab} for the Floer theoretic setting.

\begin{defn}[Rational Broken manifold]
We call a broken manifold $(M_\inn,  M_\out = \sqcup M_{out\pm})$ rational if the induced symplectic form on each piece is a rational form.
Moreover, we call a broken Lagrangian to be rational if the symplectic form is rational in the relative cohomology, in other words, if the
symplectic area of any two cycle with boundary on each piece of the broken Lagrangian is a rational number.
\end{defn}

\begin{example}
    The complex projective space $\bC P^n$ with an appropriately scaled symplectic form has and embedding of the standard symplectic ball $(B^{2n}(0),\omega_0)$.
    We break the complex projective space $\bC P^n$ along the disjoint union of two spheres $S_a^{2n-1} \sqcup S_b^{2n-1}$ of rational radii to obtain 
    a rational broken manifold where $M_\inn = \bP(\cO(-1)\oplus \bC)$, $M_\outm = \bC P^n $ and $ M_\outp = \bP(\cO(-1)\oplus \bC)$.

\end{example}

\begin{defn}[Lagrangians broken along a Bohr-Sommerfeld-profile]
    We call a broken Lagrangian $(\overline{L}_\inn, \overline{L}_\out)$ if it is obtained as a breaking of an unbroken Lagrangian $L$ along a coisotropic submanifold $\phi( \{b\} \times Z)$ where $\phi$ is the symplectomorphism in the local model of a Lagrangian with BSP handle as in Definition \ref{bsphandle}.
\end{defn}

\begin{Theorem}\label{brokdiv}
If $(\overline{M}_\inn,\overline{M}_\out)$ is a rational broken manifold equipped with line bundles $\Tilde{M}_\inn,\Tilde{M}_\out$ with connections whose curvature are $\omega_\inn, \omega_\out$, such that the restriction of the bundles on $Y$ agree, then there is a broken divisor. Moreover if $(\overline{L}_\inn,\overline{L}_\out)$ is a rational broken 
Lagrangian, broken along a Bohr-Sommerfeld profile and both $\overline{M}_\inn,\overline{M}_\out$ are simply connected, we can assume that the broken divisor is adjusted to the broken Lagrangian.
\end{Theorem}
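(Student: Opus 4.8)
The plan is to reduce the statement to a \emph{constrained} version of Donaldson's asymptotically holomorphic hypersurface theorem, applied piece by piece, together with the Lagrangian-compatible refinements of that theorem used to build broken divisors in \cite{floflip,cm07}. Since the line bundles $\tilde M_\inn,\tilde M_\out$ (with connections of curvature $\omega_\inn,\omega_\out$) are given and restrict to the same bundle-with-connection $\tilde M_Y$ over $Y$, it suffices to produce, for one large integer $k$ that works simultaneously for all the pieces and for $Y$, asymptotically holomorphic sections $s_i$ of $\tilde M_i^{\otimes k}$ for $i\in\{\inn,\outm,\outp\}$ which are uniformly transverse to zero and whose restrictions to $Y$ \emph{all coincide}. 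Indeed, for such a family $D_i:=Z(s_i)$ is then a symplectic hypersurface Poincar\'e dual to $k[\omega_i]$, the common restriction $s_Y:=s_i|_Y$ cuts out a divisor $D_Y=D_i\cap Y$ independent of $i$, and $\bD=(D_\outm,D_\inn,D_\outp)$ is a broken divisor in the sense of Definition~\ref{def:div}.

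First I would construct the base divisor inside $Y$. Applying Donaldson's theorem to $(Y,\omega_Y,\tilde M_Y)$ yields, for $k\gg 0$, an asymptotically holomorphic section $s_Y$ of $\tilde M_Y^{\otimes k}$ that is uniformly transverse to zero, so $D_Y:=Z(s_Y)$ is Poincar\'e dual to $k[\omega_Y]$; when $\overline L_i$ meets $Y$ (for a Bohr-Sommerfeld-profile breaking this intersection is either empty or the base Lagrangian $L_Y$) one uses instead the Auroux--Gayet--Mohsen version to keep $D_Y$ disjoint from it and the refinement of \cite{cm07} to make $\omega_Y$ admit a primitive on $Y\sm D_Y$ whose restriction to $L_Y$ is exact. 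Next I would extend $s_Y$ to each piece $\overline M_i$: pick a symplectic tubular neighbourhood of $Y$ in $\overline M_i$ (a disk bundle in the normal bundle), pull $s_Y$ back along the bundle projection, and multiply by a bump function supported near $Y$ to get an asymptotically holomorphic section $\hat s_i$ of $\tilde M_i^{\otimes k}$ that agrees with $s_Y$ on $Y$ and vanishes away from the neighbourhood. One then runs Donaldson's globalisation of uniform transversality \emph{relative} to $Y$ --- perturbing $\hat s_i$ by a small asymptotically holomorphic section vanishing along $Y$ until it is uniformly transverse on all of $\overline M_i$, which does not disturb the already-achieved transversality near $Y$ --- and the resulting $s_i$ still restricts to $s_Y$ on $Y$. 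Carrying this out for $i\in\{\inn,\outm,\outp\}$ produces the compatible family, hence the broken divisor.

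For the ``adjusted'' clause, suppose in addition that $(\overline L_\inn,\overline L_\out)$ is a rational broken Lagrangian broken along a Bohr-Sommerfeld profile and that the $\overline M_i$ are simply connected. I would perform the extension step in the Lagrangian-compatible way: following \cite{floflip,cm07}, the perturbing sections can be chosen so that $\overline L_i$ stays in the complement of $D_i$, and simple-connectivity of $\overline M_i$ together with rationality of $\overline L_i$ ensure that the relevant cohomological obstruction vanishes after possibly enlarging $k$, so that $\omega_i$ restricted to $\overline M_i\sm D_i$ has a primitive whose restriction to $\overline L_i$ is exact --- i.e.\ $D_i$ is adjusted to $\overline L_i$. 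Since the base construction on $Y$ was already carried out with these properties and the extensions fix the restriction to $Y$, the three pieces remain coherent along $Y$, and $\bD$ is a broken divisor adjusted to $(\overline L_\inn,\overline L_\out)$.

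The main obstacle I anticipate is exactly this relative, Lagrangian-compatible globalisation: one must combine Donaldson's perturbation scheme for promoting transversality-near-$Y$ to transversality-everywhere (which constrains the perturbations to vanish on $Y$, tightening the quantitative estimates) with the Auroux--Gayet--Mohsen/Cieliebak--Mohnke refinements that push the zero set off a Lagrangian and make it adjusted (which is where rationality of $\overline L_i$ and simple-connectivity of $\overline M_i$ enter). Each ingredient is in the literature --- and the treatment is essentially that of \cite{floflip} together with \cite{cm07} --- but the point requiring care is that the three constructions be run simultaneously so that the $s_i$ restrict to literally the \emph{same} section $s_Y$ on $Y$, rather than to sections that are merely isotopic; achieving this genuine compatibility along the common divisor $Y$ is the technical heart of the proof.
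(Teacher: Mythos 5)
Your proposal follows essentially the same route as the paper's proof: build asymptotically holomorphic sections on $Y$ concentrated on (hence avoiding, after transversality) $L_Y$, extend them to the pieces by sections supported near $Y$ that restrict to the same $s_Y$, and then globalize uniform transversality relative to $Y$ in a Lagrangian-compatible way, with rationality and simple-connectivity supplying the flat sections over the broken Lagrangians needed for adjustedness. The paper executes this with the machinery of \cite{floflip} and \cite{bcsw1} (rather than \cite{cm07}), using the Bohr-Sommerfeld profile's normal-direction structure $\cup_l \bR e^{il\pi/k}$ to get the required lower bound of the extended section along the Lagrangian near $Y$, but this is a technical variation within the same argument.
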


\begin{proof}
This result is a modified version of arguments already well known in the literature about existence of divisors in complement of cleanly intersecting Lagrangians. This construction was first developed by Donaldson in \cite{donaldson:symplsub} and later extended in \cite{aurmohgay:symplchyp, floflip} and for the argument with clean intersection case in \cite{PT20}.
First, note that from Lemma 4.3 \cite{bcsw1} that after up to taking a large tensor power of the bundles $\Tilde{M}_i$, there are flat sections over $\overline{L}_\inn, \overline{L}_\out $.

The first part of the result follows from  Theorem 7.15 in \cite{floflip} which uses Gaussian functions in the normal direction to extend asymptotically holomorphic sections to a neighborhood of $Y$, call these as $\pi^* \sigma_Y$. We can construct a sequence of sections $s_k$ on $Y$ which are concentrated on $L_Y$, asymptotically holomorphic and uniformly transverse to 0. Note that since the Lagrangian is broken along a Bohr-Sommerfeld profile, we can choose the normal bundle to $Y$ locally so that the Lagrangians are $\cup_{l=0}^{k-1} \bR e^{\frac{il\pi}{k}}$ in the normal direction. The same proof as in Lemma 4.5 in \cite{bcsw1} can be used to prove a lower bound on $\pi^*\sigma_Y$ in a neighborhood of $Y$. Now using the argument in Proposition 4.4 of \cite{bcsw1} using $\pi^*\sigma_Y$ instead of $\sigma_\cap$, it follows that one can construct a sequence of asymptotically holomorphic sections uniformly transverse to zero which is also concentrated on $L_\inn$ and $L_\out$. Moreover, we can use Lemma 7.17 in \cite{floflip} to construct sections such that their restriction to $Y$ matches with $s_k$. The existence of the claimed broken divisor then follow from Donaldson's construction.
\end{proof}

\begin{Theorem}\label{thm:regularity}
    There exists a perturbation scheme $\cP$ such that any building is either regular or is of high index.
\end{Theorem}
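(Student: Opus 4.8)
The plan is to construct the perturbation scheme $\cP$ by a standard Sard-Smale argument adapted to the broken setting, building on the divisor-constraint technology of Theorem \ref{brokdiv} and the domain-dependent perturbation framework set up in \S\ref{sec:Freddy}. The key point is that allowing the almost complex structure to vary with the domain (away from the puncture nodes, where it must equal $J_{fix}$) gives us enough freedom to achieve transversality, and the broken divisor $\bD$ supplies the marked points that stabilize otherwise-unstable ghost components so that this domain-dependence is actually available. I would proceed level by level: first make the pieces $\widehat M_{out\pm}$ regular, then handle the neck pieces, and finally observe that the inner piece $\overline M_\inn$ — where we do \emph{not} perturb — is already regular by the model-bundle-pair results (Theorem \ref{modelthm}, Lemma \ref{regularityfromregularity}, Lemma \ref{lemm:singlepuncindex}).

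Concretely, first I would fix the Donaldson divisors $\bD = (D_\outm, D_\inn, D_\outp)$ using Theorem \ref{brokdiv}; the rationality needed there comes from Remark \ref{rem:ctnyingamma}, which lets us perturb $\gamma$ so that all relevant symplectic areas are rational, and from the rationality of the ambient form on each broken piece. Next, for each combinatorial type $\overline\Gamma$ I would form the universal moduli space: the parametrized section $\delbar_{Br}$ over $\cB_{\overline\Gamma} \times \cP$ where $\cP$ ranges over coherent families of domain-dependent almost complex structures. The standard argument shows the universal zero set is a Banach manifold (the linearization is surjective because at any somewhere-injective point one can perturb $J$ in a neighborhood disjoint from the node locus, and the divisor constraint forces every non-ghost component to have such an injective point away from the nodes). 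Then the Sard–Smale theorem yields a comeager set of $\cP$ for which the fiber $\M_{\overline\Gamma}(\mathbb M, \mathbb L, \bD)$ is cut out transversely, hence regular, for that type. Taking a countable intersection over all combinatorial types $\overline\Gamma$ (there are only countably many, once we bound energy by exactness/monotonicity) gives a single $\cP$ working for all of them simultaneously. For a type whose virtual dimension $\vir_{const}$ is negative, transversality forces the moduli space to be empty; for $\vir_{const} = 0$ it is regular and rigid; for $\vir_{const} > 0$ it is high-index. This is the dichotomy in the statement.

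The main obstacle — and the reason the hypotheses are arranged the way they are — is the handling of \emph{ghost} (constant) components and, relatedly, the inner piece. Since we are forbidden from perturbing $J$ near the puncture nodes and on $\overline M_\inn$, we cannot make constant disks in the inner piece transverse by perturbing $J$; instead we must argue that the relevant configurations are already regular there. This is exactly what Lemma \ref{regularityfromregularity} buys us: a section over a regular base disk in $(Y, L_Y)$ is automatically regular, using the short exact sequence splitting $D_u = D_u^v \oplus D_{\overline u}$ and the Maslov-index computation plus Oh's Riemann–Hilbert result (\cite{riemHilb}) to get surjectivity of the vertical part. So the real work is: (i) verify that the coherence condition (Definition of coherent families, via \cite{cwstab} Definition 3.14) can be met — i.e.\ that the gluing of perturbations across necks is compatible — which is routine given that we take $J = J_{fix}$ near nodes; and (ii) verify that ghost components carry enough divisor markings to be stable so that the inductive transversality argument over the tree of components goes through, which is where adjustedness of the broken divisor to the broken Lagrangian (the last clause of Theorem \ref{brokdiv}, needing simple connectivity of $\overline M_\inn, \overline M_\outpm$) is used. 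The ERD property (A.6) enters to guarantee that the relevant base disks in $(Y,L_Y)$ with the needed Maslov indices actually exist and can be taken regular, so that Lemma \ref{regularityfromregularity} is non-vacuously applicable. I would present these verifications by citing the corresponding statements in \cite{PW19,Cha23,floflip,cwstab} rather than reproving them, since the only genuinely new input is the presence of two simultaneous necks, which affects only bookkeeping of labels and not the analytic content.
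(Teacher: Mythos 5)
Your overall framework (Donaldson-type broken divisor from Theorem \ref{brokdiv}, divisor markings to stabilize domains, domain-dependent coherent perturbations, Sard--Smale over each combinatorial type, countable intersection) matches the machinery the paper invokes from \cite{cwstab,floflip,PW19}. But there is a genuine gap in how you dispose of the components that cannot be reached by any of this: components of a building (in the inner piece $\bR\times Z$, and in the neck pieces) whose image is contained in a single fiber of $\pi_Y:\bR\times Z\to Y$. Such a component never meets $D_\inn$ (the divisor is pulled back from $D_Y\subset Y$ and $L_Y$ lies in the complement of $D_Y$), so it carries no interior markings: its domain may be unstable, and no domain-dependent perturbation is available for it, whether or not one insists on keeping $J$ cylindrical on $M_\inn$. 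Your proposal covers these only implicitly through Lemma \ref{regularityfromregularity}, which applies solely to \emph{single-punctured sections over regular disks} in $(Y,L_Y)$. A general fiber component can have several boundary punctures, can be multiply covered in the fiber direction, and need not be a section at all, so the dichotomy ``regular or high index'' is not established for it by anything in your argument. This is precisely where the paper's proof puts its only genuinely new content: an automatic-regularity argument for arbitrary fiber-contained punctured maps, obtained by extending the vertical linearized operator to the compactified vertical bundle (Lemma 4.3.2 of \cite{JholoWendl}) and applying Seidel's automatic regularity (Lemma 13.2 of \cite{seidelFuk}), with \cite[\S 5.1]{Cha23} cited as an alternative. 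Components \emph{not} contained in a fiber are exactly those that must intersect $D_\inn$, and for those the paper does allow the stabilized domain-dependent perturbation, including on the inner piece.

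Relatedly, your insistence that one ``does not perturb'' on $\overline M_\inn$ at this stage overshoots what Theorem \ref{thm:regularity} claims and makes the gap worse: with $J$ frozen on the whole inner piece, even non-fiber interior components (e.g.\ multiply covered or multiply punctured ones projecting to non-regular higher-Maslov disks) are not covered by Lemma \ref{regularityfromregularity}, and you have no mechanism left to regularize them. In the paper the fixed-in-the-interior scheme $\cP_0$ appears only later (Lemma \ref{lemm:fixedininterior}), is shown to regularize only the restricted class of buildings whose interior components are single-punctured sections over regular disks, and the passage from $\cP_0$ to a nearby scheme with full regularity is handled by the Gromov-compactness argument inside the proof of Theorem \ref{classifthm}. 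Finally, the ERD property plays no role in Theorem \ref{thm:regularity}; it is used later (Lemma \ref{anypathhassinglepunct} and the classification/replacement arguments), so invoking it here conflates two separate steps of the paper's scheme.
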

\begin{proof}
We sketch the change one needs to make to the already existing literature(\cite{floflip,PW19}) for construction of coherent perturbation schemes to attain enough regularity of moduli space of broken disks. Note that any punctured $J_{cyl}$ holomorphic disk map in the inside region and neck region which lies in a vertical fiber over $Y$ is regular due to a version of automatic regularity result, Lemma 13.2 of \cite{seidelFuk}. Let $u:\Sigma \to \bR \times Z$ be a holomorphic punctured disk to a fiber, then one sees that for any $X\in Vect(\Sigma)$ which extend to vector-fields over $\bD^2$ that vanish at the punctures, $du(X)$ extends to a section in the compactified vertical bundle. Thus from Lemma 4.3.2 \cite{JholoWendl} we can extend the domain of $D_u^v$ to $W^{k,p}(\overline{\Hom}(T\Sigma)) \oplus W^{k,p,\delta}(u^*T^v(\bR \times Z),\partial u^*(T^v \cL_\gamma)).$ Then we can use Lemma 13.2 of \cite{seidelFuk} to achieve regularity of the fiber maps. Regularity of maps to a fiber also follows from the regularity results of \cite[\S 5.1]{Cha23} since the fibers for a fixed choice of $\gamma$ are same irrespective of the Bohr-Sommerfeld profile.

For maps which are not contained in a single fiber, we know that the projection is non-constant and thus must intersect the divisor $D_\inn$. Following \cite{cwstab,floflip} we can choose a coherent perturbation scheme with enough regularity for maps which intersect the broken divisor.
\end{proof}

We will now prove a partial regularity statement about broken maps where the perturbation scheme is fixed in the inner piece $M_\inn.$ To be precise, we call a perturbation scheme to be \textit{fixed the interior}  if $\cP$ is fixed to be the cylindrical complex structure induced from $J_Y$ in $M_\inn.$ We will show that a certain class of broken disks can be made regular while fixing the interior almost complex structure. This allows us to exclude multiple punctured disks in the interior for a close enough perturbation scheme with enough regularity.

\begin{lemma}\label{lemm:fixedininterior}
    There is a perturbation scheme $\cP_0$ fixed in the interior such that any broken holomorphic map where the interior disks have single boundary puncture and are sections over regular disks in $(Y,L_Y)$ is regular as a broken disk.
\end{lemma}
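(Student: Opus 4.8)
The plan is to reduce the statement to two ingredients already in place---the automatic regularity of sections over regular disks (Lemma \ref{regularityfromregularity}) and the general coherent perturbation construction of Theorem \ref{thm:regularity}---the only new point being to check that fixing the almost complex structure on the inner piece $M_\inn$ costs nothing for broken disks of the stated form.

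First I would fix an almost complex structure $J_Y$ on $Y$ that is regular for disk-counting on $L_Y$ (such a $J_Y$ exists by standard transversality, and in our applications it may be taken to be the one furnished by the ERD, resp.\ SE, property), together with a Donaldson divisor $D_Y \subset Y$ in whose complement $L_Y$ is exact and which every non-constant $J_Y$-holomorphic disk with boundary on $L_Y$ meets transversally. Let $J_{cyl}$ be the induced cylindrical almost complex structure on $\bR \times Z$---hence on the inner piece $M_\inn$ and on every neck piece---and let $D_\inn$ be the bundle over $D_Y$ as in Section \ref{sub:reg}. I would then take $\cP_0$ to equal $J_{cyl}$ on $M_\inn$ and on all neck regions, and on the outer pieces $\widehat M_{\out\pm}$ to be a coherent family of domain-dependent structures adapted to a broken divisor whose inner component is $D_\inn$, constructed exactly as in the proof of Theorem \ref{thm:regularity} (Donaldson divisors together with domain-dependent perturbations supported away from the puncture nodes). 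The only deviation from Theorem \ref{thm:regularity} is that no perturbation is performed on $M_\inn$; since in that argument the inner piece is never perturbed in a fibrewise direction and elsewhere one perturbs only over the loci meeting the broken divisor, the inductive construction of a coherent scheme goes through verbatim.

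Next I would take a $\cP_0$-holomorphic broken disk $\mathfrak u$ each of whose interior components is a single-punctured disk which is a section over a regular $J_Y$-disk $\overline u$ in $(Y,L_Y)$, and verify the two defining conditions of a regular broken disk. Surjectivity of $\delbar_{Br}$ on each piece: the interior components are regular by Lemma \ref{regularityfromregularity}; neck components lying in a single fibre of $\pi_Y$ are regular by automatic regularity (Lemma 13.2 of \cite{seidelFuk}, as in the proof of Theorem \ref{thm:regularity}); and every remaining neck or outer component has non-constant $\pi_Y$-image, hence meets the broken divisor and is regular for $\cP_0$ by its construction. For transversality of the matched evaluation map $EV$ to $\Delta \times \prod_i D_i^{\cI_i}$, I would argue node by node: the divisor conditions at the interior markings of the frozen pieces are inherited from the transversality of the base disks $\overline u$ with $D_Y$, while at each Reeb-chord puncture node either one adjacent component lies in an outer or neck piece where the perturbation built into $\cP_0$ supplies the required transversality (handled exactly as in Theorem \ref{thm:regularity}), or one side is an interior component---in which case its Reeb-chord evaluation factors as the boundary evaluation of the regular disk $\overline u$, a submersion onto $L_Y$, followed by the fibre identification, and is therefore already in general position with respect to the (perturbed) evaluation from the other side.

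The step I expect to be the main obstacle is precisely this last one: ensuring transversality of the Reeb-chord matching at nodes adjacent to the frozen interior using only regularity of the base disk $\overline u$ in $Y$, with no additional perturbation on $M_\inn$. Here I would lean most on Lemma \ref{regularityfromregularity} and on the fact that a single-punctured section over a regular disk has surjective linearized operator with submersive boundary evaluation onto $L_Y$; granting that, the remainder is the standard Sard--Smale argument, coherently packaged as in \cite{floflip,PW19,cwstab}, which one then invokes to produce the desired $\cP_0$.
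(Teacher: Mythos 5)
Your proposal is correct and follows essentially the same route as the paper, whose proof consists of exactly these two ingredients: Lemma \ref{regularityfromregularity} for regularity of the single-punctured sections in the frozen inner piece, and the inductive coherent domain-dependent perturbation construction (as in \cite{cwstab,floflip}, i.e.\ the argument of Theorem \ref{thm:regularity}) on the neck and outer pieces to handle everything else, including the matching conditions. You spell out the evaluation-transversality bookkeeping that the paper leaves implicit, but the underlying argument is the same.
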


\begin{proof}
    From Lemma \ref{regularityfromregularity} we know that sections over regular disks are regular. Thus, using an inductive argument similar to \cite{cwstab,floflip} we can use the domain dependent perturbations on the neck and outside pieces to find such a $\cP_0$.
\end{proof}

\begin{remark}
    Since we can ensure to find a $J_Y$ which makes Maslov $2$ disks regular, we can ensure that the perturbation scheme $\cP$ is fixed in the interior where the interior disks have minimal possible markings. Since $L_Y$ is monotone, minimal possible marking ensures that  the Maslov number is minimal, thus equal to $2$.
\end{remark}

\begin{prop}\label{thm:goodrc}
    There exists a regular enough perturbation scheme $\cP$ such that rigid broken disks will have Reeb chords over $p\in L_Y$ such that $p$ is a regular value for the evaluation map of Maslov 2 disks.  
\end{prop}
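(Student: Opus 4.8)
The plan is to combine Sard's theorem for the evaluation map of Maslov $2$ disks with a parametric transversality argument that moves the inner Reeb chord, and to finish by a generic choice of the incidence point through which the broken disks must pass.

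First I would fix a perturbation scheme $\cP$ of the kind produced by Theorem \ref{thm:regularity} and Lemma \ref{lemm:fixedininterior}: one with enough regularity, fixed in the interior, and with interior almost complex structure the cylindrical lift of a $J_Y$ for which the once-boundary-marked moduli space $\cM_{\mu=2}(Y,L_Y)$ of Maslov $2$ disks is regularly cut out. Since $L_Y$ is monotone with minimal Maslov number $2$, the evaluation map $ev_{\mu=2}\colon \cM_{\mu=2}(Y,L_Y)\to L_Y$ is a map between manifolds of the same dimension, so Sard's theorem shows that the set $B\sub L_Y$ of its non-regular values has measure zero. With this $\cP$, the space $\cM$ of broken disks carrying one \emph{free} boundary marking on $\bD_\outp$ and having virtual dimension equal to $\dim L$ --- equivalently $\vir_{const}=0$ once the marking is pinned --- is a smooth manifold with $\dim\cM=\dim L$, and Theorem \ref{classifthm} applies to every member: each such $\fu$ carries a single inner minimal Reeb chord $c_\fu$, yielding evaluation maps $ev_{rc}\colon \cM\to \Lambda_2$, $\fu\mapsto c_\fu(0)$, then $q:=\pi_Y\of ev_{rc}\colon \cM\to L_Y$, together with the marked-point evaluation $ev_{mark}\colon \cM\to L$.

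Next I would carry out a further generic perturbation of $\cP$ \emph{supported on the neck and outer pieces only} --- thus leaving the interior, and hence Theorem \ref{thm:regularity}, Lemma \ref{lemm:fixedininterior} and the conclusion of Theorem \ref{classifthm}, intact --- in order to make $q$ a submersion. The mechanism is the standard parametric transversality scheme used to regularize neck-stretched moduli (cf. \cite{floflip,PW19,Cha23}): perturbing the almost complex structure in a neck neighbourhood of the boundary puncture of the outer component moves its asymptotic Reeb chord endpoint $c_\fu(0)$ in all directions of $\Lambda_2$, and since $\pi_Y$ is a covering map the same holds for $q$; the universal version of $q$ is then a submersion and Sard--Smale yields the required $\cP$. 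Consequently $q\inv(B)$ has measure zero in $\cM$. I expect this to be the crux of the argument: the perturbation must be confined to the region outside the fixed interior while still controlling the Reeb chord \emph{shared} between the inner and outer components, and one relies on Lemma \ref{anypathhassinglepunct} to know that the inner single-punctured disk exists over every Reeb chord, so that it poses no obstruction to the surjectivity the parametric argument needs. (If $q$ happens to be a submersion already for the $\cP$ of the first step, this refinement is unnecessary.)

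Finally, since $\dim\cM=\dim L$, the $C^1$ map $ev_{mark}$ sends the null set $q\inv(B)$ to a null set $ev_{mark}(q\inv(B))\sub L$. Adjoining the (also null) set of non-regular values of $ev_{mark}$, I would choose the incidence point $p'\in L$ in the complement --- a generic choice. For such $p'$, the fibre $ev_{mark}\inv(p')$ is the finite set of rigid broken disks through $p'$ (finiteness from SFT compactness and the energy bound supplied by monotonicity), and every $\fu$ in it satisfies $q(\fu)=\pi_Y(c_\fu)\notin B$, i.e. the projection of its inner Reeb chord is a regular value of $ev_{\mu=2}$. By Theorem \ref{classifthm}, $c_\fu$ is the only Reeb chord occurring in $\fu$, which is precisely the assertion of the proposition.
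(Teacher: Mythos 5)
Your first and last steps (Sard applied to $ev_{\mu=2}$, the covering map $\cR(\Lambda_2)\to L_Y$ relating regular values upstairs and downstairs, finiteness of rigid broken disks from SFT compactness and monotonicity) are consistent with what the paper does, but the crux of your argument --- the parametric transversality step making $q=\pi_Y\of ev_{rc}\colon \cM \to L_Y$ a submersion by perturbations supported on the neck and outer pieces --- does not work, and this is a genuine gap. On the matched moduli space the tangent space consists of pairs $(\xi_\inn,\xi_\out)$ with $d\,ev_\inn(\xi_\inn)=d\,ev_\out(\xi_\out)$, and $dq$ factors as $d\pi_Y\of d\,ev_\inn(\xi_\inn)$; since the interior almost complex structure must stay fixed (otherwise Lemma \ref{lemm:fixedininterior} and Theorem \ref{classifthm} are lost), the image of $dq$ --- even for the universal moduli space over all admissible outer perturbations --- is contained in $d\pi_Y$ of the image of the differential of the \emph{fixed} interior evaluation $\cM^{up}_{\Gamma_\inn}(M_\inn)\to \cR(\Lambda_2)$. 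At a configuration whose interior component is a critical point of that evaluation, no perturbation of $\cP_{\out}$ can restore surjectivity: moving the outer asymptotic chord "in all directions of $\Lambda_2$" is useless because the matching condition forces the chord to also be attained, with surjective differential, by the unperturbed interior piece. Consequently $q$ need not be a submersion, your moduli $\cM$ need not be a manifold of dimension $\dim L$ near such configurations (excess dimension precisely over the critical locus), and the conclusion that $q\inv(B)$ and hence $ev_{mark}(q\inv(B))$ are null is unjustified --- which is exactly the locus the proposition is trying to avoid.

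The paper proves a weaker statement that suffices and sidesteps this. By Lemma \ref{lemm:singlepuncindex} the unparametrized interior moduli of single-punctured disks asymptotic to minimal Reeb chords has dimension $n-1=\dim \cR(\Lambda_2)$, so its critical values form a nowhere dense set; the outer components of the finitely many rigid, point-constrained broken disks form a $0$-dimensional moduli space, and since the evaluation from the \emph{universal} outer moduli space is a submersion, one can perturb $\cP_{\out}$ to move these finitely many Reeb-chord evaluations off that nowhere dense set. No global submersivity of $q$ is claimed or needed. Regularity for the Maslov~$2$ evaluation in $Y$ is then deduced from regularity for the interior evaluation via the commutative diagram of covering maps (with Theorem \ref{modelthm} supplying the lift that would otherwise contradict it), and fiber-type interior disks are handled separately since their evaluation to $\cR(\Lambda_2)$ is a surjective submersion. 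If you want to keep your "generic point $p'$" framing, you would still have to replace the submersion claim by this finite-avoidance argument; as written, the key step fails.
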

\begin{proof}
    From Theorem \ref{classifthm} we know that any rigid broken disk in a perturbation scheme with enough regularity has only disks with single puncture in the inside piece where the single puncture is asymptotic to a minimal Reeb chord. We first deal with the case of rigid broken disk with interior disk lying over a Maslov 2 disk.
    
    The moduli space of \textit{unparametrized} interior disk with a single boundary puncture asymptotic to a minimal Reeb chord is of dimension $n-1$ from Lemma \ref{lemm:singlepuncindex} which is same as the dimension of the space of Reeb chords, $\cR(\Lambda_2)$. We denote this unparameterized moduli space as $\cM^{up}_{\Gamma_\inn}(M_\inn)$ Thus, choosing a perturbation scheme to cut out the matching condition at each Reeb chord between the inside and outside piece of rigid broken disk ensures that the Reeb chords which occur in rigid broken disks are regular values under the evaluation map from interior pieces.

     The natural map (induced from $\pi_Y : \Lambda_2 \to L_Y$)  from the space of Reeb chords to $L_Y$ $$\cR(\Lambda_2) \to L_Y$$ is a covering map. We can construct the composition from moduli space of once punctured disks  $$\pi_Y\of ev_{\Gamma_\inn}: \cM^{up}_{\Gamma_\inn} (M_\inn) \to \cR(\Lambda_2) \to L_Y $$
    where $\Gamma_\inn$ is the combinatorial type corresponding to a once boundary punctured disk.
    Since $\cR(\Lambda_2) \to L_Y$ is a smooth covering map, we have that the points $p\in L_Y$ which appear as projection of Reeb chords in rigid broken building are regular values of $\pi_Y \of ev_{\Gamma_\inn}$.

     In the commutative diagram where we abuse $\pi_Y$ to denote the induced map from the projection,
\[
\begin{tikzcd}
\cM^{up}_{\Gamma_\inn}(M_\inn,\cL_\gamma) \arrow{r}{ev} \arrow[swap]{d}{\pi_Y} & \cR(\Lambda_2) \arrow{d}{\pi_Y} \\
\cM^{up}_{\mu=2,\Gamma_\inn}(Y,L_Y)  \arrow{r}{ev} & L_Y
\end{tikzcd}
\]
  the vertical maps are covering maps. Thus if $p\in L_Y$ is a regular value of $\pi_Y \of ev_{\Gamma_\inn}$, then it is a regular value of evaluation on moduli space of once boundary marked Maslov 2 disk. If not, then there would be a Maslov 2 disk $v$ for which the evaluation would be not submersion at that point, by taking a lift of $v$ using Theorem \ref{modelthm} we can construct a building such that the evaluation map is not a submersion at the puncture.

We now finish the proof by dealing with broken disks whose interior part is contained in a fiber over $Y$. The evaluation map at the interior puncture is clearly a surjective submersion to the space of Reeb chords.

There are finitely many rigid regular broken disks from  SFT compactness, (gluing) Theorem \ref{gluingglimpse} and area bound on rigid broken disks from monotonicity of the Lagrangian $L$ in $M$. Thus there are finitely many Reeb chords appearing as asymptotic ends of rigid regular broken disks. From a dimension count we see that of moduli space of outside disks with point constraint which appear in rigid broken disks is of dimension 0. We know that the evaluation map from the universal moduli space is a submersion. Thus, we can perturb $\cP_{out}$ to ensure the evaluation from the outside component lies outside the nowhere dense set of critical values.

\end{proof}

\begin{remark} 
    If we take a Biran circle bundle lift $L$ from a monotone symplectic divisor $X$ in a monotone symplectic manifold $M$,
    we can perform an exact isotopy of the $L$ bundle to ensure that the lifted Lagrangian 
    intersects the circle bundle over the divisor transversely such that we can perform a breaking along
    the circle bundle over 
\end{remark}

\subsection{Disk-Potential from SFT buildings}

Equipped with the bijection from Theorem \ref{gluingglimpse}, we can define a version of disk potential by counting broken disks. The definition begins with fixing a point $p\in L_\outp$ and counting rigid broken
disks which pass through $p$.
\begin{defn}
    Let $\cP$ be a perturbation scheme with enough regularity for broken disks with one outside constraint at $p$. We define the 
    broken disk potential as 
\begin{equation*}
        W^{Br}_{\bL} (x_1,\dots) = \sum_{[\mathfrak{u}] \in \cM_{0}{(\bM,\bL,\bD})} (sgn) x^{\partial u_g}
    \end{equation*}
where $u_g$ is obtained by a gluing of the broken disk $\mathfrak{u}$. Note that the [$\partial u_g$] does not depend on the gluing parameter because of exponential convergence to trivial strips near each puncture.
\end{defn}


\begin{Theorem}\label{thm:brokissame}
Broken disk-potential is equal to the usual disk potential.
\end{Theorem}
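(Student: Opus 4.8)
The plan is to route both potentials through a single neck-stretched manifold. Fix the symplectic embedding $\phi:[a,b]\times Z\to M$ of the BSP handle and perform the double neck-stretching of Section \ref{appendix} along $\phi(\{a',b'\}\times Z)$, producing the family $(M^T,J^T)$ and, after gluing a coherent perturbation scheme as in Definition \ref{def:domdepstruc}, the glued manifolds $M_\delta$. For every $T$ (equivalently every small $\delta$) the stretched/glued manifold is symplectomorphic to $(M,\omega)$ by a diffeomorphism supported in the neck region, so $L$ is the same monotone Lagrangian inside it, and the glued domain-dependent almost complex structure is an admissible choice for computing the ordinary disk potential. By the standard invariance of the disk potential of a monotone Lagrangian under the choice of regular (domain-dependent) almost complex structure, the signed count of rigid disks in $M_\delta$ with the glued perturbation equals $W_L$.

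Next I would invoke the regularity package of Section \ref{sub:reg}: by Theorem \ref{thm:regularity}, together with Lemma \ref{lemm:fixedininterior} and Proposition \ref{thm:goodrc}, there is a coherent perturbation scheme $\cP$ for broken disks with one outside point constraint that has enough regularity, so that every such broken disk is either rigid and regular or of strictly positive virtual dimension. In particular the broken manifold and broken Lagrangian carrying a rigid broken disk have no neck levels: as in the proof of Theorem \ref{classifthm}, a stable rigid broken disk cannot contain a neck piece, since the free $\R$-action on a neck level would produce a positive-dimensional family. Hence the hypotheses of the gluing theorem, Theorem \ref{gluingglimpse}, are met. Since $L$ is monotone with minimal Maslov number $2$, every disk contributing to $W_L$ has symplectic area equal to the monotonicity constant and $W_L$ is a finite sum; choose the energy cutoff $E$ above this common value. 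Theorem \ref{gluingglimpse} then gives, for $\delta$ small enough, a bijection $\mathfrak u\mapsto u_\delta$ between the rigid broken disks counted by $W^{Br}_{\bL}$ and the rigid glued-perturbed disks in $M_\delta$ of area $<E$, which are exactly the disks counted by $W_L$.

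It remains to check that the bijection respects the two pieces of data recorded by the potentials. The orientation sign is preserved because the gluing construction is compatible with the coherent orientations of the moduli spaces, which is part of the content of Theorem \ref{gluingglimpse} following \cite{PW19,Cha23}. For the boundary monomial, near every puncture node $u_\delta$ is exponentially close to a trivial Reeb strip, so $[\partial u_\delta]$ is independent of the gluing parameter and equals the broken boundary class $\partial\mathfrak u$; thus $x^{\partial u_\delta}=x^{\partial u_g}$ term by term. Summing over the bijection gives $W^{Br}_{\bL}(x_1,\dots)=W_{(M_\delta,\cP)}(x_1,\dots)=W_L(x_1,\dots)$, which is the claim.

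I expect the genuine difficulty to be entirely concentrated in the inputs already isolated above: constructing a coherent perturbation scheme that simultaneously achieves enough regularity for all broken configurations with a point constraint \emph{and} glues to an admissible perturbation on $M_\delta$ (Theorem \ref{thm:regularity}), and checking that the energy truncation $E$ in Theorem \ref{gluingglimpse} may be taken uniformly large enough to capture every Maslov-$2$ contribution on both sides (finiteness of $W_L$ by monotonicity and Gromov compactness). Granting these, the equality of the two potentials is just bookkeeping along the gluing bijection.
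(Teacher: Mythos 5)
Your proposal is correct and follows essentially the same route as the paper: an energy bound from monotonicity, the gluing bijection of Theorem \ref{gluingglimpse} identifying the broken count with the count for the glued perturbation scheme $\cP_\delta$ on $M_\delta\cong M$, and then invariance of the disk potential under the choice of regular (domain-dependent) perturbation. The only cosmetic difference is that where you cite this last invariance as standard, the paper spells it out via an oriented cobordism of the rigid point-constrained moduli spaces between $\cP_\delta$ and a domain-independent $J$.
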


\begin{proof}
    This proof is similar to the proof of Lemma 7.4 in \cite{Cha23}. From monotonicity of the Lagrangian we know that there is an upper bound on the energy of broken disks which appear in the broken potential. Thus, by Theorem \ref{glu} we have that for a $\delta-$gluing with a small $\delta$, there is a bijection between rigid regular disks. Moreover, the domain-dependent perturbation data $\cP_\delta$ obtained from gluing has enough regularity in the sense that all disks of virtual dimension zero are regular, and all other disks have higher virtual dimension. Let $W^{\cP_\delta}_L$ denote the disk-potential by counting rigid $\cP_\delta$-holomorphic disks with point constraint. The gluing bijection gives us $$W^{Br}_{\bL} =  W^{\cP_\delta}_L$$
    
    Recall that the disk-potential is obtained from the rigid Maslov 2 disks with point constraints. There is an oriented cobordism  between moduli space of rigid regular disks  with the perturbation scheme $\cP_\delta$ and the moduli space of rigid regular disks with some domain independent choice of $J$. The oriented cobordism can be thought of as a collection of disjoint copies of circles $S^1$ and intervals $[0,1]$. If the boundary of an interval piece lies on the same side of the cobordism, note that the contribution to the disk potential from these boundary components cancel out due to orientation. After such cancellations, we can take the boundary of the remain intervals to obtain $$W^{\cP_\delta}_{L^\delta} + -(W^{J}_{L^\delta}) = 0,$$ where $W^J_{L^\delta}$ refers to the disk-potential obtained from counting rigid $J-$holomorphic disks with point constraint. Combining the two equations above we have the required result.
\end{proof}

\section{Proof of Theorem \ref{mainthm}}

The neck-stretching technique in Symplectic Field Theory as introduced in \cite{egh00,BEH03} has proved to be a robust tool in Floer theory. See \cite{mw18,mw19,PW19,Via16,Cha23} for applications of neck-stretching to computing Floer theoretic invariants. In this section, we use counts of broken holomorphic disks (also known as holomorphic buildings of disks) to compare the disk-potential of a Lagrangian with its zero-area BSP surgery to prove Theorem \ref{mainthm}.  We postpone the technical discussion on SFT neck-stretching to Section \ref{appendix}. One can read Section \ref{sftbackground} before proceeding to the rest of the section to get familiarized with the necessary ideas revolving neck-stretching.

The main slogan for the heuristic of using neck-stretching in our context is as follows:
\vspace{5pt}
\begin{fslogan}
    Counts of rigid holomorphic buildings  can be used to define disk-potential. 
\end{fslogan}
The slogan is based on the SFT compactness theorem and the following gluing theorem which asserts that after assuming sufficient regularity, counts of rigid broken disks match with rigid unbroken disk of a neck-stretched manifold.

\begin{Theorem}[Theorem 4.25 \cite{Cha23}, Theorem 6.25 in \cite{PW19}]\label{gluingglimpse}
    Let $\mathbb M$ be a broken symplectic manifold with no neck levels and $\mathbb L$  be a broken Lagrangian with no neck levels.  Assume perturbations have been chosen so that every strip in $\mathcal{M}^{<E}_{Br,\mathcal{P}}(\mathbb{M},\mathbb{L},D)_0$ is regular. There exists $\delta_0$ such that for $ \delta <\delta_0$ the correspondence $\mathfrak{u} \mapsto u_\delta$ from gluing defines a bijection between the moduli spaces $\M^{<E}_{Br,\mathcal{P}}(\mathbb{M},\mathbb{L})_0$ and $\mathcal M ^{E_{Hor}< E}(M_{\delta},L)_0$.

$$\M^{<E}_{Br,\mathcal{P}}(\mathbb{M}[l],\mathbb{L}[l])_0 \leftrightarrow \mathcal M ^{E_{Hor}< E}(M_{\delta},L)_0$$
\end{Theorem}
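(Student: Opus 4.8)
The plan is to reproduce the SFT gluing-and-compactness argument of \cite[Theorem~6.25]{PW19} and \cite[Theorem~4.25]{Cha23}, of which the statement here is a version, so I will describe its structure rather than grind through the analysis. There are two halves: a \emph{gluing} construction producing the correspondence $\mathfrak u\mapsto u_\delta$, and a \emph{compactness} argument showing it is a bijection. I would start from a rigid broken disk $\mathfrak u\in\M^{<E}_{Br,\cP}(\bM,\bL)_0$: since $\bM$ and $\bL$ have no neck levels, $\mathfrak u$ is a finite collection of honest level maps (disks in the completed pieces of $\bM$ with boundary on the pieces of $\bL$) matched along Reeb chords and orbits of the perturbation data $\cP$. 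From this I would build an approximate $J_\delta$-holomorphic map $u_\delta^{pre}\colon(\bD,\partial\bD)\to(M_\delta,L)$ by inserting finite necks of length $\sim\log(1/\delta)$ and splicing the level maps with cut-off functions written in the cylindrical/strip-like coordinates near the matched asymptotics; the pre-gluing error $\|\delbar_{J_\delta}u_\delta^{pre}\|$ then decays exponentially in the neck length as $\delta\to0$.

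The second step is the uniform linear estimate, and this is the technical heart. Working in exponentially weighted Sobolev spaces (small positive weights along the necks), the hypothesis that every strip in $\M^{<E}_{Br,\cP}(\bM,\bL,D)_0$ is regular gives surjectivity of the linearized operators at the level maps; since $\mathfrak u$ is rigid (total Fredholm index $0$, trivial kernel once the matching conditions, the point constraint $D$, and the domain automorphisms are accounted for), a Mayer--Vietoris / approximate-right-inverse patching assembles from these a right inverse $Q_\delta$ of the glued linearized operator $D_{u_\delta^{pre}}$ on $M_\delta$ with $\|Q_\delta\|$ bounded \emph{uniformly in} $\delta$; the finite-dimensional mismatch in the patching is controlled using the asymptotic operators at the Reeb chords/orbits (so one needs $\cP$ to make those non-degenerate, or to treat them in Morse--Bott families). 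Given a uniformly bounded $Q_\delta$ and the decaying error, the quantitative implicit function theorem (Picard iteration) yields, for $\delta<\delta_0$, a unique genuine $J_\delta$-holomorphic $u_\delta$ in a ball of $\delta$-independent radius around $u_\delta^{pre}$; one then checks $u_\delta$ is independent of the splicing choices, varies continuously with $\mathfrak u$, and is itself regular --- so $\mathcal M^{E_{Hor}<E}(M_\delta,L)_0$ is a finite set.

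To see the map $\mathfrak u\mapsto u_\delta$ is a bijection: injectivity holds because two distinct broken disks of the same combinatorial type have pre-gluings at definite distance in the relevant norm while the Picard corrections stay within small balls. Surjectivity uses SFT compactness \cite{BEH03, egh00}: for any sequence $\delta_j\to0$ and $u_{\delta_j}\in\mathcal M^{E_{Hor}<E}(M_{\delta_j},L)_0$, a subsequence converges to a holomorphic building over the fully stretched manifold, and the horizontal-energy truncation $E_{Hor}<E$ together with ``no neck levels in $\bM,\bL$'' and the index-$0$ constraint forces that limit to be (the decomposition of) a rigid broken disk $\mathfrak u$ of the prescribed type and nothing more; hence for $j$ large $u_{\delta_j}$ sits in the ball about $u_{\delta_j}^{pre}(\mathfrak u)$ and therefore equals the unique solution there. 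A routine but bookkeeping-heavy addendum, needed for the application in Theorem~\ref{mainthm}, is to show the bijection is sign-preserving for the coherent orientations, by gluing determinant lines.

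The main obstacle is exactly the uniform linear estimate --- establishing $\|Q_\delta\|$ bounded independently of $\delta$ --- together with the SFT-compactness input for surjectivity; these are where the ``no neck levels'' hypothesis and the non-degeneracy arranged by $\cP$ are essential, and where degenerate Reeb orbits or non-regular multiple covers would obstruct the argument. In practice, since the statement is essentially \cite[Theorem~4.25]{Cha23} (itself based on \cite[Theorem~6.25]{PW19}), I would simply invoke those references, after checking that the neck-stretchings of $(M,L)$ and $(M,BSP_0(L))$ used in the proof of Theorem~\ref{mainthm} fall within their setup --- which they do, using exactness of the BSP handle and the ERD hypothesis (Assumption~\ref{ass:diskpot}), with the technical details collected in Section~\ref{appendix}.
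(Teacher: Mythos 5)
Your proposal is correct and follows essentially the same route as the paper: the paper does not prove Theorem \ref{gluingglimpse} at all, but quotes it as \cite[Theorem 4.25]{Cha23} and \cite[Theorem 6.25]{PW19} and explicitly defers to \cite[\S 6.5]{PW19} for the gluing analysis, which is exactly what you do after sketching the standard pre-gluing/uniform right inverse/Picard iteration and SFT-compactness surjectivity argument. Your closing check that the BSP neck-stretching setup (exactness of $\cL_\gamma$, regularity arranged by $\cP$) fits the hypotheses of those references is the same verification the paper carries out in Section \ref{appendix}.
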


Let $L$ be a compact monotone oriented spin Lagrangian in $M$  with a BSP Handle $\phi(\cL_\gamma)$. Recall from Definition \ref{bsphandle} that $\phi : [a,b]\times Z$ is a symplectic embedding. A zero-area BSP surgery modifies the Lagrangian $L$ only inside the neighborhood $\phi([a,b] \times Z)$. 

By using the neck-stretching heuristic, one only needs to understand the change of disk-count in the ``interior"  of the neck-stretching process. We postpone the detailed discussion of moduli space of holomorphic buildings to Section \ref{appendix}.

\subsection{Classification of Rigid Broken Disks}

In this subsection, we will study the moduli space of  rigid broken holomorphic disks with boundary on Lagrangians with Bohr-Sommerfeld Profile.

\begin{defn}[Positive and Negative minimal Reeb Chord]
    We call a Reeb chord $c: [0,1] \to Z$, between $\Lambda$ and $e^{i\frac{\pi}{k}}\Lambda$, a positive minimal Reeb chord, if it is of length $\pi/k$ , $c(0) \in  \Lambda \text{ and } c(1) \in e^{i\frac{\pi}{k}}. $ If the Reeb chord is of length $\pi/k$ and $c(0)\in e^{i\frac{\pi}{k}}\Lambda, c(1) \in \Lambda,$  we call $c$ to be a negative minimal Reeb chord.
\end{defn}

\begin{remark}
    Holomorphic maps from disks with single boundary puncture to $\bR \times Z$ with boundary of $\cL_\gamma$ play an important role throughout the article. Counts of such disks contribute to the disk-potential. As a convention, we use the identification of the upper half plane with the disk with a single boundary puncture, $\bH \cong \bD^2 \sm \{1\} $.
\end{remark}

For the rest of the section, we will use a judicious  choice of perturbation scheme for moduli space of holomorphic buildings, as done in \S \ref{sub:reg}.We now state the main result of the subsection, which classifies rigid holomorphic broken disks. 

\begin{Theorem}\label{classifthm}
    Assuming a perturbation scheme with enough regularity has been chosen for broken maps, if $\mathfrak{u}$ is a rigid broken disk with boundary on $L$ or $BSP_0(L)$, then \begin{itemize}
        \item $u_{in}$ is a single punctured disk,
        \item $\pi_Y \of u_{in}$ is either a constant map or a Maslov 2 map,
    \end{itemize}
Moreover, if the boundary of $u_{in}$ is $L$ (or $BSP_0(L)$), we have the following result about the asymptotic Reeb chord and the single boundary puncture,
\begin{itemize}
    \item $u_{in}$ is asymptotic to a positive (negative) minimal Reeb chord iff $\pi_Y\circ u_{in}$ is a constant map,
    \item  $u_{in}$ is asymptotic to a negative (positive) minimal Reeb chord iff $\pi_Y \of u_{in}$ is a Maslov 2 disk.
\end{itemize}
\end{Theorem}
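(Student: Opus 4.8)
The plan is to analyze a rigid broken disk $\mathfrak{u}$ by using the cylindrical structure on the neck region $[a,b]\times Z$ and the projection $\pi_Y \colon \bR\times Z \to Y$, which is pseudoholomorphic for the cylindrical almost complex structure $J_c$. First I would recall the index/energy bookkeeping: monotonicity of $L$ (Assumption A.1) forces every nonconstant disk to have positive symplectic area and hence positive Maslov index, and since the minimal Maslov number is $2$, the total Maslov index of $\mathfrak{u}$ (summing over levels, accounting for Reeb chord gradings) must be exactly $2$ for $\mathfrak{u}$ to be rigid. The dimension formula for broken configurations (to be set up properly in Section \ref{appendix}) says that a rigid building has virtual dimension $0$; each extra level, each extra puncture, and each extra nonconstant plane/disk component contributes nonnegatively to a ``defect'' that must be absorbed. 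The strategy is to show that the only way to have total Maslov $2$ and virtual dimension $0$ simultaneously is the claimed configuration: a single ``interior'' piece $u_\inn$ which is a once-punctured disk whose $\pi_Y$-projection is either constant or a single Maslov $2$ disk in $Y$, capped by the trivial pieces coming from the rest of $L$ outside the handle.

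The key steps, in order: (1) Establish that $u_\inn$ — the component(s) of the building living in the symplectization $\bR\times Z$ with boundary on $\cL_\gamma$ — has exactly one boundary puncture. This uses the fact that $\cL_\gamma$ is exact (the Proposition on exactness of the BSP handle with primitive $\widetilde f_\gamma$ depending only on the $[0,1]$-coordinate), so by Stokes the energy of any symplectization piece is a difference of Reeb-chord actions, and any piece with $\geq 2$ punctures either has too much energy or forces extra positive-dimensional moduli, violating rigidity; a once-punctured disk is the minimal configuration that can still glue to a rigid disk in $M$. (2) Push $u_\inn$ down by $\pi_Y$: since $\pi_Y$ is $J_c$-holomorphic and the $\bR$-translation and $S^1$-actions act on the fibers, $\pi_Y\circ u_\inn$ is a holomorphic disk in $Y$ with boundary on $L_Y$ (using that $\cL_\gamma$ is a parallel transport of $\gamma^{1/k}$ over $L_Y$, so its image under $\pi_Y$ is exactly $L_Y$). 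Its Maslov index equals that of $u_\inn$ minus the contribution of the asymptotic Reeb chord; combining with step (1)'s Maslov-$2$ budget, $\pi_Y\circ u_\inn$ has Maslov index $0$ or $2$, i.e.\ it is constant (by monotonicity, Maslov $0$ closed-up disk has zero area hence is constant) or a Maslov $2$ disk. (3) Match asymptotics to the dichotomy: the grading of the minimal Reeb chord between $\Lambda$ and $e^{i\pi/k}\Lambda$ is computed from the rotation/Conley–Zehnder data — one orientation (``positive'') of the minimal chord has the grading that exactly accounts for a Maslov-$2$ deficit, so that $\pi_Y\circ u_\inn$ must carry the remaining $0$ (constant case); the opposite orientation (``negative'') contributes $0$ to the deficit, forcing $\pi_Y\circ u_\inn$ to be the Maslov $2$ disk. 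This is where the vanishing of the Maslov class on Bohr–Sommerfeld covers (cited from \cite[\S4.2]{rg19}) and the explicit length $\pi/k$ of the minimal chord enter.

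I expect the main obstacle to be step (1): ruling out broken disks with more boundary punctures, or with nonconstant components in the symplectization levels between the ``interior'' piece and the rest of $M$, purely from the dimension and energy constraints. The subtlety is that the asymptotic Reeb chords can have negative grading contributions in principle, so a naive count of ``each puncture costs something'' is not quite enough; one needs the precise statement that the only Reeb chords of $\Lambda_2$ below the relevant action threshold are the minimal ones of length $\pi/k$ (which follows from shrinking $[a,b]$ as in Remark \ref{rem:ctnyingamma} so that longer chords have action exceeding the disk energy budget), and then a careful SFT compactness/gluing argument — leaning on Theorem \ref{gluingglimpse} — to see that any admissible rigid configuration degenerates to the claimed normal form. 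A secondary technical point is verifying regularity of $\pi_Y\circ u_\inn$ as a disk in $Y$: this is where the ERD property (Assumption A.6) and the choice of a regular $J_Y$ are invoked so that the Maslov $2$ disks downstairs form a transversally cut out $0$-dimensional space, which is what ultimately makes the count well-defined and matches $W_{L_Y}$.
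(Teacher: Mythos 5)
Your outline gets the statement's shape right, but the step you yourself flag as the obstacle -- ruling out interior components with more than one boundary puncture and non-minimal asymptotic chords -- is exactly where your proposed mechanism diverges from what actually works, and as written it has a genuine gap. You propose to exclude extra punctures by an action/energy threshold (``only the minimal chords of length $\pi/k$ fit under the energy budget,'' achieved by shrinking $[a,b]$ as in Remark \ref{rem:ctnyingamma}). But shrinking $[a,b]$ does not change the Reeb chord spectrum of $\Lambda_2$ in $(Z,\lambda)$: chords have lengths $l\pi/k$ for all $l$, independently of the neck width, and Remark \ref{rem:ctnyingamma} is about rationality of the primitive $\widetilde f_\gamma$, not about pushing long chords above an energy bound. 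Moreover the Maslov-$2$ energy budget of a disk in $M$ need not be comparable to $2\pi/k$, so ``too much energy'' does not follow. The paper's route is different: by Lemma \ref{anypathhassinglepunct} (whose proof is where the ERD property is really used, via Theorem \ref{modelthm} and the lifting results of \cite{Cha23}) there exists a regular single-punctured disk asymptotic to \emph{any} given Reeb chord; Lemma \ref{lemm:indexlowering} then swaps a multiply-punctured interior component for such a disk, strictly lowering area and hence Maslov index, so in a perturbation scheme fixed in the interior (Lemma \ref{lemm:fixedininterior}) any configuration with a multi-punctured interior piece has positive virtual dimension, contradicting rigidity after a Gromov-compactness argument for schemes $\cP$ close to $\cP_0$. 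Non-minimal chords are excluded not by action but by the index formula of Lemma \ref{lemm:singlepuncindex}, $\Ind(D_u)=n+l$, which forces a positive-dimensional kernel of the evaluation at the puncture when $l>1$, again contradicting rigidity. Note also that you assign ERD the wrong job: regularity of $\pi_Y\circ u_\inn$ downstairs comes from choosing a regular $J_Y$ (and Lemma \ref{regularityfromregularity} lifts regularity to sections); ERD is needed to \emph{produce} the replacement single-punctured disks through prescribed points.

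Two smaller omissions: you do not rule out neck levels (the paper dispatches this first -- stability forces a nontrivial strip or cylinder in a neck level, and its $\bR$-translation symmetry is incompatible with rigidity), and your step (3) does not engage with the asymmetry in the statement, namely that the positive/negative chord dichotomy is swapped between $L$ and $BSP_0(L)$; this comes from the two conjugate path classes $\gamma$ versus $\wt\gamma$ and is obtained in the paper by reducing, via the model bundle pair Theorem \ref{modelthm}, to the explicit classification of sections in the Clifford model case (\cite{Cha23}, Theorem 6.5), rather than by a Conley--Zehnder grading computation, which you sketch but do not carry out.
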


Assuming Theorem \ref{classifthm} and the development of neck-stretching theory in Section \ref{appendix}, we provide a proof for the change of disk potential formula in Theorem \ref{mainthm}.

\begin{proof}[Proof of Theorem \ref{mainthm}]
    By assumption, we know that $L$ has a Bohr-Sommerfeld-Profile handle modelled on an admissible curve. Let $\phi: [a,b] \times Z \to M$ be the associated embedding such that $\phi\inv(L) = \cL_\gamma$ for an admissible path $\gamma.$ We perform a double neck-stretching along the contact-type hypersurface with two connected components, $\phi (\{ a,b\} \times Z)$. From the regularity results in Section \ref{sub:reg},  Theorem \ref{thm:regularity} and Proposition \ref{thm:goodrc},  we can assume that there is a perturbation scheme for broken holomorphic disks with boundary on $L$ such that any broken disk is either rigid and regular or is of too high index. Moreover, we can assume that the almost complex structure in $\bR \times Z$ is the cylindrical complex structure induced from $J_Y$ where $J_Y$ is a complex structure on $Y$ for which the space of Maslov 2 disks is regularly cut out. Now from Theorem \ref{thm:brokissame} we see that it is enough to compare the broken disk potential of $L$ with $BSP_0(L)$. Finally, Theorem \ref{classifthm} shows that there is a correspondence between rigid broken disks with boundary $L$ and $BSP_0(L)$. Let $( u_\inn,u_\outp)$ be a rigid regular disk with boundary on $L$,
    \begin{itemize}
        \item if $u_\inn$ is asymptotic to a positive minimal Reeb chord $c$, then we can construct $|ev\inv ( \pi_Y (c))|$ many rigid regular broken disks $\mathfrak{v_i}= (v^{(i)}_\inn, u_\outp)$ with boundary on $BSP_0(L)$ where $v^{(i)}_\inn$ are lifts of Maslov 2 $J_Y$ holomorphic disks in $Y$ passing through $\pi_Y (c),$
        \item if $u_\inn$ is asymptotic to a negative minimal Reeb chord $c$, then
        we can form a family of $|ev\inv(\pi_Y(c))|$ many rigid regular disks $\mathfrak{u}_i$ with boundary on $L$ by taking lifts of Maslov 2 disks such that $\mathfrak{u}_1$ is $(u_\inn, u_\outp)$ and this family corresponds to a single rigid broken disk $(v_\inn, u_\outp)$ with boundary on $BSP_0(L).$
    \end{itemize}
We can perform the above replacement argument because of the choice of perturbation scheme as in Theorem \ref{thm:goodrc}. Now by comparing the change in boundary homology cycle under the above correspondence, we see that 
$$  W_{BSP_0(L)}(x_1,\dots,x_k,z,w_1,\dots,w_l)
 =W_L(x_1,\dots,x_k,zW_{\Lambda}(x_1,\dots,x_k),w_1,\dots,w_k).  $$
    
\end{proof}

\subsection{Model Bundle Pairs}

To prove Theorem \ref{classifthm} we start off with studying holomorphic disks in the `inside' piece of holomorphic buildings. From now on, until the end of this section, $\Sigma$ will denote a disk with punctures. Notice that any $J_{cyl}$ holomorphic map
$$u:\Sigma \to (\bR \times Z, \cL_\gamma),$$   with finite Hofer energy, descends to a holomorphic disk $\overline{ u} = \pi_Y \of u$ with boundary $L_Y.$

On the other hand, given any $J-$holomorphic map $\overline{u}: \bD^2 \to (Y,L_Y)$, we can construct the following pull-back bundle pair $$(\overline{u}^* (\bR \times Z), \partial{\overline{u}}^* \cL_\gamma) \to (\bD^2,S^1).$$ Here we take the pullback as a $\bC^*$ bundle along with the connection one-form $\lambda$ and $\partial{\overline{u}}^* \cL_\gamma$ is defined as the following fiber-product, $$\partial{\overline{u}}^* \cL_\gamma=\left\{ (p,z) \in \partial\bD^2 \times \bR \times Z | z\in \cL_\gamma|_{\overline{u}(p)}  \right\}.$$ One can check that $\partial{\overline{u}}^* \cL_\gamma$ can be identified (up to scaling) as the parallel-transport of $\gamma^{1/k}$ in a fiber over a point on $\partial\bD^2$. Recall that $\bR \times Z$ has an induced complex linear connection from the connection $\lambda$ on $Z$. Indeed, the parallel transport maps in $\bR \times Z$ are just multiplication by $e^{i\theta}$.  The pullback bundle $\overline{u}^* (\bR \times Z)$ has an induced complex linear connection from $\bR \times Z$. We use this complex linear connection to view the pullback bundle $\overline{u}^*(\bR \times Z)$ as a holomorphic bundle.

Any holomorphic map $u:\Sigma \to (\bR \times Z, \cL_\gamma)$ can be realized as a section of the holomorphic bundle
\begin{equation*}
(\overline{u}^* (\bR \times Z), \partial{\overline{u}}^* \cL_\gamma) \to (\bD^2,S^1).
\end{equation*}
From the Oka-Grauert principle, we know that any holomorphic line bundle over $\bD^2$ is a trivial bundle. Thus, we have a trivialization \begin{equation}\label{initialiso}  
(\overline{u}^* (\bR \times Z), \partial{\overline{u}}^* \cL_\gamma) \cong (\bD^2 \times \bC^*, f(\theta) \gamma^{1/k} )
\end{equation}where $f:[0,2\pi] \to \bC^*$ such that, $$f(0)=0, f(2\pi) = e^{i2\int_{\overline{u}}\omega_Y}.$$ From Assumption \ref{as1}, we know $\int_{\overline{u}}\omega_Y  = \frac{\pi l}{k}$ for some $l \in \bZ$. Here $f$ is given by parallel transport along the boundary.

Since $L_Y$ is a monotone Lagrangian, we have that $$\int_{\overline{u}}\omega_Y = \alpha \mu(\overline{u}),$$ for a constant $\alpha.$ Also, since $L_Y$ has minimal Maslov number 2, we can suppose that $\alpha = \frac{m\pi}{2k}$ where area of a Maslov 2 disk is $\frac{m \pi}{k}.$ We can also define $\frac{m \pi}{k}$ to be the smallest possible positive $\omega_Y$ area of a disk with boundary on $L_Y$. Thus, we know that $$f(2\pi)=e^{i\frac{m\pi}{k} \mu(\overline{ u})}.$$ With a slight abuse of notation, we can  define a map $f^k : S^1 \to \bC^*$ as $f^k([\theta]) = f(\theta)^k$ where we identify $S^1 \cong [0,2\pi]/ \{  0 \sim 2\pi \}$. This is well-defined as $L_Y$ is orientable thus Maslov number of disks are even. The following theorem proves that the bundle pairs $(\overline{u}^* (\bR \times Z), \partial{\overline{u}}^* \cL_\gamma)$ depend only on the Maslov number of $\overline{u}$ and the curve $\gamma$.
\begin{Theorem}[Model Bundle Pairs]\label{modelthm}
    Let $\overline{u} : \bD^2 \to (Y,L_Y)$ be a $J$ holomorphic map. We have an isomorphism of holomorphic line  bundles which induces an isomorphism on the following pairs,
    $$(\overline{u}^* (\bR \times Z), \partial{\overline{u}}^* \cL_\gamma) \cong (\bD^2 \times \bC^*, e^{i\frac{m\theta}{2k}\mu(\overline{u})} \gamma^{1/k} ),$$ where $\mu(\overline {u})$ is the Maslov number and $\frac{m\pi}{2k}$ is the monotonicity constant.
\end{Theorem}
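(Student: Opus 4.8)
The strategy is to combine the trivialization from equation \eqref{initialiso} with a gauge transformation that normalizes the boundary-monodromy map $f$ to the standard exponential $e^{i\frac{m\theta}{2k}\mu(\overline u)}$, using that holomorphic line bundles over the disk are trivial and that the space of such trivializations is connected. The point is that a bundle pair $(\bD^2\times\bC^*, f(\theta)\gamma^{1/k})$ over $(\bD^2, S^1)$ is classified, up to isomorphism of holomorphic bundle pairs fixing the fiberwise $\bC^*$-structure, by the homotopy class of the loop $\theta\mapsto f(\theta)$ in $\bC^*$, i.e.\ by its winding number, together with the curve $\gamma$ itself (the curve is carried along rigidly by the identification). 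So the content of the theorem is really: the winding number of $f$ equals $\frac{m}{2k}\mu(\overline u)$, and any two $f$'s with the same winding number give isomorphic pairs.

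First I would make precise the claim that, over the disk, the isomorphism type of the pair depends only on $[f]\in\pi_1(\bC^*)\cong\bZ$. Given $f_0,f_1:[0,2\pi]\to\bC^*$ with $f_i(0)=1$ (after rescaling — the monotone area computation below fixes $f(2\pi)$, not $f(0)$, so one first divides by $f(0)$) and the same winding number, the loop $f_1 f_0^{-1}$ is null-homotopic in $\bC^*$, hence by Oka–Grauert / the fact that $\bC^*$-valued functions on $\bD^2$ that are null-homotopic on the boundary extend holomorphically (via $\bC^*=\exp(\bC)$ and solving $\overline\partial$), there is a holomorphic $g:\bD^2\to\bC^*$ with $g|_{S^1}=f_1f_0^{-1}$. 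Multiplication by $g$ is the desired gauge transformation; it sends the boundary condition $f_0(\theta)\gamma^{1/k}$ to $f_1(\theta)\gamma^{1/k}$ because $g$ is fiberwise multiplication by a scalar, and the set $\gamma^{1/k}$ is invariant under... no — here I must be careful: $g(\theta)\cdot(f_0(\theta)\gamma^{1/k})$ is literally $f_1(\theta)\gamma^{1/k}$ as a set, since $g(\theta)=f_1(\theta)/f_0(\theta)$ pointwise scales the set $f_0(\theta)\gamma^{1/k}$ to $f_1(\theta)\gamma^{1/k}$. So no invariance of $\gamma^{1/k}$ under multiplication is needed; the scalar is exactly tuned.

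Second, I would compute the winding number of $f$. By the already-established formula $f(2\pi)=e^{i\frac{m\pi}{k}\mu(\overline u)}$ (and $f(0)=1$ after normalization), the loop $f^k:S^1\to\bC^*$ defined via $[\theta]\mapsto f(\theta)^k$ has $f^k(2\pi)=e^{i m\pi\mu(\overline u)}$; since $\mu(\overline u)$ is even (orientability of $L_Y$), this is $1$, so $f^k$ is a genuine loop, and its winding number is $\frac{m}{2}\mu(\overline u)$ (reading off the total phase change $m\pi\mu(\overline u)$). Hence $f$ itself winds $\frac{m}{2k}\mu(\overline u)$ times — this is an integer precisely because $k\mid \frac{m}{2}\mu(\overline u)$, which is where Assumption \ref{as1} (areas lie in $\frac{\pi}{k}\bZ$-multiples) enters: it guarantees $f$ closes up over $S^1$ after going around once, i.e.\ that $\frac{m}{2k}\mu(\overline u)\in\bZ$. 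Then the standard model $e^{i\frac{m\theta}{2k}\mu(\overline u)}$ has exactly this winding number, and the first step produces the isomorphism.

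The main obstacle I anticipate is the bookkeeping around the base point normalization and making rigorous the claim that the gauge transformation respects the boundary condition as a \emph{Lagrangian} bundle pair, not merely a subset: one must check that multiplication by the holomorphic $g$ is a bundle automorphism preserving the relevant structure (the symplectic form $d(e^s\lambda)$ pulled back, or at least the property that fibers of $\cL_\gamma$ are totally real of the right dimension), which follows because the $\bC^*$-action on $\bR\times Z$ is by symplectomorphisms and $g$ takes values in $\bC^*$. A secondary subtlety is ensuring the identification of $\partial\overline u^*\cL_\gamma$ with $f(\theta)\gamma^{1/k}$ — i.e.\ that parallel transport along $\partial\bD^2$ is literally multiplication by $e^{i(\text{phase})}$ with total phase $2\int_{\overline u}\omega_Y$ — which is exactly the curvature computation $d\lambda=\pi^*\omega_Y$ integrated over the disk via Stokes, already invoked in the discussion preceding \eqref{initialiso}. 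I would present the winding-number computation as the conceptual heart and relegate the gauge-transformation existence to a citation of Oka–Grauert plus the connectedness argument.
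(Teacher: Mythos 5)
Your overall plan --- trivialize via \eqref{initialiso}, then normalize the boundary monodromy $f$ by a holomorphic gauge transformation, with the exponent pinned down by the area/Maslov relation --- is the same in outline as the paper's, but two of your steps have genuine gaps. The central one is the gauge step: it is \emph{not} true that a winding-zero smooth map $S^1\to\bC^*$ is the boundary value of a nonvanishing holomorphic function on $\bD^2$. Boundary values of holomorphic functions have no negative Fourier modes (e.g.\ $2+e^{-i\theta}$ has winding zero but is the boundary value of no holomorphic function on the disk), and in the unimodular situation the maximum principle applied to $g$ and $1/g$ forces any nonvanishing holomorphic $g$ with $|g|=1$ on $S^1$ to be constant. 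So you cannot ``tune the scalar exactly'' so that $g(\theta)f_0(\theta)\gamma^{1/k}=f_1(\theta)\gamma^{1/k}$ on the nose; what is actually solvable is prescribing only the boundary \emph{argument} of $g$ (harmonic extension of the argument plus its harmonic conjugate), which leaves an uncontrolled positive real factor, and handling that leftover factor against a boundary condition that is a curve rather than a real line is the real work. This is precisely why the paper proceeds differently: it first corrects the modulus $|f^k|$ by the harmonic-extension/harmonic-conjugate trick and then invokes Oh's Riemann--Hilbert normal form (Theorem 1 of \cite{riemHilb}) for the phase; Oka--Grauert enters only for the initial triviality of the bundle, not for exact boundary prescription. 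Your proposal collapses at exactly the step you planned to ``relegate to a citation.''

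Second, your integrality bookkeeping is wrong, and it hides the role of the paper's $k$-th power reduction, which is absent from your plan. Assumption \ref{as1} gives $\int_{\overline{u}}\omega_Y\in\frac{\pi}{k}\bZ$, i.e.\ $\frac m2\mu(\overline{u})\in\bZ$ (automatic since $\mu$ is even); it does \emph{not} give $\frac{m}{2k}\mu(\overline{u})\in\bZ$. In the motivating examples (Clifford profile, Maslov-two disks) the exponent $\frac{m}{2k}\mu=\frac mk$ is genuinely fractional, $f$ is not a loop ($f(2\pi)=e^{2i\int_{\overline{u}}\omega_Y}$ is in general a nontrivial $k$-th root of unity), and the boundary family $f(\theta)\gamma^{1/k}$ closes up only because $\gamma^{1/k}$ is invariant under multiplication by $k$-th roots of unity --- not because $f$ closes up. The paper deals with this by passing to the $k$-th tensor power, where $f^k$ is an honest loop and the boundary curve is $\gamma$ (with no $\bZ_k$-symmetry), and then taking a $k$-th root of the resulting isomorphism using simple connectivity of $\bD^2$; without this reduction your ``classification by the winding number of $f$'' is not even well posed. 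Relatedly, reading the winding of $f^k$ off its endpoint value determines the total phase change only modulo $2\pi$; identifying the integer as $\frac m2\mu(\overline{u})$ is the actual Maslov/area computation, which in the paper is carried by the monotonicity normalization together with Oh's normal form, not by the endpoint alone.
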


\begin{proof}
    From Equation (\ref{initialiso}) we have 
    \begin{equation*}
(\overline{u}^* (\bR \times Z), \partial{\overline{u}}^* \cL_\gamma) \cong (\bD^2 \times \bC^*, f(\theta) \gamma^{1/k} ).
\end{equation*} So it is enough to prove the following isomorphism,
\begin{equation}\label{secondiso}
(\bD^2 \times \bC^*, f(\theta) \gamma^{1/k} ) \cong (\bD^2 \times \bC^*, e^{i\frac{m\theta}{2k}\mu(\overline{u})}\gamma^{1/k} ).
\end{equation}
Notice that it is enough to prove isomorphism between the $k^{th}$ tensor power of each bundle pair, i.e.,
\begin{equation}\label{thirdiso}
(\bD^2 \times \bC^*, f^k(\theta) \gamma ) \cong (\bD^2 \times \bC^*, e^{i\frac{m\theta}{2}\mu(\overline{u})}\gamma ).
\end{equation}
Indeed if the isomorphism in Equation \ref{thirdiso} is given by a holomorphic map $h : \bD^2 \to \bC^*,$ then we have a $k^{th}$ root, $h^{1/k}$, of $h$ since $z \mapsto z^k$ is a $k:1$ covering of $\bC^*$ and $\bD^2$ is simply connected. Thus $h^{1/k} : \bD^2 \to \bC^*$ can serve as an isomorphism in Equation \ref{secondiso}.

Now to obtain the isomorphism in Equation \ref{thirdiso}, we first show that by changing trivialization, we can assume $|f^k| \equiv 1$, i.e., $f^k : S^1 \to S^1 \hookrightarrow \bC^*.$ Denote the function $|f^k (\theta)|$ as $ r(\theta). $ Thus we have $$
\log (r) : S^1 \to \bR^+,$$ a smooth function. We can extend it harmonically to $\bD^2.$ By using the harmonic conjugate of the extension as an imaginary coordinate we thus have a holomorphic function $\wt h : \bD^2 \to \bC$ such that $Re(\wt h)|_{\partial \bD^2} = \log (r).$ By setting $h= e^{\wt h} : \bD^2 \to \bC^*,$ we see that $|h| = r = |f|^k$ as functions on $S^1$. Thus, by changing the trivialization using $h$, we can assume $|f^k| = 1.$

Now we can directly apply Oh's Theorem 1 in \cite{riemHilb} to find a change of trivialization $g:\bD^2 \to \bC^*$ such that $g(\theta).f^k(\theta)= e^{i\frac{m\theta}{2}\mu(\overline{u})}$, which will serve as an isomorphism in Equation \ref{thirdiso}.
\end{proof}

The upshot of Theorem \ref{modelthm} is that finding holomorphic maps from punctured disk to $\bR \times Z$ with boundary on $\cL_\gamma$ becomes equivalent to solving a ``model" problem which depends on finding sections of bundle-pairs of the form $(\bD^2 \times \bC^*, e^{i\frac{m\theta}{2k}\mu(\overline{u})} \gamma^{1/k} )$ which depends solely on $\gamma$ and the Maslov number. See \S 2 \cite{bcsw2} for a similar approach to classify holomorphic disks. By using Theorem \ref{modelthm}, we can generalize many classification-of-holomorphic-disk results from \cite{Cha23}. We collect some of them here.

\begin{lemma}\label{lemm:singlepuncindex}
    If $u:\bH \to (\bR \times Z, \cL_\gamma) $ is a single-punctured disk with the puncture being asymptotic to a Reeb chord of length $l \frac{\pi}{k}$, i.e., $l$ multiple of the smallest possible Reeb chord, then the index of the linearized Fredholm operator $D_u$ is $n+l$, $$\Ind(D_u) = n + l.$$
    Here $D_u$ is the linearized CR operator at $u$ with weighted Sobolev domain $W^{k,p,\delta}$ for $0 < \delta < \pi/k$, see Section \ref{sec:Freddy}.
\end{lemma}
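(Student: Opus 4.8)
The plan is to compute the Fredholm index of the linearized operator $D_u$ by exploiting the model description of the bundle pair $(\bar u^*(\bR\times Z),\partial\bar u^*\cL_\gamma)$ established in Theorem \ref{modelthm}, together with the splitting of the linearized CR operator coming from the fact that $\pi_Y\colon \bR\times Z\to Y$ is pseudoholomorphic. First I would observe that $u$ factors through $\bar u = \pi_Y\circ u$, and correspondingly $T(\bR\times Z)$ along $\im u$ splits $J_{cyl}$-complex-linearly as $\bar u^*TY \oplus (\text{fiber direction})$, where the fiber line carries the complex structure coming from $\bR\times S^1\cong\bC^*$ and the connection $\lambda$. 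Thus $D_u$ splits (up to a compact/zeroth-order perturbation that does not affect the index) as a direct sum of the standard CR operator $D_{\bar u}$ on the pair $(\bar u^*TY,\partial\bar u^*TL_Y)$ over the honest disk $\bD^2$, and a CR operator $D^{\mathrm{fib}}$ on the line-bundle pair $(\bD^2\times\bC, \mu\text{-dependent boundary condition})$ over $\bH$ — the latter being where the single puncture and the weighted Sobolev space $W^{k,p,\delta}$ enter.

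The index is then additive: $\Ind(D_u) = \Ind(D_{\bar u}) + \Ind(D^{\mathrm{fib}})$. For the base part, since $\bar u$ is a disk with boundary on $L_Y$ (closed, no puncture), Riemann–Roch for bundle pairs gives $\Ind(D_{\bar u}) = n + \mu(\bar u)$. But the contribution $\mu(\bar u)$ will be cancelled: the key point of Theorem \ref{modelthm} is that the fiber boundary condition is $e^{i\frac{m\theta}{2k}\mu(\bar u)}\gamma^{1/k}$, whose ``total rotation/Maslov number'' is exactly $-\mu(\bar u)$ plus a correction $l$ determined by the asymptotic Reeb chord. Concretely, I would compute $\Ind(D^{\mathrm{fib}})$ via the Riemann–Roch/index formula for CR operators on a once-punctured disk with Lagrangian boundary and a fixed asymptotic, using the weight $0<\delta<\pi/k$ to select the correct asymptotic behavior. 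The boundary condition is a path of Lagrangian lines in $\bC$; its Maslov-type index is read off from the winding of $\theta\mapsto e^{i\frac{m\theta}{2k}\mu(\bar u)}$ (which contributes $-\mu(\bar u)$, matching the fact that the Maslov class vanishes on Bohr–Sommerfeld covers, cf. \cite[\S 4.2]{rg19}) plus the contribution $l$ of a Reeb chord of length $l\pi/k$ (each unit of Reeb length contributes $+1$ in this normalization, which is where the $l$ comes from). The weighting pushes the puncture into the ``nondegenerate'' regime so the Conley–Zehnder-type term is well-defined. Adding up, the $-\mu(\bar u)$ from the fiber cancels the $+\mu(\bar u)$ from the base, one $+n$ survives from $\bar u^*TY$, the fiber contributes $1$ (its rank) to the constant part and $l$ from the chord, giving $\Ind(D_u) = n + l$.

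The main obstacle I expect is getting the normalization of the index formula for the punctured-disk fiber operator exactly right — in particular, correctly accounting for the $1/k$-scaling (the boundary condition is $\gamma^{1/k}$, not $\gamma$, and Reeb chords have length a multiple of $\pi/k$), the effect of the weight $\delta\in(0,\pi/k)$ on the Conley–Zehnder index of the asymptotic chord, and the sign conventions relating Maslov index, Reeb chord length, and Fredholm index. The cleanest route is probably to reduce to the $k$-th tensor power as in the proof of Theorem \ref{modelthm} (where the boundary condition becomes $e^{i\frac{m\theta}{2}\mu(\bar u)}\gamma$ and chords have length multiples of $\pi$), apply the standard index computation there — this is essentially the computation in \cite{Cha23}, whose results Theorem \ref{modelthm} was designed to let us import — and then divide back by $k$, checking that taking $k$-th roots of trivializations is index-neutral. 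Alternatively, one can cite the analogous lemma from \cite{Cha23} and indicate only the modifications needed for general profile $\Lambda$; given the remark in the text that ``we can generalize many results from \cite{Cha23},'' this is likely the intended level of detail.
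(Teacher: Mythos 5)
Your proposal follows essentially the same route as the paper: the paper splits $D_u$ using the short exact sequence $0 \to T^v(\bR\times Z) \to T(\bR\times Z) \to \pi_Y^*TY \to 0$, so that $\Ind(D_u) = \Ind(D^v_u) + \Ind(D_{u_Y})$, evaluates the base term by Riemann--Roch, and obtains the vertical index $l - \mu(u_Y)$ by using Theorem \ref{modelthm} to reduce to the Clifford-type model and citing \cite{Cha23} (Remark 5.4) --- exactly your fallback suggestion. The only difference is that the paper does not carry out your direct winding-number computation for the fiber operator but imports it from \cite{Cha23}; your careful flagging of the rank-one/weight normalization (and of what $n$ denotes) is consistent with how the lemma is used later, e.g.\ in Proposition \ref{thm:goodrc}.
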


\begin{proof}
    Consider the short exact sequence 
    $$ 0 \to T^v (\bR \times Z) \to T(\bR \times Z) \xrightarrow{D\pi_Y} \pi_Y^* TY \to 0, $$ where $T^v (\bR \times Z)$ is the vertical bundle, $\ker D\pi_Y$. Let $u_Y = \pi_Y \of u.$ Notice that the vertical bundle of pullback bundle is isomorphic to the pullback of vertical bundle. Thus, we have, $$u^* T^v(\bR \times Z) \cong T^v (u_Y^* (\bR \times Z)).$$
    Using the s-e-s above, we have that $$\Ind (D_u) = \Ind (D^v_u) + \Ind (D_{u_Y}),$$ where $D^v_u$ is the restriction of $D_u$ to the vertical component and $D_{u_Y}$ is the linearization at $u_Y$. 
    
    Let $w$ be a single punctured disk in $\bR \times S^{2k-1}$ with the puncture asymptotic to a Reeb chord of length $l\frac{\pi}{k}$ such that its projection to $\bP^{k-1}$ is of Maslov index $\mu(u_Y)$. Now, using Theorem \ref{modelthm} we see that the vertical indices for $v$ and $w$ are the same. The vertical index for $w$ was computed in Remark 5.4 of \cite{Cha23}. Thus, we see that 
    \begin{align*}
        \Ind (D^v_u)  &=  \Ind (D^v_w) \\ &= l - \mu(\pi \of w)  \\&= l - \mu(u_Y) \\
        \implies \Ind (D_u) &= l - \mu(u_Y) + \Ind (D_{u_Y})\\
        &= n + l
    \end{align*}
\end{proof}

\begin{lemma}[Regularity of sections over regular maps]\label{regularityfromregularity}
    If  $\overline{u} :\bD^2 \to (Y,L_Y)$ is a regular $J_Y$ holomorphic disk and $u: \Sigma \to (\bR \times Z, \cL_\gamma)$ is a section over a punctured disk $\bD^2 \sm \{1 \} \subset \bD^2$, then $u$ is regular.
\end{lemma}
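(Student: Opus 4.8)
The plan is to compare $D_u$ with the linearisation at the underlying disc $\overline u=\pi_Y\of u$ through the vertical--horizontal short exact sequence already used in Lemma~\ref{lemm:singlepuncindex}. Pulling back
$$0 \to T^v(\bR\times Z) \to T(\bR\times Z) \xrightarrow{D\pi_Y} \pi_Y^* TY \to 0$$
by $u$ — legitimate because $\pi_Y$ is $(J_c,J_Y)$-holomorphic and the connection form $\lambda$ is complex linear, so the splitting is compatible with the Cauchy--Riemann operators — produces a short exact sequence of CR operators on $\Sigma$ with totally real boundary conditions coming from $T\cL_\gamma$ and $TL_Y$:
$$0 \to D^v_u \to D_u \xrightarrow{D\pi_Y} D_{\overline u} \to 0,$$
where $D^v_u$ acts on $u^*T^v(\bR\times Z)\cong T^v(\overline u^*(\bR\times Z))$ and $D_{\overline u}$ is the linearisation at $\overline u$, regarded on the unpunctured disc $\bD^2$. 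I would then read off the long exact sequence
$$0 \to \ker D^v_u \to \ker D_u \to \ker D_{\overline u} \xrightarrow{\ \partial\ } \coker D^v_u \to \coker D_u \to \coker D_{\overline u} \to 0,$$
which reduces the claim $\coker D_u=0$ to two statements: $\coker D_{\overline u}=0$, and surjectivity of the connecting map $\partial\colon \ker D_{\overline u}\to\coker D^v_u$.

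The first is exactly the hypothesis that $\overline u$ is a regular $J_Y$-disc. For the second I would pass to the explicit model of Theorem~\ref{modelthm}: under that isomorphism $D^v_u$ is the Cauchy--Riemann operator on a holomorphic line bundle over the punctured disc, with totally real boundary subbundle the tangent-line field of $e^{i\frac{m\theta}{2k}\mu(\overline u)}\gamma^{1/k}$ and with the weighted asymptotics ($W^{k,p,\delta}$, $0<\delta<\pi/k$) prescribed by the Reeb chord at the puncture. This is a rank-one problem over a disc, so the cokernel is controlled by automatic transversality for line bundles over the disc: when the boundary Maslov index of the model is $\ge -1$ one gets $\coker D^v_u=0$ directly, and in the remaining low-index case $\coker D^v_u$ is one-dimensional and is killed by $\partial$, since $\ker D_{\overline u}$ is large (its dimension equals $\Ind D_{\overline u}>0$ because $\overline u$ is regular and $L_Y$ is monotone) and $\partial$ is nonzero — an explicit deformation of $\overline u$ that does not lift to a holomorphic section with the prescribed asymptotics produces a nonzero obstruction class. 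Concretely this is the argument of \cite[Remark~5.4 and the surrounding discussion]{Cha23}, now applied with $\overline u$ allowed to be any regular disc rather than only a Maslov-$2$ one; the relevant index bookkeeping is already recorded in Lemma~\ref{lemm:singlepuncindex}.

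The main obstacle is precisely the surjectivity of $\partial$ in the case $\coker D^v_u\ne 0$. Two points need care: first, that the weighted Sobolev set-up is used uniformly on all three terms of the short exact sequence, so that the long exact sequence is genuinely exact with matching weights; and second, the explicit computation — via the model of Theorem~\ref{modelthm} — exhibiting a deformation of $\overline u$ whose lift to $\overline u^*(\bR\times Z)$ is obstructed, so that $\partial$ hits the one-dimensional space $\coker D^v_u$. Once that computation is in hand the conclusion $\coker D_u=0$, i.e.\ the regularity of $u$, is immediate from the long exact sequence.
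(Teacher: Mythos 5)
Your skeleton matches the paper's (split $D_u$ into vertical and horizontal pieces via the connection, use regularity of $\overline u$ for the horizontal factor, and invoke rank-one automatic transversality for the vertical factor), but the actual content of the paper's argument is missing from your proposal. The paper first compactifies the weighted punctured problem at the puncture using \cite[Prop.~6.16]{PW19} (so kernel and cokernel of the compactified and cylindrical problems agree), then uses the model bundle pair of Theorem~\ref{modelthm} together with the classification of sections in \cite[\S 6.3]{Cha23} to compute that the compactified vertical operator $(D^v_u)^{comp}$ always has Maslov index at least $-1$; Oh's theorem \cite{riemHilb} then gives $\coker D^v_u = 0$ unconditionally, so there simply is no residual ``low-index case.'' You never establish this index bound — which is the whole point of passing through Theorem~\ref{modelthm} — and instead allow a case with $\coker D^v_u \neq 0$ that you propose to kill via surjectivity of the connecting map $\partial$, a step you yourself flag as not carried out. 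That is a genuine gap, not a deferrable technicality.

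Moreover, the proposed fallback cannot be repaired as stated. Because the connection on $\bR \times Z$ is complex linear and $\pi_Y$ is $(J_c,J_Y)$-holomorphic, the linearization splits as a direct sum $D_u \cong D^v_u \oplus D_{\overline u}$ (this is exactly how the paper uses the short exact sequence), so the connecting homomorphism in your long exact sequence vanishes; a nonzero $\coker D^v_u$ would then directly obstruct regularity of $u$, and no deformation of $\overline u$ in the base can compensate for it. Even granting only the extension (upper-triangular) structure, your claim that $\coker D^v_u$ is one-dimensional in the bad case is unjustified: for a rank-one problem over the disc with compactified Maslov index $\mu \le -1$ the cokernel has dimension $-\mu-1$, which exceeds one as soon as $\mu \le -3$, so ``$\partial$ is nonzero'' would not yield surjectivity. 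The correct route is the paper's: compactify at the puncture, identify the vertical boundary-value problem with the explicit model of Theorem~\ref{modelthm}, verify the Maslov index bound $\ge -1$ from the classification, and conclude surjectivity of $D^v_u$ from \cite{riemHilb}, with $D_{\overline u}$ surjective by hypothesis.
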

\begin{proof}
    Note that $u^*T(\bR \times Z)$ has a splitting $$u^*T(\bR \times Z) \cong \overline{u}^* ((TY) \oplus  \overline{\bC}) $$ in a small punctured neighborhood around $\{1\}$ in  $\bD^2\sm \{1\}$ where the trivial bundle $\overline{\bC}$ is obtained by a trivialization of the fiber bundle $Z$ over $Y$ near each puncture.
    Proposition 6.16 of \cite{PW19} shows that this Cauchy-Riemann(CR) problem can be compactified by adding trivial bundles near each puncture such that the kernel and cokernel of the compactified CR problem is isomorphic to the cylindrical CR problem. 
    The short exact sequence 
    $$0 \to T^v{(\bR \times Z)} \to T(\bR \times Z) \to \pi_Y^* TY \to 0$$ induces  $D_u$ to splits as $D_u^v \oplus D_{\overline{u}}$ and from assumption $D_{\overline{u}}$ is surjective. 
    
    To show the vertical component is surjective, we compute the Maslov index of the compactified vertical bundle, and then use \cite{riemHilb} to conclude surjectivity. From Theorem \ref{modelthm} and classification result of disks in \cite[\S 6.3]{Cha23} we can find a map $w:\bD^2\sm \{ 1\} \to \bC^k\sm \{ 0 \}$ such that $(\pi_{\bP^{k-1}} \of w) ^* \underbrace{\cO(-1)\sm{\underline{0}_{section}}}_{\cong \bC^k \sm \{ 0\}} \cong \overline{u}^* ({\bR \times Z}) $ as line bundles along with the Lagrangian boundary condition and $w \cong u$ as sections under the isomorphism. Note that we can choose $w$ such that the Maslov index of the projection $\pi_{\bP^{k-1}} \of w$ is at most $2\lceil\frac{l}{2}\rceil$. Thus, from the index computation in Lemma \ref{lemm:singlepuncindex} we see that the index of the vertical part $(D^v_w)^{comp}$ is at least $n+l - (n-1 + 2\lceil\frac{l}{2}\rceil  )$ , thus the Maslov index is at least $-1.$ Thus, we know that compactified vertical bundle $(D^v_u)^{comp}$ has Maslov index at least -1, and thus surjective from \cite{riemHilb}.

    \noindent 
    Finally, by using Proposition 6.16 \cite{PW19}, we have that the uncompactified $D^v_u$ is surjective.
    \end{proof}

\begin{lemma}\label{anypathhassinglepunct}
When $L_Y$ has minimal Maslov 2, then $\Lambda$ is Reeb fillable. See Definition \ref{def:reebfill}.
\end{lemma}

\begin{proof}

Without loss of generality, let us assume $c$ starts from $\Lambda$, i.e. $c(0) \in \Lambda,$ and $\pi_Y \of c = p$ where $\pi_Y : Z \to Y$ is the projection map. Let the length of $c$ be equal to $\frac{l\pi}{k}$, i.e. the length is $l$ times the length of the smallest possible Reeb chord length. 

    Let $$\overline{u}: (\bD^2,S^1) \to (\bP^{k-1}, T^{k-1}_{Cl})$$ be a $J_0$ holomorphic disk with boundary on the Clifford torus and of Maslov number $2\lfloor \frac{l}{2} \rfloor$.

    Let $\overline{u}_Y$ be a  Maslov $2\lfloor \frac{l}{2} \rfloor$ smooth disk with boundary on $L_Y$ passing through the point $p$, say $\overline{u}_Y (1)=p$. Such a disk exists from the simply connectedness of $Y$. There is a isomorphism as smooth $\bC^*$ bundles of the  bundle pair over $\bD^2$, $$(\overline{u}_Y^* (\bR \times Z), \partial \overline{u}_Y^* \cL_\gamma) \xlongleftrightarrow{\cong} (\overline{u}^* (\bR \times S^{2k-1}), \partial \overline{u}^* T_\gamma).$$

\noindent    From Proposition 6.18, Remark 6.19 and 6.20 and in \cite{Cha23} we see that there is a lift of the map $\overline{u}$ on  $\bD^2 \sm \{1 \} \cong \bH$, i.e. there is a map $$u: \bH \to (\bR \times S^{2k-1}, T_\gamma)$$ such that $\pi_{\bP^{k-1}} \of u = \overline{u}. $ Thus by an application of Theorem \ref{modelthm} we see that there is a single punctured disk $u_c$. 
\noindent  The only change needed when $c$ starts at $e^{i\frac{\pi}{k}}\Lambda$ is to consider a Maslov $2\lceil \frac{l}{2} \rceil$ disk in $\bP^{k-1}.$

\end{proof}

\noindent We now show that using Lemma \ref{anypathhassinglepunct}, we have an area-lowering result for SFT buildings where we keep the complex structure in the neck region $[a,b]\times Z$ to be equal to the cylindrical metric $J_{cyl}$ induced from $J_Y$. We collect some area computations in the inside piece below.

\begin{defn}[Energy of a building with no neck levels]
    Let $\mathfrak{u}$ be a building with no neck levels. We define the energy $E(\mathfrak{u})$ of such a building to be the sum of areas of the inside and outside components measured with the symplectic form obtained from symplectic cutting at hypersurface $Z$ along which we perform the neck stretching. We define the symplectic form $\omega_\inn$ and $\omega_{out}$ as the symplectic forms obtained from such a symplectic cut process.
\end{defn}

It is clear from the SFT compactness theorem that if $u_n$ is a sequence of holomorphic disks which converge to a building $\mathfrak{u}$ without any neck levels , then we have $$\lim_n E(u_n) = E(\mathfrak{u}).$$ 
\noindent

\begin{defn}[Gapped Reeb action]
    For a Reeb chord $c$ with end-points on $\Lambda_2 \subset \{s\} \times Z$, we define the gapped Reeb action $A_g(\gamma)$ as $$A_g(c) = f_\gamma(c(1)) - f_\gamma(c(0))+ \int_c e^s\lambda. $$ Here we use $f_\gamma$ to denote the primitive of $e^s\lambda$ as in Proposition \ref{prop:exact_in_neck}.
\end{defn}

\noindent  It is clear that the gapped Reeb action $A_g(c)$ measures the symplectic area of a disk with a single puncture with asymptote $c$. Moreover, existence of holomorphic punctured disks with minimal Reeb chord asymptotes implies that the gapped Reeb actions are positive, ie $A_g > 0.$

\begin{lemma}\label{lemm:area_comp_inside}
    Let $u_\inn$ be a smooth punctured disk with boundary punctures at $\{p_i \}_{i\in I}$ with corresponding Reeb chord data $\{ \gamma_i\}_{i\in I}$. Then we have the following area relation,
    \begin{equation}
        \int_{u_{in}}\omega_\inn  = \sum_{i\in I}A_g(\gamma_i)
    \end{equation}
\end{lemma}

\begin{proof}
    This is just an application of Stokes' theorem.
\end{proof}

\noindent As a corollary to the above lemma, we see that the symplectic area of a punctured disc increases as the number of  punctures going to Reeb asymptotes increases. We will need a special case of this result - 

\begin{lemma}\label{lemm:single_area_small}
Let $u_\inn$ be a punctured disc with boundary punctures asymptotic to Reeb chords $\{c_i\}_{i\in I}$. Fix an element $\wt i \in I$. Let $v_\inn$ be a \textit{smooth}  single punctured disk that asymptotes to $c_{\wt i}$. Then $$0<E(v_\inn) < E(u_\inn).$$

\end{lemma}
\begin{proof}
    The proof follows from positivity of $A_g$ and Lemma \ref{lemm:area_comp_inside}.
\end{proof}

\begin{lemma}\label{lemm:indexlowering}
For any broken disk $\fu$ with an interior disk $u_\inn$ with more than a single puncture, there is a smooth broken disk $\fu'$ of lower positive symplectic area, i.e. 

$$0<E(\fu') < E(\fu).$$
\end{lemma}

\begin{proof}
     Assume that the inner piece contains a disk $u_\inn$ with more punctures than just one boundary puncture. Denote the outside disk component that contains the marked point mapping to $p$ as $u_o$. Let $\wt c$ be a Reeb chord of $u_o$ which connects to a multiply punctured inner disk $u_i$. Let $\fu'$ be the smooth broken disk obtained by swapping $u_\inn$ with a single punctured disc $v_\inn$ asymptotic to $\wt c$. From Lemma \ref{lemm:single_area_small} we have that $0< E(\fu') < E(\fu').$

\end{proof}

\noindent In particular, if we choose a perturbation scheme $\cP_0$ which is fixed in the interior and makes lifts of regular disks  as done in Lemma \ref{lemm:fixedininterior}, we have that any broken disk using the perturbation scheme $\cP_0$ will have virtual dimension higher than 0 if it contains interior disk pieces with more than one puncture.

\begin{proof}[Proof of Theorem \ref{classifthm}]
First, note that a stable, rigid, broken disk cannot have a neck piece. It follows from the stability of the broken disk that neck pieces have a non-trivial strip or cylinder, and thus the $\bR-$translations of the neck piece stops such a broken disk from being regular.

    We can choose a regularizing perturbation scheme $\cP$, close to $\cP_0$ as done in Theorem \ref{thm:regularity} and Lemma \ref{lemm:fixedininterior} where an interior disk of any rigid broken disk has one boundary puncture. This follows from a Gromov compactness argument : If we couldn't find such a perturbation, then we would get a sequence of rigid broken disk whose limit will be a broken disk in the $\cP_0$ perturbation scheme. We can then perform a swapping argument as done in Lemma \ref{lemm:indexlowering} to find a smooth broken disk with lower symplectic area, which contradicts the monotonicity of $L$. Indeed, the broken disk $\fu$ and $\fu'$ can be smoothly glued to obtain disks $u,u'$ in $\pi_2(M,L)$. From the monotonicity assumption on $L$, since Maslov number of $u$ is 2, we know that $u$ has the smallest possible positive symplectic area. But sine $E(\fu') < E(\fu)$, we would have $u'$ to have a smaller symplectic area than $u$, hence we arrive at a contradiction. Thus, any disk that appears in the inside piece has a single boundary puncture. 
    
    Now index computation from Lemma \ref{lemm:singlepuncindex} shows that if the Reeb chord at the puncture is not minimal, the evaluation map at the puncture will have a positive dimensional kernel which would contradict the rigidity of the broken disk. The index computation can be done by a Gromov compactness argument again since Lemma \ref{lemm:singlepuncindex} was for holomorphic disks\footnote{We believe that there should be a purely topological argument for the index, but did not pursue such a proof}. We can find a perturbation scheme $\cP$ close to $\cP_0$ such that the index of a single puncture holomorphic disk is obtained from the formula in Lemma \ref{lemm:singlepuncindex}. If not, then there should be a sequence of holomorphic disks with a single puncture going to a Reeb chord which has action $l$ times the minimal action but has an index different from $n+l$. The sequence has an upper bound on symplectic area as $\cL_\gamma$ is exact and the area can be written in terms of the asymptotic Reeb chord and the relative homology class of the boundary map. Thus, we have a Gromov convergent subsequence whose limit is a map withindex $n+l$ which contradicts that the sequence consists of maps which do not have index $n+l$.  Thus, the inside piece can have only single punctured disks where the puncture is asymptotic to a positive or negative minimal Reeb chord.

    The second part of the Theorem, which relates the projection of the inside disks with the asymptotic Reeb chord at the boundary puncture, follows from shifting the problem to the specific case by using the model bundle pair result of Theorem \ref{modelthm}. The classification result in Theorem 6.5 of \cite{Cha23} proves this result in the specific case of $Z = S^{2n+1}, Y=\bC P^n$, $L_Y = T_{Cliff}$, and using Theorem \ref{modelthm} the result follows for any $Z,Y,L_Y$. 
\end{proof}
\section{Applications}\label{app}

In this section, we explore applications of Bohr-Sommerfeld-Profile surgery. The main set of examples where we can apply BSP surgery is Biran's circle bundle lifts. Finally, we show that we can use BSP-surgeries to construct exotic monotone Lagrangian tori in $\bP^n$.

\subsection{Exotic Lifts and Vianna tori} \label{sub:exonvia}

We can  use Biran's circle bundle to provide a large set of examples where one can perform a BSP surgery. We recall the construction of Lagrangian circle bundles from  \cite{b06,bc09}. Let $\Sigma \hookrightarrow M$ be an inclusion of compact monotone symplectic manifolds where $(M,\omega)$ is a Kahler manifold and $\omega$ is an integral symplectic form. Assume that $\Sigma$ satisfies $PD[\Sigma] = \omega$ (if the Poincare dual was $k\omega$ we  rescale $M,\omega$ to $(M,k\omega)$ ), note that this is a specialization of Biran's construction. Thus, the circle bundle of the normal bundle of $\Sigma$ can be taken to be a pre-quantum bundle $Z$ over $\Sigma.$ From \cite{b06} we have an embedding of a disk bundle $E_\Sigma$ modelled on the normal bundle of $\Sigma$ onto $M\sm \Delta$ where $\Delta$ is an isotropic CW complex. Proposition 6.4.1 \cite{bc09} proves that if $dim M \geq 6,$ any monotone Lagrangian $L_\Sigma \hookrightarrow \Sigma$ has a monotone lift $L_M$ in $M$.

\begin{prop}\label{prop:biranbundlehandle}
    Let $L_M$ be a Biran circle bundle lift from $L_\Sigma \hookrightarrow  \Sigma$. Assume that $L_\Sigma$ satisfies Assumption \ref{as1}. After an exact isotopy, we can assume that $L_M$ has a BSP Lagrangian handle modelled over $L_\Sigma.$ 
\end{prop}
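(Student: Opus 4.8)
The plan is to present the Biran lift, after a fibrewise exact isotopy supported in the disk-bundle neighbourhood, as the parallel transport along $L_\Sigma$ of a closed curve in the model annulus fibre which contains $\gamma^{1/k}$ for a suitable admissible curve $\gamma$; Definition \ref{bsphandle} then applies verbatim, with $Y=\Sigma$, $Z$ the unit normal circle bundle, and $\Lambda$ the canonical Bohr--Sommerfeld lift of $L_\Sigma$ (which is defined precisely because $L_\Sigma$ satisfies Assumption \ref{as1}).

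First I would recall Biran's normal form. The construction of \cite{b06} gives a symplectic embedding of the unit disk bundle $E_\Sigma$ of $N_\Sigma$ onto $M\sm\Delta$, and on $E_\Sigma\sm\Sigma\cong (0,1)_\rho\times Z$ the symplectic form is $\omega_E=\pi^*\omega_\Sigma+d(\tfrac12\rho^2\lambda)$, where $\lambda$ is the prequantum connection form ($d\lambda=\pi^*\omega_\Sigma$, using $PD[\Sigma]=\omega$). A change of radial coordinate $e^s=\tfrac12\rho^2+1$ turns this into the symplectization form $d(e^s\lambda)$; hence any shell $\{\rho\in[\rho_1,\rho_2]\}$ is symplectomorphic to $([\tfrac{\log a}{k},\tfrac{\log b}{k}]\times Z,\,d(e^s\lambda))$ for appropriate $a,b$, which is exactly the ambient space in which BSP handles (Definition \ref{def:Bsphand}) live. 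Under this identification the Biran lift $L_M$ of $L_\Sigma$ meets the shell in the level set $\{s=s_0\}\times Z|_{L_\Sigma}$ for the monotonicity-determined value $s_0$; since $d\lambda|_{L_\Sigma}=\pi^*\omega_\Sigma|_{L_\Sigma}=0$, the bundle $Z|_{L_\Sigma}$ is flat and this level set equals the parallel transport along $L_\Sigma$ of the full circle $\{|z|=e^{s_0}\}$ in the fibre annulus.

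Next I would produce the curve and the isotopy. Choose an admissible curve $\gamma$ in $A(a,b)$ whose $k$-th roots $\gamma^{1/k}$ consist of $k$ arcs running close to radius $e^{s_0}$ over most of their length and dipping radially only near the endpoints (to meet the ``real near the ends'' condition); close $\gamma^{1/k}$ up to an embedded winding-number-one loop $\delta$ in $A(a^{1/k},b^{1/k})$ by adjoining $k$ connecting arcs, and --- using the freedom in the radial profile of $\gamma$ and of the connecting arcs --- arrange that $\int_\delta\lambda_k=\int_{\{|z|=e^{s_0}\}}\lambda_k$, where $\lambda_k=e^{s/k}d\theta$. Then $L_M':=$ (parallel transport of $\delta$ along $L_\Sigma$) is an embedded Lagrangian, by the same computation as in the lemma ``$\cL_\gamma$ is a Lagrangian'', and it contains $\phi(\cL_\gamma)$ as a codimension-zero piece bounded inside $L_M'$ by the separating hypersurface $\phi(\Lambda_2)$. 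Joining $\{|z|=e^{s_0}\}$ to $\delta$ through embedded winding-number-one loops $\delta_\tau$ with $\int_{\delta_\tau}\lambda_k$ constant and parallel transporting gives a Lagrangian isotopy from $L_M\cap(\text{shell})$ to $L_M'$, compactly supported in the interior of the shell; by the closed-curve version of the Proposition relating constancy of $\int_{\gamma_\tau}\lambda_k$ to relative Hamiltonian isotopies it is exact on the fibre class, and the flux on the horizontal classes vanishes because the deformation is purely fibrewise and $\pi^*\omega_\Sigma$ restricts to zero on cycles in $L_\Sigma$. By the relative-Hamiltonian-isotopy extension lemma this extends to an exact isotopy of $L_M$ in $M$ fixing $L_M$ off the shell, and exactness preserves monotonicity, giving the last sentence.

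The main obstacle I expect is the bookkeeping in the previous paragraph: one must choose $\gamma$ and the path $\delta_\tau$ simultaneously so that $\delta$ genuinely contains an admissible $\gamma^{1/k}$, remains embedded, stays inside $A(a^{1/k},b^{1/k})$, and has the prescribed weighted length $\int_\delta\lambda_k$ at every time --- this is exactly the situation governed by the ``$\int_{\gamma_\tau}\lambda_k$ constant'' Proposition, so the real content is in reducing cleanly to that statement rather than in any new estimate. A secondary point to verify is that Biran's normal form on the relevant shell is \emph{equal} (after the coordinate change above), not merely diffeomorphic, to the symplectization model $d(e^s\lambda)$, since the whole definition of a BSP handle is phrased in that model; one should also confirm that the level $s_0$ at which \cite{bc09} places the monotone lift lies in the interior of the chosen shell.
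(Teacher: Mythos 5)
Your route is essentially the paper's: present the Biran lift fibrewise as the parallel transport over $L_\Sigma$ of a circle in the annulus fibre (using that $Z|_{L_\Sigma}$ is flat), then deform that circle fibrewise, keeping the weighted length fixed, to an embedded loop containing $\gamma^{1/k}$, and extend the resulting exact isotopy of the handle region to all of $L_M$. The paper phrases the fibrewise move as an exact isotopy of the fixed-radius circle through $\bZ_k$-equivariant embedded circles; you phrase it in symplectization coordinates via the constancy of $\int_{\delta_\tau}\lambda_k$, which is the same mechanism.

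The one point you omit --- and it is the pivot of the paper's argument --- is invariance under the holonomy of the flat connection on $Z|_{L_\Sigma}$. Assumption \ref{as1} (with $k$ minimal) makes the holonomy group the group of $k$-th roots of unity acting on each fibre, and parallel transport along $L_\Sigma$ in the sense of Definition \ref{def:pll} yields a closed embedded Lagrangian only from fibre subsets fixed by this $\bZ_k$-action. The round circle and $\gamma^{1/k}$ are $\bZ_k$-invariant, but your closed loop $\delta$ (obtained by adjoining $k$ connecting arcs) and the interpolating loops $\delta_\tau$ need not be unless you choose them equivariantly; otherwise ``parallel transport of $\delta_\tau$ along $L_\Sigma$'' is not well defined as a closed submanifold, so $L_M'$ and the isotopy do not exist as stated. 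The fix is immediate --- take the $k$ connecting arcs to be rotates of one another by $k$-th roots of unity and run the constant-$\int\lambda_k$ interpolation through $\bZ_k$-invariant loops --- but the condition must be stated, since it is exactly where Assumption \ref{as1} enters beyond defining $\Lambda$. The sign point you flag (Biran's disk bundle is a capping of $Z$, i.e. the connection form satisfies $d\alpha=-\pi^*\omega_\Sigma$, so increasing radius corresponds to decreasing symplectization coordinate) is real but harmless for the existence of the handle, as the paper notes in the remark following the proposition.
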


\begin{figure}
    \makebox[\textwidth][c]{
   
        \def\svgscale{0.8
}
\begingroup%
  \makeatletter%
  \providecommand\color[2][]{%
    \errmessage{(Inkscape) Color is used for the text in Inkscape, but the package 'color.sty' is not loaded}%
    \renewcommand\color[2][]{}%
  }%
  \providecommand\transparent[1]{%
    \errmessage{(Inkscape) Transparency is used (non-zero) for the text in Inkscape, but the package 'transparent.sty' is not loaded}%
    \renewcommand\transparent[1]{}%
  }%
  \providecommand\rotatebox[2]{#2}%
  \newcommand*\fsize{\dimexpr\f@size pt\relax}%
  \newcommand*\lineheight[1]{\fontsize{\fsize}{#1\fsize}\selectfont}%
  \ifx\svgwidth\undefined%
    \setlength{\unitlength}{450bp}%
    \ifx\svgscale\undefined%
      \relax%
    \else%
      \setlength{\unitlength}{\unitlength * \real{\svgscale}}%
    \fi%
  \else%
    \setlength{\unitlength}{\svgwidth}%
  \fi%
  \global\let\svgwidth\undefined%
  \global\let\svgscale\undefined%
  \makeatother%
  \begin{picture}(1,0.5)%
    \lineheight{1}%
    \setlength\tabcolsep{0pt}%
    \put(0,0){\includegraphics[width=\unitlength,page=1]{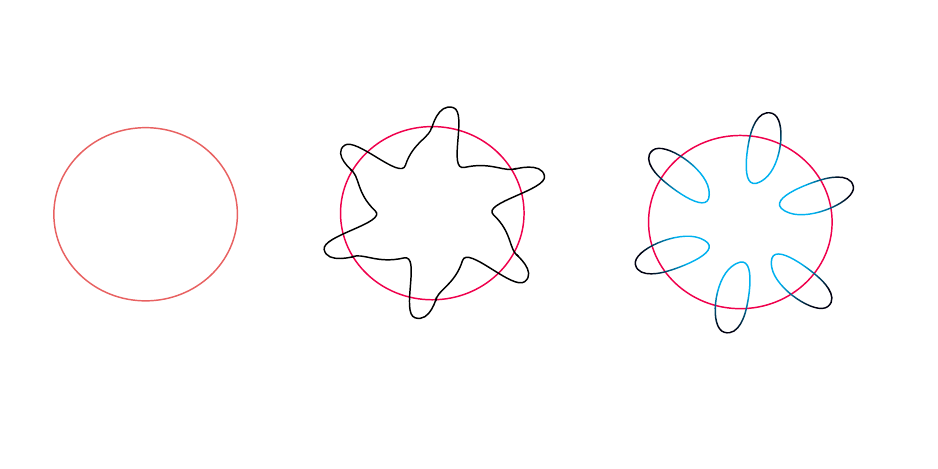}}%
    \put(0.11567757,0.12340249){\color[rgb]{0,0,0}\transparent{0.00784314}\makebox(0,0)[lt]{\lineheight{1.25}\smash{\begin{tabular}[t]{l}$L_M$\end{tabular}}}}%
    \put(0.41962883,0.12069975){\color[rgb]{0,0,0}\transparent{0.00784314}\makebox(0,0)[lt]{\lineheight{1.25}\smash{\begin{tabular}[t]{l}$L'$\end{tabular}}}}%
    \put(0.76210901,0.11326739){\color[rgb]{0,0,0}\transparent{0.00784314}\makebox(0,0)[lt]{\lineheight{1.25}\smash{\begin{tabular}[t]{l}$BSP_0(L')$\end{tabular}}}}%
  \end{picture}%
\endgroup%

 }
    \caption{A fiber slice of $L_M, L'$ and $BSP(L')$ respectively, i.e. intersection of them with $E_\Sigma|_p$ for $p\in L_\Sigma$.}
    \label{fig:fiberslice}
\end{figure}
\begin{remark}
    The fiber slice of $BSP(L')$ looks very similar to the monotone twist torus as defined in \cite{Chekanov_Schlenk_notes} which is actually $BSP(T_{Cliff})$. This similarity might suggest that a surgery with Bohr-Sommerfeld-profile is a fibered version of monotone twist construct as done in op. cit. - but this is merely conjectural and not pursued in this article.
\end{remark}

\begin{proof}
    Note that the restriction of the disk bundle $E_\Sigma $ to $L_\Sigma$ is a topologically trivial bundle with a flat connection induced from the connection on $\Z$. Recall that the holonomy around any loop on $L_\Sigma$ is multiplication by  $e^{\frac{i2l\pi}{k}}$ where $l<k$ and $k$ is the smallest integer satisfying Assumption \ref{as1}. Thus, we have that the holonomy can be realized by a $\bZ_k$ action where we view the $k$-th roots of unity as $\bZ_k$. If we choose any $\bZ_k$-invariant embedding of $S^1$ in a fiber $E_\Sigma|_p$ over any point $p$ on $L_\Sigma$, we can parallel transport it along $L_\Sigma$ to get a Lagrangian in $E_\Sigma.$ The Biran circle lift is obtained by taking parallel transport of a fixed radius circle centered at the origin over a fiber which is clearly invariant under the $\bZ_k$ action. We can perform an exact isotopy of the fixed radius circle through $\bZ_k$ equivariant embeddings, which induces an exact isotopy of $L_M$, see  figure \ref{fig:fiberslice}. By isotoping $L_M$ to $L'$ such that $L_M$ and $L'$ intersect transversely in each fiber of $L_\Sigma$, we see that $L'$ has a BSP handle modelled on $L_\Sigma.$

\end{proof}
   
    \begin{remark}
        
w    One should note that the disk bundle as constructed in \cite{bc09,b06} is a symplectic capping of the pre-quantum bundle $Z$.\footnote{We thank Dylan Cant for pointing out this sign issue.} The symplectic form on the bundle $E_\Sigma$ is $\omega_{can} = \pi^* \omega_\Sigma + d(r^2 \alpha)$ where $\alpha$ is a connection form such that $d\alpha= -\pi^* \omega_\Sigma$. By a change of co-ordinates, $r \mapsto \frac{1}{r}$, one sees that it is always possible to perform a Zero-area BSP surgery in this $E_\Sigma$.
    \end{remark}

\begin{defn}
    An \textit{exotic lift} of a Lagrangian $L_\Sigma \hookrightarrow \Sigma$ is defined as the BSP surgery, $BSP(L_M)$ , of the Biran lift $L_M$.
\end{defn}

Recall the notion of combinatorial mutation as introduced in \cite{minksum}. See \S \ref{subsec:geomrel} for the necessary definitions.

\begin{prop}\label{prop:newtstruct}
    $\newt(W_{BSP(\overline{T}_\abc)}) = \text{mut}_w({W_{\overline{T}_\abc}}$) for a width vector $w$ such that $h_{max}=n,h_{min}=-1.$ Moreover, $\newt(W_{\overline{T}_\abc} \cap \{ w = h_{max} \}$ is a point.
\end{prop}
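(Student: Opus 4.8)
The plan is to read both claims off the wall-crossing formula of Theorem~\ref{mainthm}, using the translation between monomial substitutions in the disk potential and combinatorial mutations of Newton polytopes recorded in \S\ref{subsec:geomrel}. First I would fix notation: by \S\ref{sub:exoticlift} and Proposition~\ref{prop:biranbundlehandle} the lifted Vianna torus $\overline{T}_\abc\hookrightarrow\bP^n$ carries a BSP handle with profile $\Lambda_\abc$, and it satisfies Assumptions~\ref{ass:diskpot} (the ERD hypothesis A.6 holding by Example~\ref{viannsed}, Lemma~\ref{setoerd} and, inductively, Proposition~\ref{selift}). Write $H_1(\overline{T}_\abc)=i_*H_1(\Lambda_\abc\times\{0\})\oplus\bZ\langle\zeta\rangle$ as in A.2, let $x=(x_1,\dots,x_{n-1})$ be coordinates on $\Hom(i_*H_1(\Lambda_\abc\times\{0\}),\bC^*)$, let $z$ be dual to $\zeta$, and set $w=\hat z$. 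Since the augmentation polynomial $W_{\Lambda_\abc}$ is a Laurent polynomial in $x$ alone, $F:=\newt(W_{\Lambda_\abc})$ lies in $w^\perp$ and is the candidate mutation factor; the proposition's shorthand $\text{mut}_w(W_{\overline{T}_\abc})$ means $\text{mut}_w(\newt(W_{\overline{T}_\abc}),F)$.

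Next I would prove $\newt(W_{BSP(\overline{T}_\abc)})=\text{mut}_w(\newt(W_{\overline{T}_\abc}),F)$. Expand $W_{\overline{T}_\abc}=\sum_h z^hC_h(x)$; Theorem~\ref{mainthm} gives $W_{BSP(\overline{T}_\abc)}=\sum_h z^hW_{\Lambda_\abc}(x)^hC_h(x)$. For $h\ge 0$ the block of monomials at height $h$ is $hF+\newt(C_h)$, since the Newton polytope of a product is the Minkowski sum of the Newton polytopes of the factors. For $h<0$, since the disk potential $W_{BSP(\overline{T}_\abc)}$ is a genuine Laurent polynomial it carries no negative power of $W_{\Lambda_\abc}$, so $W_{\Lambda_\abc}^{|h|}$ divides $C_h$, say $C_h=W_{\Lambda_\abc}^{|h|}D_h$; then the block at height $h$ is $\newt(D_h)=\newt(C_h)-|h|F$. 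Taking $G_h:=\newt(D_h)$ — an admissible auxiliary polytope, as $\{v\in\text{vert}(\newt(W_{\overline{T}_\abc})):w(v)=h\}\subseteq P_h=G_h+|h|F$ — and comparing with the definition of $\text{mut}_w$ gives the identity; this is exactly the relation asserted in \S\ref{subsec:geomrel}.

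It then remains to identify $h_{min}$, $h_{max}$ and the top slice, for which I would invoke the explicit shape of $\newt(W_{\overline{T}_\abc})$. For the lift into $\bP^3$ this is the computation of \cite{chw}; for the higher iterated lifts one propagates the structure through each Biran lift using the circle-bundle disk-potential formula of \cite{bk13} (cf.~\cite{DTVW}). The outcome is that, in the basis of Convention~\ref{conv:homo}, $W_{\overline{T}_\abc}$ has the two-block form $W_{\overline{T}_\abc}(x,z)=z^{-1}W_{\Lambda_\abc}(x)+z^{n}m(x)$ with $m$ a monomial — equivalently $\newt(W_{\overline{T}_\abc})$ is the cone over $F\cong\newt(W_{T_\abc})$. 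Hence $C_{-1}=W_{\Lambda_\abc}$ and $C_n=m$, so $h_{min}=-1$, $h_{max}=n$, and $\newt(W_{\overline{T}_\abc})\cap\{w=h_{max}\}=\newt(m)$ is a single point; in particular $C_{-1}/W_{\Lambda_\abc}=1$ is a monomial, consistent with the previous step. Substituting into the mutation identity (the height-$n$ block becomes $nF+\newt(m)$ and the height-$(-1)$ block becomes a point) finishes the argument.

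The step I expect to be the main obstacle is this last one: controlling $\newt(W_{\overline{T}_\abc})$, and in particular verifying that its $z$-width is exactly $n+1$ with a single lattice point at height $h_{max}$. For $\bP^3$ this is immediate from \cite{chw}, but for the higher iterated lifts one must check that the two-block form survives each Biran lift, which rests on the disk-potential formula for circle-bundle lifts of \cite{bk13,DTVW}. Everything else — the Minkowski-sum bookkeeping, the divisibility of the $C_h$ forced by $W_{BSP(\overline{T}_\abc)}$ being a Laurent polynomial, and the choice of the $G_h$ — is routine once that structure is available.
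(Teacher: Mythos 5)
Your argument is correct in outline and its first half coincides with the paper's: both reduce the identity $\newt(W_{BSP(\overline{T}_\abc)})=\text{mut}_w(\newt(W_{\overline{T}_\abc}),\newt(W_{\Lambda_\abc}))$ to Theorem \ref{mainthm} via the dictionary of \S\ref{subsec:geomrel}, and your explicit bookkeeping (Minkowski sums for $h\ge 0$, Laurent-ness forcing $W_{\Lambda_\abc}^{|h|}\mid C_h$ for $h<0$, taking $G_h=\newt(C_h/W_{\Lambda_\abc}^{|h|})$) is just a spelled-out version of what the paper asserts there. Where you genuinely diverge is the second half. You pin down $h_{\min}=-1$, $h_{\max}=n$ and the one-point top slice by claiming the full two-block (cone) form $W_{\overline{T}_\abc}=z^{-1}W_{\Lambda_\abc}(x)+z^{n}m(x)$, immediate from \cite{chw} in $\bP^3$ but, for the iterated lifts, resting on the circle-bundle potential formula of \cite{bk13,DTVW} -- exactly the input the paper only invokes in a remark as a ``proof sketch'' (\cite{DTVW} being upcoming work). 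The paper instead argues by induction on the Markov tree: the base case $(1,1,1)$ is the Clifford torus, where the structure is known from higher mutation, and the induction step uses that if $\abc$ and $\abcprime$ are Markov mutations then $W_{\overline{T}_\abc}$ and $W_{\overline{T}_\abcprime}$ are related by a combinatorial mutation of the triangular face, which preserves the existence of a width vector with $h_{\max}=n$, $h_{\min}=-1$ and a point at the top (cf.\ Proposition 4.6 of \cite{chw}). So your route buys a stronger structural statement (the cone form of the whole potential, not just the height data) at the price of outsourcing the hard step to \cite{bk13,DTVW}, which you yourself flag as the main obstacle; the paper's route needs only the Clifford base case plus the wall-crossing behaviour under Markov mutation and therefore stays within the \cite{chw}-type technology. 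If you want your version to be self-contained relative to this paper, you would need to actually prove that the two-block form survives each Biran lift (or weaken it to the height statement and prove that), which is precisely what the Markov-tree induction is designed to avoid.
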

\begin{proof}
    
From Theorem \ref{mainthm}, we see that $\newt(W_{BSP(\overline{T}_\abc)})$ is equal to mutation of $\newt(W_{\overline{T}_\abc})$ with a factor equal to a $n-1$ dimensional face of the $n-$simplex $\newt(W_{\overline{T}_\abc})$ which contains the triangle with affine edges of lengths $a,b,c.$ Since the factor for mutation is a codimension 1 face, the choice of $w$ is fixed up-to a sign. 

Now the rest of the proof follows from an induction on the Markov tree similar to the proof of Proposition 4.6 in \cite{chw}.   We first verify the statement for $\abc = (1,1,1)$ which is just the case of Clifford torus and higher mutation. Assume the statement is true for all $(a,b,c)$ in the Markov tree which are at distance $d$ from $(1,1,1)$. Thus, there is a width vector $w_\abc$ for which $h_{max}=n, h_{min}=-1.$ The induction step then follows from the fact that if $\abc$ and $(a',b',c')$ are Markov mutations, then $W_{\overline{T}_\abc}$ and $W_{\overline{T}_{(a',b',c')}}$ are related by mutating the triangular face corresponding to $T_\abc$. Note that even after such a mutation, the factor corresponding to the width vector $w_\abc$ is an $n-1$-simplex which has a triangular face with affine edges of lengths $a',b',c'.$ This finishes the proof.
\end{proof}

From Proposition \ref{prop:newtstruct} and Theorem \ref{mainthm}, we have the following structure of the Newton polytope of exotic lift of $\overline{T}_\abc$.

\begin{prop}\label{newpolytopes}
    Let $\wt T_\abc$ be an exotic lift of an $(n-1)$-dimensional lifted Vianna torus $\overline{T_\abc}$. Then $\newt(W_{\wt T_\abc})$ is an $n$ simplex with a vertex $v$ and $n-1$ dimensional face $F$ such that
    \begin{enumerate}
        \item All edges of $v$ have affine length $1$
        \item $F$ is equal to (up to affine equivalence) the Minkowski sum $n.\newt(W_{\Lambda^{n-1}_\abc})$. Here the notation $n.H$ refers to the iterated Minkowski sum $\underbrace{H + H + \dots}_{n \text{ times }}$
    \end{enumerate}
\end{prop}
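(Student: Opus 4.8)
The strategy is to combine the wall-crossing formula of Theorem \ref{mainthm} with the structural result on Newton polytopes from Proposition \ref{prop:newtstruct}, and then to read off the two claimed properties of the $n$-simplex $\newt(W_{\wt T_\abc})$ directly from the combinatorial mutation formula in \S \ref{subsec:geomrel}.

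First I would set up the notation: write $P = \newt(W_{\overline{T}_\abc})$, which by Proposition \ref{prop:newtstruct} is an $n$-simplex, $N = \newt(W_{\Lambda^{n-1}_\abc})$, and $w$ the width vector of Proposition \ref{prop:newtstruct} with $h_{max} = n$, $h_{min} = -1$. By Theorem \ref{mainthm} and the dictionary of \S \ref{subsec:geomrel}, $\newt(W_{\wt T_\abc}) = \text{mut}_w(P, N)$. Now recall the combinatorial mutation formula
$$\text{mut}_w(P,N) = \text{conv}\left(\bigcup_{h=-1}^{-1} G_h \cup \bigcup_{h=0}^{n}\left(P_h + hN\right)\right).$$
Since $h_{min} = -1$, the only negative slice involved is $P_{-1}$, and since $W_\Lambda^{1}$ divides $C_{-1}$ (the $z^{-1}$-coefficient), we may take $G_{-1} = P_{-1} - N$, a single lattice point (because $P_{-1}$ is a vertex of the simplex $P$: the $z^{-1}$-term of $W_{\overline{T}_\abc}$ is a single monomial, as one sees for instance in the $(1,1,2)$ example $W_{\overline{T}^3_{(1,1,2)}} = (y+(1+x)^2)/z + z^3/(xy^2)$). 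This already produces one vertex $v$ of $\text{mut}_w(P,N)$, namely the image of $G_{-1}$.

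Next I would analyze the positive slices. By Proposition \ref{prop:newtstruct}, $P \cap \{w = h_{max}\} = P \cap \{w = n\}$ is a single point; call it $q$. Thus $P_n = \{q\}$ and the top slice of the mutation is $P_n + nN = q + nN$, an affine translate of the $n$-fold Minkowski sum $n.N$. This is the candidate for the face $F$. For $0 \le h < n$, the slice $P_h + hN$ of the mutation lies strictly below $F$ with respect to $w$ (height $h < n$), and $G_{-1}$ lies at height $-1$; hence $F = q + nN$ is genuinely a face of the convex hull, supported by the functional $w$. For the vertex count and the claim that $\text{mut}_w(P,N)$ is again an $n$-simplex: since $P$ is a simplex with $n+1$ vertices, one vertex at height $-1$ (giving $G_{-1}$, hence the vertex $v$) and — because $P$ is a simplex whose top slice is a point — the remaining $n$ vertices distributed among heights $0, \dots, n$; after mutation these $n$ vertices get spread out into the translated face $F = q + nN$, which itself is affinely an $(n-1)$-simplex's $n$-fold Minkowski sum but need not be a simplex. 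I would argue that $\text{mut}_w(P,N)$ has exactly $v$ together with the vertices of $F$ as its vertex set, and that $\text{conv}(\{v\} \cup F)$ is a "pyramid" over $F$ with apex $v$; combined with the fact that all edges from $v$ have affine length $1$ (which follows because the edges of $P$ from its height-$(-1)$ vertex have affine length $1$ — $P$ being a simplex with primitive vertices and the Fano/monotone normalization — and mutation along $w$ fixes the negative part up to the translation by $G_{-1}$), this gives property (1). Property (2) is precisely the identification $F = q + nN$ established above, i.e. $F$ is affinely $n.\newt(W_{\Lambda^{n-1}_\abc})$.

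The main obstacle I anticipate is the bookkeeping in the positive slices: one must check carefully that $P_h + hN$ for $0 \le h < n$ contributes nothing to the vertex set of $\text{mut}_w(P,N)$ beyond what is already in $F$ — that is, that every such slice lies in the convex hull of $\{v\} \cup F$. This requires knowing the precise shape of the intermediate slices $P_h$ of the simplex $P$ and using that $P_h + hN$ interpolates linearly (as a Minkowski-weighted family) between $G_{-1}+N$ at $h=0$-ish behavior and $q + nN$ at $h = n$; concretely, I would use that for a simplex $P$ with base the height-$n$ vertex $q$ and opposite facet at height $-1$, each slice $P_h$ is an affine-rational shrinking of that opposite facet, and that the divisibility $W_\Lambda^{-h} \mid C_h$ (Theorem \ref{mainthm}) forces $P_h \supseteq (\text{something}) + |h|N$ with the complement collapsing to a point in the simplex case. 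Making this collapse precise — essentially showing that in the simplex case the mutation is a genuine pyramid, not something with extra vertices — is where the real work lies; everything else is a direct substitution into the definitions.
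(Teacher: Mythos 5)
Your overall route is the same as the paper's: Proposition \ref{newpolytopes} is stated there as a direct consequence of Proposition \ref{prop:newtstruct} and Theorem \ref{mainthm}, with no further argument written out, and that is exactly the derivation you are trying to flesh out. However, as written your fleshing-out contains genuine errors and leaves the decisive step unproved, so it does not yet constitute a proof.

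Concretely: (i) you have the geometry of $P=\newt(W_{\overline{T}_\abc})$ upside down in the key passage. The $z^{-1}$-coefficient of $W_{\overline{T}_\abc}$ is not a single monomial and $P_{-1}$ is not a vertex; in the paper's own example $W_{\overline{T}^3_{(1,1,2)}}=(y+(1+x)^2)/z+z^3/(xy^2)$ the $z^{-1}$-coefficient is $W_\Lambda=y+(1+x)^2$, so $P_{-1}$ is the entire $(n-1)$-dimensional bottom facet, a translate of $N=\newt(W_\Lambda)$, while the single vertex $q$ of $P$ sits at height $n$. Thus $P$ has $n$ vertices at height $-1$ and one at height $n$, not ``one vertex at height $-1$'' with the rest ``distributed among heights $0,\dots,n$''. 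Relatedly, that $G_{-1}$ may be taken to be a single point does not follow from the divisibility $W_\Lambda\mid C_{-1}$ alone (that only yields some $G_{-1}$ with $G_{-1}+N\subset P_{-1}$); it requires $C_{-1}$ to be a monomial times $W_\Lambda$, i.e.\ $P_{-1}$ to be exactly a translate of the factor, which is what Proposition \ref{prop:newtstruct} supplies. (ii) The step you yourself flag as ``where the real work lies'' is never carried out, and it is the whole content of the statement; it is in fact a short slice computation: writing the bottom facet as $g+N$, the height-$h$ slice of $P$ is $\frac{n-h}{n+1}(g+N)+\frac{h+1}{n+1}q$, so $P_h+hN=\frac{n-h}{n+1}g+\frac{h+1}{n+1}q+\frac{n(h+1)}{n+1}N$, which is precisely the height-$h$ slice of $\mathrm{conv}\bigl(\{(g,-1)\}\cup (q+nN)\times\{n\}\bigr)$; hence the mutation equals that simplex (with lattice-point slices the containment still holds and the hull is unchanged, since the apex and the full top facet appear among the pieces). (iii) Your justification of item (1) does not address the relevant edges: the edges at $v$ join $(g,-1)$ to the new points $(q+n\nu,n)$, $\nu\in\mathrm{vert}(N)$, and these are not edges of $P$, so ``mutation fixes the negative part'' gives nothing. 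What does work is that $(q+n\nu-g,\,n+1)$ differs from the edge vector $(q-g-\nu,\,n+1)$ of $P$ by $((n+1)\nu,0)$ and therefore has the same gcd; so primitivity of the edges at $v$ reduces to primitivity of the apex-to-facet edges of $P$, a fact about $\newt(W_{\overline{T}_\abc})$ (its cone structure, cf.\ \cite{chw} and the induction in Proposition \ref{prop:newtstruct}) that you must invoke explicitly rather than gesture at.
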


\begin{proof}[Proof of Theorem \ref{morevianna}]
    From Proposition \ref{newpolytopes} we can compute the affine lengths of the edges in the Newton polytopes of $\wt T_\abc$. The lengths are listed below :
    \begin{itemize}
        \item a triangular face in $F$  of sides with affine lengths $na,nb$ and $nc$,
        \item all other edges in $F$ have affine length $n$,
        \item all edges from $v$ have affine length $1$.
    \end{itemize}
    From this description and the fact that the $Gl_n(\bZ)$ equivalence class of the Newton polyotope of disk-potential is a symplectomorphism invariant, we have that the set $$ \bigcup_{(a,b,c) \text{ is a  Markov triple}}\{ \overline{T}_\abc, BSP(\overline{T}_\abc) \}$$ consists of pairwise distinct (up to symplectomorphism) monotone Lagrangian tori in $\bP^n$. 
\end{proof}

\begin{remark}[Recipe to get $2^n$ lifts in $\bP^{2+n}$ starting from a single monotone Lagrangian in $\bP^2$.]
    Starting from a monotone Lagrangian in a symplectic divisor, there are two possible lifts - the Biran lift and its corresponding BSP surgery. Thus, there are $2^n$ choices of ways to (iterated) lift a monotone Lagrangian from $\bP^2$ to $\bP^{2+n}$. This potentially opens up $2^{n+2}$ distinct lifts of Vianna tori, but this would need a careful combinatorial analysis which we do not pursue in this article.
\end{remark}

\subsection{Conical Surgeries} \label{sub:conicalsurg}

We develop a generalization of Lagrangian disk surgery in this subsection. A conical Lagrangian in an $2n$-dimensional symplectic manifold is an embedding of a cone over an $n-1$-dimensional manifold which is a smooth Lagrangian embedding away from the conical point. We define a notion of a conical mutation configuration to be a  pair of Lagrangians $(L,C)$, where $L$ is a Lagrangian manifold and $C$ is a Lagrangian cone over a Bohr-Sommerfeld Legendrian $\Lambda$, which cleanly intersects $L$. We prove a Weinstein-neighborhood type theorem for such conical mutation configurations and show that we can perform a zero-area-BSP surgery.

For the rest of the subsection,   we will assume $\Lambda$ is a connected Bohr-Sommerfeld lift of a monotone Lagrangian $L_{\bP^n}$ in $\bC P^n$. Thus, $\Lambda$ is a Legendrian in the standard contact sphere $S^{2n+1}$. We define the \textit{standard cone over} $\Lambda$ to be the natural Lagrangian cone filling $C_s(\Lambda)$ of $\Lambda$ in the standard symplectic ball $B_r(0)$ (of any radius) viewed as a filling of $S^{2n+1}$. Recall that the topological cone of a manifold $K$ is the quotient space $$C(K) = K \times [0,1]/ \{ K \times 0\}.$$ The space $C(K)$ is a smooth manifold with boundary away from the \textit{conical point } $p:=[ K \times \{0\}]$.

\begin{defn}[Conical Lagrangian]
    A conical Lagrangian over the Bohr-Sommerfeld $\Lambda$ in the symplectic manifold $M$ is a topological embedding $\phi$ from the topological cone $C(\Lambda)$ to $M$ such that 
    \begin{enumerate}
        \item  $\phi: C(\Lambda) \to M$ is a smooth Lagrangian embedding away from the conical point $p$.
        \item  there is a Darboux embedding of the standard symplectic ball, $F: B_r(0) \to M $ such that $F(0) = p$ and $F\inv (\phi (C(\Lambda)) ) = C_s(\Lambda)$, i.e. near the conical point, the embedding $\phi$ is equivalent to an embedding of the standard cone over $\Lambda$ by a Darboux embedding.
    \end{enumerate}
\end{defn}

\begin{defn}[Conical Mutation Configuration]
    We define the pair $(L,C)$ to be a conical mutation configuration if \begin{enumerate}
        \item $L$ is a smooth Lagrangian 
        \item $C$ is a conical Lagrangian over $\Lambda$
        \item $L$ and $C$ only intersect at the boundary of the conical Lagrangian, $C$ and the intersection is clean.
    \end{enumerate}
\end{defn}

\begin{example}\label{modelcone}
    
A model example of a conical mutation configuration is $ (\wt L, C_s(\Lambda))$ in $\bC^{n+1}$ where $\wt L$ is the Biran lift of the Lagrangian $L_{\bP^n}$ and $C_s(\Lambda)$ is the standard cone over the Bohr-Sommerfeld lift of $L_{\bP^n}$.  See Figure \ref{conicalfig} for a diagram of intersection of $(L_{\bP^n},C_s(\Lambda))$ with a complex plane passing through the origin and intersecting $L_{\bP^n}\cap C_s(\Lambda).$
\end{example}

We will now prove a Weinstein-neighborhood type result, which proves that locally all mutation configurations depend solely on the Legendrian $\Lambda$. See \cite{karshon2025weinstein} for a very robust version of the Weinstein neighborhood theorem for stratified Lagrangians.

\begin{lemma}
    Given two conical mutation configurations $(L_i,C_i)$ where $C_i$'s are modelled over the same Bohr-Sommerfeld lift $\Lambda$ in two symplectic manifolds $M_i$ for $i= 1,2$, there is a pair of open neighborhoods $N_i$ of $C_i$ and a symplectomorphism $\Phi: N_1 \to N_2 $ such that $\Phi(C_1) = \Phi(C_2)$ and $\Phi(L_1 \cap N_1)  = L_2\cap N_2$.
\end{lemma}

\begin{proof}
    The proof is an usual application of Moser's trick after choosing diffeomorphisms which pullback the symplectic form $\omega_2$ to $\omega_1$ on a neighborhood of $C_1$. Note that since clean intersections are modelled by co-normal bundles, (see \cite[\S 3.2]{ciellat}) we can always extend the cones $C_i$ so that the intersection with $L_i$ happen in the interior of $C_i$. From the definition of conical Lagrangian, we can choose two symplectic embeddings $\phi_i$ of $B_r(0)$ (after potentially shrinking one of the balls) to $M_i$ which take the standard cone to the conical Lagrangian $C_i$. Thus, $\phi_2 \of \phi_1\inv$ is a symplectomorphism between neighborhoods  $U_i$ of the conical points of $C_1$ and $C_2$ which maps $C_1 \cap U_1 $ to $C_2 \cap U_2$. For clarity, let us identify $C_i \cap U_i$ with $\Lambda\times (0,\frac{1}{2})$. Note that there are two Weinstein neighborhoods $W_i$ of $C_i \cap U_i$. That is, there are symplectic embeddings $\cW_i: W_i \to T^*(\Lambda \times (0,\frac{1}{2}))$ which map $C_i\cap U_i$ to the zero section of the cotangent bundle. The map $\phi_2 \of \phi_1\inv$ induces a symplectomorphism of a neighborhood of the zero-section $\Lambda\times (0,\frac{1}{2})$, thus a symplectic automorphism $S$ of the tangent  bundle $T(T^*(\Lambda\times (0,\frac{1}{2})))|_{\Lambda\times (0,\frac{1}{2})}.$ Using an increasing  bijective smooth function $f:(0,\frac{1}{2})\to (0,1)$ which is equal to identity on $(0,1/2 - \varepsilon)$, we can extend the symplectomorphism $S$ to a symplectic automorphism $S_f$ of the tangent bundle $T(T^*(\Lambda\times (0,1)))$. Using the extension, $S_f$ we can construct a diffeomorphism $\varphi$ from a neighborhood of $C_1$ to $C_2$ such that the pullback $\varphi^*\omega_2$ agrees with $\omega_1$ over $C_1$ and agrees with the symplectomorphism induced from $\phi_2 \of \phi_1\inv $ over a shrunk ball of the conical point. We can now run a Moser isotopy to get a symplectomorphism. Note that the Moser's isotopy will be constant in a neighborhood of the conical point, as the pullback symplectic form $\varphi^*\omega_2$ agrees with $\omega_1$ near the conical point. Call this symplectomorphism $\wt\Phi$. Now, since $L_i$ and $C_i$ cleanly intersect, we can use a Hamiltonian isotopy argument similar to Lemma 3.1 in \cite{ciellat} to construct a symplectomorphism $\Phi$ satisfying the required properties.
\end{proof}

\begin{figure}
    \centering

        \def\svgscale{1}
\begingroup%
  \makeatletter%
  \providecommand\color[2][]{%
    \errmessage{(Inkscape) Color is used for the text in Inkscape, but the package 'color.sty' is not loaded}%
    \renewcommand\color[2][]{}%
  }%
  \providecommand\transparent[1]{%
    \errmessage{(Inkscape) Transparency is used (non-zero) for the text in Inkscape, but the package 'transparent.sty' is not loaded}%
    \renewcommand\transparent[1]{}%
  }%
  \providecommand\rotatebox[2]{#2}%
  \newcommand*\fsize{\dimexpr\f@size pt\relax}%
  \newcommand*\lineheight[1]{\fontsize{\fsize}{#1\fsize}\selectfont}%
  \ifx\svgwidth\undefined%
    \setlength{\unitlength}{450bp}%
    \ifx\svgscale\undefined%
      \relax%
    \else%
      \setlength{\unitlength}{\unitlength * \real{\svgscale}}%
    \fi%
  \else%
    \setlength{\unitlength}{\svgwidth}%
  \fi%
  \global\let\svgwidth\undefined%
  \global\let\svgscale\undefined%
  \makeatother%
  \begin{picture}(1,0.5)%
    \lineheight{1}%
    \setlength\tabcolsep{0pt}%
    \put(0,0){\includegraphics[width=\unitlength,page=1]{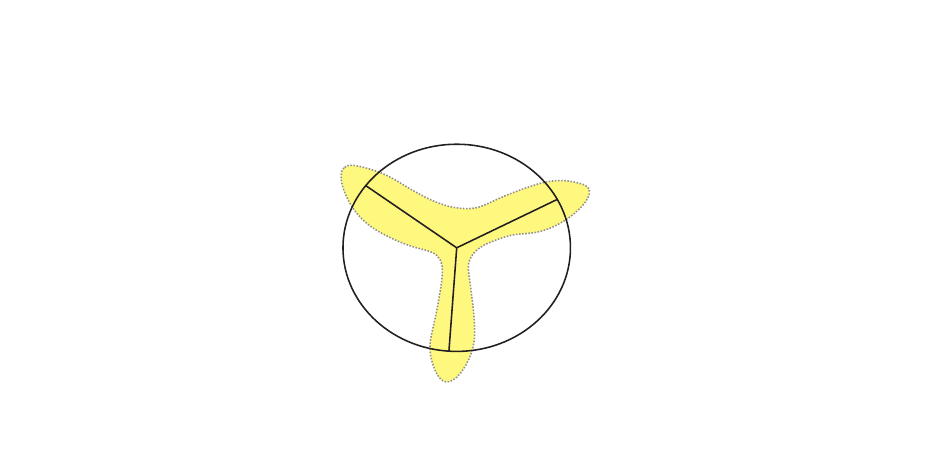}}%
    \put(0.61570633,0.26115239){\makebox(0,0)[lt]{\lineheight{1.25}\smash{\begin{tabular}[t]{l}$\cN$\end{tabular}}}}%
    \put(0.48791824,0.19261152){\makebox(0,0)[lt]{\lineheight{1.25}\smash{\begin{tabular}[t]{l}$C_s(\Lambda)$\end{tabular}}}}%
    \put(0.54623606,0.12081785){\makebox(0,0)[lt]{\lineheight{1.25}\smash{\begin{tabular}[t]{l}$\wt L$\end{tabular}}}}%
  \end{picture}%
\endgroup%

    \caption{Model Conical Mutation Configuration}
    \label{conicalfig}
\end{figure}

As an application of the previous lemma, we have that for any conical mutation configuration $(L,C)$ where $C$ is modelled over a Legendrian $\Lambda$, we have a symplectomorphism $\Phi$ from a neighborhood $\cN$ of the model cone in Example \ref{modelcone} such $\Phi(C_s(\Lambda)) = C$ and $\Phi(\wt L \cap \cN ) = \Phi(\cN) \cap L.$ Since $\cN$ contains a neighborhood of $0$, a direct check shows that after a Hamiltonian isotopy of $L$ we can ensure that $L$ has a BSP Lagrangian handle and allows a zero-area BSP surgery which we call a \textit{conical surgery.} 

\begin{defn}[Conical surgery]
      If $(L,C)$ is a conical mutation configuration,  the conical surgery $L_C$ of $L$ is obtained by performing a zero-area BSP surgery $BSP_\Lambda (L)$.
\end{defn}

\noindent Recall that one Bohr-Sommerfeld lift of the real projective space is the real Legendrian sphere, $S^{n-1}_\bR \subset S^{2n-1}.$ Thus, in this special case, the cone $C$ is a Lagrangian ball which cleanly intersects $L$ at its spherical boundary. The conical surgery of $L$ along $C$ is thus equivalent to the composition of an antisurgery as introduced by \cite{haug} and a Polterovich surgery. The following proposition is a special case of the main theorem \ref{mainthm}.

\begin{prop}
  The effect of conical surgery on disk potential is given by the relation:
  $$ W_{BSP^\Lambda(L)}(x_1,\dots,x_k,z,w_1,\dots,w_l)
 =W_L(x_1,\dots,x_k,zW_{\Lambda}(x_1,\dots,x_k),w_1,\dots,w_l). $$
\end{prop}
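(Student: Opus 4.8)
The plan is to obtain this proposition as a direct specialization of Theorem \ref{mainthm}, so the bulk of the work is in recognizing that a conical mutation configuration produces a Lagrangian to which the main theorem applies. First I would invoke the Weinstein-neighborhood result for conical mutation configurations proved just above: given $(L,C)$ with $C$ modelled on the Bohr-Sommerfeld Legendrian $\Lambda$, there is a symplectomorphism $\Phi$ from a neighborhood $\cN$ of the model cone $C_s(\Lambda)$ in $\bC^{n+1}$ carrying $C_s(\Lambda)$ to $C$ and $\wt L \cap \cN$ to $L \cap \Phi(\cN)$, where $\wt L$ is the Biran lift of $L_{\bP^n}$. Since $\cN$ contains a ball about the origin, the remark following that lemma (together with the coordinate change $r \mapsto 1/r$ on the disk bundle $E_\Sigma$) shows that a Hamiltonian isotopy of $L$, supported near $C$ and hence extended to all of $M$, brings $L$ to a Lagrangian $L'$ carrying a BSP Lagrangian handle modelled on the fiber profile of the Biran lift, with profile $\Lambda$, and such that the associated surgery is zero-area. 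Because the disk potential is invariant under Hamiltonian isotopy and the surgery is performed entirely inside the handle chart, we have $W_{L'} = W_L$ and $W_{BSP^\Lambda(L')} = W_{BSP^\Lambda(L)}$; moreover $BSP^\Lambda(L) = BSP^\Lambda_0(L)$ since a conical surgery is by definition a zero-area BSP surgery.

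Next I would check that $L'$ meets Assumptions \ref{ass:diskpot}. Condition A.1 (minimal Maslov $2$, compact, oriented, spin, monotone) and A.4 (admits a zero-area BSP surgery) are in place by hypothesis and the previous paragraph; A.3 (connectedness of $L' \setminus \phi(\Lambda \times I)$) and A.5 ($\tau$ homotopic to the identity) hold because in the model the complement deformation retracts onto the Biran lift with a handle removed and $\Lambda \to L_{\bP^n}$ is a connected cover, making $\tau$ the identity; A.2 (the prescribed splitting $H_1(L') = i_* H_1(\Lambda \times \{0\}) \oplus \bZ\langle \zeta\rangle \oplus R$) is read off from the structure of $H_1(\wt L)$ as an extension of $H_1(L_{\bP^n})$ by the fiber class, the latter furnishing the $\zeta$ generator dual to $\Lambda \times \{0\}$. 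The one substantive point is A.6, that the base $L_{\bP^n}$ has the ERD property; here I would cite Lemma \ref{setoerd} together with the surjective evaluation property of $L_{\bP^n}$ — for the Vianna-type profiles this is Example \ref{viannsed}, and in general it is part of the standing hypotheses on the base Lagrangian under which the proposition is asserted.

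With Assumptions \ref{ass:diskpot} verified for $L'$, Theorem \ref{mainthm} applies verbatim and gives
\[
W_{BSP_0(L')}(x_1,\dots,x_k,z,w_1,\dots,w_l) = W_{L'}(x_1,\dots,x_k,zW_\Lambda(x_1,\dots,x_k),w_1,\dots,w_l),
\]
and substituting $W_{L'} = W_L$ and $W_{BSP_0(L')} = W_{BSP^\Lambda(L)}$ yields the stated relation. I expect the main obstacle to be the bookkeeping in the second paragraph: matching the explicit local model of the conical configuration with the BSP-handle chart precisely enough that the homology splitting of A.2 — and in particular the choice of $\zeta$, to which the form of the formula is sensitive — is exactly the one used in Theorem \ref{mainthm}, and confirming that the ERD hypothesis genuinely holds for the base Lagrangians of interest. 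Once these identifications are pinned down, the conclusion is a direct quotation of the main theorem, so no new Floer-theoretic input is required.
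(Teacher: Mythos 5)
Your route is the same as the paper's: the paper offers no separate argument beyond observing that the preceding Weinstein-neighborhood lemma and the local identification with the model cone let one Hamiltonian-isotope $L$ to a Lagrangian with a BSP handle admitting a zero-area surgery, after which the proposition is quoted directly as a special case of Theorem \ref{mainthm}, exactly as you do. One small caution: your justification of A.5 (that $\Lambda \to L_{\bP^n}$ being a connected cover makes $\tau$ homotopic to the identity) is too strong in general --- cf.\ the paper's remark on $L_Y = \bR P^{2n}$, where $\tau$ is the antipodal map --- but this is harmless here since the paper simply carries the hypotheses of Theorem \ref{mainthm} as standing assumptions for the proposition.
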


\bibliographystyle{amsplain}
\bibliography{bib}

\Addresses

\end{document}